%% file: main.tex
\documentclass[11pt]{amsart}
\usepackage{amsthm, amssymb, amsmath, amsfonts, amsrefs}
\usepackage{graphicx} 
\usepackage{mathtools}
\usepackage{soul}
\usepackage{ytableau,tikz,varwidth}
\usepackage{latexsym,hyperref, caption,tikz,comment,subcaption}
\usetikzlibrary{calc}
\usepackage{tikz-cd}
\usepackage{enumitem}
\usetikzlibrary{graphs}
\usetikzlibrary{graphs.standard}
\usetikzlibrary{shapes.misc}
\usetikzlibrary {shapes.geometric} 
\usepackage{nicematrix}
\usepackage{blkarray, bigstrut}
\usepackage{forest}
\usetikzlibrary{trees, positioning}
\usetikzlibrary{fit}

\numberwithin{equation}{section}
\tikzset{
  curarrow/.style={
  rounded corners=8pt,
  execute at begin to={every node/.style={fill=red}},
    to path={-- ([xshift=-50pt]\tikztostart.center)
    |- (#1) node[fill=white] {$\scriptstyle \delta$}
    -| ([xshift=50pt]\tikztotarget.center)
    -- (\tikztotarget)}\
    }
}

\tikzcdset{diagrams={arrows={shorten >=-0.5ex,shorten <=-0.5ex}}}

\forestset{
  mytree/.style={
    for tree={
      draw,
      circle,
      minimum size=6mm,
      inner sep=1pt,
      l=1.2cm,
      s sep=8mm,
      edge={-stealth},
      align=center
    }
  }
}

\newtheorem{theorem}{Theorem}[section]
\newtheorem{corollary}[theorem]{Corollary}
\newtheorem{lemma}[theorem]{Lemma}
\newtheorem{proposition}[theorem]{Proposition}

\theoremstyle{definition}

\newtheorem{remark}[theorem]{Remark}

\def\opn#1#2{\def#1{\operatorname{#2}}}
\opn\Cl{Cl} 
\opn\pdim{pdim} 
\opn\Im{Im} 
\opn\Ker{Ker} 
\opn\ini{in} 
\opn\typ{type} 
\opn\Par{par}  
\opn\root{root }  
\opn\son{son}  
\opn\p{par} 
\opn\st{SPT} 
\opn\lapla{$L$}
\opn\rlapla{$\widetilde{L}$}

\newcommand{\abs}[1]{\left\lvert #1 \right\rvert}

\DeclareOldFontCommand{\rm}{\normalfont\rmfamily}{\mathrm}

\newcommand{\N}{\mathbb{N}}

\newcommand{\Knn}{K_{n+1}}
\newcommand{\pr}{\mathrm{Pr}}
\newcommand{\PF}{\mathrm{PF}}
\newcommand{\SPF}{\mathrm{sPF}}
\newcommand{\SPT}{\mathrm{SPT}}
\newcommand{\U}{\mathcal{U}}
\newcommand{\Un}{\mathcal{U}_n}
\newcommand{\UnFl}{\mathcal{U}_n^{1\not\sim F_\ell}}
\newcommand{\Bn}{\mathcal{B}_n}
\newcommand{\Gl}{G_{\ell}}
\newcommand{\Cnl}{\mathcal{C}_{n,\ell}}
\newcommand{\Cnlp}{\mathcal{C}_{n,\ell}^{p}}
\newcommand{\Cnlminusonep}{\mathcal{C}_{n,\ell-1}^{p}}
\newcommand{\Cnpplustwop}{\mathcal{C}_{n,p+2}^{p}}
\newcommand{\Cnllminustwo}{\mathcal{C}_{n,\ell}^{\ell-2}}
\newcommand{\Pmax}{\mathrm{P}_{\max}^{\ell}}
\newcommand{\Pmaximum}{\mathrm{P}_{\max}}
\newcommand{\F}{\mathcal{F}}
\newcommand{\Tnl}{\mathcal{T}_{n,\ell}}
\newcommand{\Tnlminusonep}{\mathcal{T}_{n,\ell-1}^{p}}
\newcommand{\Tnpplustwop}{\mathcal{T}_{n,p+2}^{p}}
\newcommand{\TildeTnpplustwop}{\widetilde{\mathcal{T}}_{n,p+2}^{p}}
\newcommand{\hatCnl}{\widehat{\mathcal{C}}_{n,\ell-1}^{p}}
\newcommand{\hatCnp}{\widehat{\mathcal{C}}_{n,p+2}^{p}}

\DeclareOldFontCommand{\rm}{\normalfont\rmfamily}{\mathrm}

\begin{document}

\title[Enumeration of uprooted trees and spherical parking functions]{Enumeration of certain subsets of uprooted trees and spherical parking functions}
\author{Nayana Shibu Deepthi}
\address{Department of Mathematical Sciences, IISER Mohali, Knowledge City, Sector 81, SAS Nagar, Punjab, 140-306, India}
\email{nayanashibu@iisermohali.ac.in}
\author{Chanchal Kumar}
\address{Department of Mathematical Sciences, IISER Mohali, Knowledge City, Sector 81, SAS Nagar, Punjab, 140-306, India}
\email{chanchal@iisermohali.ac.in}
\author{Gargi Lather}
\address{Department of Mathematics, Chennai Mathematical Institute, Chennai, Tamil Nadu, 603-103, India} 
\email{gargilather@gmail.com}

\keywords{Spherical parking functions, uprooted trees, skeleton ideals, matrix tree theorem}
\subjclass[2020]{Primary: 05C30, 05C05, 05C50; Secondary: 05E40}

\begin{abstract}
Spherical $G$-parking functions are a distinguished subset of standard monomials, arising from the skeleton ideals of the $G$-parking function ideal. Explicit enumeration formulas for spherical $G$-parking functions are known only for a few classes of graphs. In this paper, we consider a family of graphs $G_{\ell}$ ($1\leq \ell \leq n-2$), obtained from the complete graph $K_{n+1}$ by deleting the $\ell$ edges joining vertex $1$ to the vertices in $F_\ell= \{n-\ell+1, \ldots, n\}$. The uprooted spanning trees of $G_{\ell}-\{0\}$ correspond to the set $\mathcal{U}_n^{1\not\sim F_\ell}$ of uprooted trees with vertex set $[n]$ in which vertex $1$ is not adjacent to any vertex in $F_\ell$, and we establish that $|\mathcal{U}_n^{1\not\sim F_\ell}| = (n-1)^{n-\ell-2}(n-2)^{\ell}(n-\ell-1)$. We derive this formula combinatorially and independently recover it as an application of the matrix tree theorem, obtaining some combinatorial identities as consequences. Finally, we determine the number of spherical $G_{\ell}$-parking functions as $|\mathrm{sPF}(G_{\ell})| = (n-1)^{n-3}(n-\ell-1)^2$.
\end{abstract}

\maketitle

\input{sec01}

\input{sec02}

\input{sec03}
\input{sec04}

\input{sec05}


\section*{Acknowledgments}
The first author expresses gratitude for the financial assistance received under the Institute Postdoctoral Fellowship offered by the Indian Institute of Science Education and Research (IISER) Mohali.
The third author gratefully acknowledges Chennai Mathematical Institute for the postdoctoral fellowship and the Infosys Foundation for partial financial support.


\bibliography{main}

\end{document}

%% file: sec01.tex
\section{Introduction}\label{sec:intro}

Parking functions, introduced by Konheim and Weiss~\cite{KW66} in the context of computer storage allocation, have since become central objects in enumerative and algebraic combinatorics.
Recall that a sequence $(p_1,\ldots,p_n)\in\N^n$ is a (\emph{classical}) \emph{parking function of length $n$} if its non-decreasing rearrangement $p_{i_1}\le\cdots\le p_{i_n}$ satisfies $p_{i_j}<j$ for all $1\leq j\leq n$.
The number of parking functions of length $n$ is $(n+1)^{n-1}$. 
A clever proof of this formula, based on a circular street argument, was given by Pollak \cite{Rio69}. 
By Cayley's formula, $(n+1)^{n-1}$ is also the number of labeled trees with $n+1$ vertices. 
This equality in cardinality led to many bijective constructions.
Kreweras~\cite{Kre80} gave a recursively defined bijection, and Stanley~\cite{Sta96} formulated the problem of finding a non-recursive bijective proof. 
Further, several solutions have been found, one of which can be found in~\cite{GuiVer11}.

Parking functions have many generalizations. 
An interesting generalization was given by Postnikov and Shapiro~\cite{PS04} in 2004.
Let $G$ be a connected graph with vertex set $V(G)=\{0,1,\ldots,n\}$ and designated root $0$.
Let $A(G)=[a_{ij}]_{0\leq i,j\leq n}$ denote the adjacency matrix of $G$, where $a_{ij}$ denote the number of edges between vertices $i$ and $j$.
We write $[n]=\{1,\ldots,n\}$ for the set of non-root vertices, and define $d_A(i)=\sum_{j\in V(G)\setminus A}a_{ij}$, to be the number of edges from $i$ to vertices outside $A$, for $i\in A\subseteq[n]$. 
A function $\mathcal{P}\colon[n]\to\N$ is called a \emph{$G$-parking function} (with respect to root $0$) if, for every non-empty subset $A\subseteq[n]$, there exists $i\in A$ such that $\mathcal{P}(i)<d_A(i)$.  
The set of all $G$-parking functions is denoted $\PF(G)$.
Further, Postnikov and Shapiro~\cite{PS04} associated the \emph{$G$-parking function ideal}
\[
  \mathcal{M}_G \;=\; \biggl\langle m_A=\prod_{i\in A}x_i^{d_A(i)}
             \;:\;\emptyset\ne A\subseteq[n]\biggr\rangle
  \;\subseteq\;R=\mathbb{K}[x_1,\ldots,x_n],
\]
to the graph $G$ and showed that its standard monomials correspond bijectively to the $G$-parking functions.
When $G$ is the complete graph $K_{n+1}$, the $G$-parking functions reduce precisely to the classical parking functions of length $n$.

Chebikin and Pylyavskyy~\cite{CP05} constructed bijections between $G$-parking functions and spanning trees of $G$. 
Later, Perkinson, Yang, and Yu~\cite{PYY17} introduced a \emph{Depth-First-Search} variant of Dhar's burning algorithm~\cite{Dhar90} to produce an explicit bijection $\phi\colon\PF(G)\to\SPT(G)$ between $G$-parking functions and the set of all spanning trees $\SPT(G)$ of $G$.
Gaydarov and Hopkins~\cite{GH16} extended this bijection to multigraphs.

Motivated by the combinatorics of chip-firing on graphs, Dochtermann~\cite{Doc17} introduced the following subideals of $\mathcal{M}_G $.
For $0\le k\le n-1$, the \emph{$k$-skeleton ideal} of $G$ is
\[
  \mathcal{M}_{G}^{(k)} \;=\; \bigl\langle m_A\;:\;\emptyset\ne A\subseteq[n],\;
             \abs{A}\le k+1\bigr\rangle \;\subseteq\;\mathcal{M}_G .
\]
The standard monomials of $\mathcal{M}_{G}^{(k)}$ form a family of sets that changes with the value of $k$. 
In particular, $\mathcal{M}_{G}^{(n-1)}=\mathcal{M}_G $, and the standard monomials coincide with the $G$-parking functions. 
The filtration $\mathcal{M}_G^{(n-2)}\subset \mathcal{M}_G$ naturally determines the class of \emph{spherical $G$-parking functions} introduced in~\cite{Doc17}. 
A function $\mathcal{P}\colon [n]\to \N$ is called a \emph{spherical $G$-parking function} if
\[
\mathbf{x}^\mathcal{P}=\prod_{i\in [n]} x_i^{\mathcal{P}(i)}\in \mathcal{M}_G \setminus \mathcal{M}_G^{(n-2)}.
\]
We denote the set of spherical $G$-parking functions by $\SPF(G)$.
For further studies on skeleton ideals and skeletal generalizations of parking functions, see~\cites{KLR22, BCLMOW25}.
While the enumeration of spherical $G$-parking functions is known for certain families of graphs, explicit formulas are available only in a few cases.

Throughout this paper, unless stated otherwise, we consider only finite simple vertex-labeled graphs $G=(V(G),E(G))$, with vertex set $V(G)=\{0\}\cup [n]$ and designated root $0$.
If there exists an edge between $u$ and $v$ in $G$, then we say that $u$ and $v$ are \emph{adjacent} to each other and write $u\sim v$, and we denote this edge by $e_{u,v}$. 
If there is no edge between them, then we say that $u$ and $v$ are \emph{not adjacent} to each other and write $u\nsim v$.
Let $G-\{v\}$ denote the graph obtained from $G$ by deleting the vertex $v\in V(G)$ along with all edges incident to it.
For $E'\subseteq E(G)$, let $G-E'$ denote the graph obtained from $G$ by deleting all edges in $E'$.
In particular, we write $G-\{e_{u,v}\}$ when $E'=\{e_{u,v}\}$ is a single edge.
A \emph{rooted tree} is a tree with a distinguished vertex called the \emph{root}, where each edge is directed away from the root, defining a natural orientation for the graph. 
For a rooted tree $T$, we use $\son_T(v)$ and $\Par_T(u)$ to denote the set of sons of a vertex $v \in V(T)$ and the parent of a non-root vertex $u \in V(T)$ in $T$, respectively. 
For any undefined terms 
and notations in this article, see \cites{Aigner, Bapat}.

An \emph{uprooted} (\emph{spanning}) \emph{tree} is a rooted (spanning) tree in which the root is strictly larger than all of its children.
We denote the set of all uprooted spanning trees of a graph $G$ by $\U(G)$.
In particular, let $\Un$ denote the set of all uprooted trees with vertex set $[n]$.
It is known that $|\Un|=(n-1)^{n-1}$ (see~\cites{CDG, DK25}).
Dochtermann conjectured the existence of a natural bijection between $\SPF(\Knn)$ and $\Un$.
This was proved by Kumar, Lather, and Sonica~\cite{CGS} using a modified Depth-First-Search burning algorithm, and independently by Dochtermann and King~\cite{DK21} (using a Breadth--First--Search variant of Dhar's burning algorithm~\cite{Dhar90}).
Thus, $|\SPF(\Knn)|=|\Un|=(n-1)^{n-1}$.
Moreover, adding or deleting edges between the root $0$ and the other vertices of a graph does not affect the number of spherical parking functions~\cite{CGS}.
Since vertex $1$ is the smallest non-root vertex, it is natural to study how deleting edges incident to vertex $1$ affects the enumeration of spherical parking functions.
Let $\Un^{1\nsim n}$ denote the set of all uprooted trees with vertex set $[n]$ in which vertices $1$ and $n$ are not adjacent.
The cardinality of $\Un^{1\nsim n}$ is given by $|\Un^{1\nsim n}| =(n-1)^{n-3}~(n-2)^2$ (see~\cites{CGS,DK25}). 
For graph $G=\Knn-\{e_{1,n}\}$, Kumar et al.~\cite{CGS} constructed a bijection between $\SPF(G)$ and $\Un^{1\nsim n}$, thereby enumerating $\SPF(G)$.
In this paper, we continue this line of investigation.
We consider graphs obtained from $K_{n+1}$ by deleting additional sets of edges, and derive explicit formulas for $|\SPF(G)|$ for these families.

The paper is organized as follows. 
In Section~\ref{sec:Counting Uprooted Trees}, we introduce the graph $\Gl$ and, using the bijection of Chauve, Dulucq, and Guibert~\cite{CDG} between $\mathcal{U}_{n}$ and a certain subset of labeled rooted trees with vertex set $[n]$, we enumerate the uprooted spanning trees of $\Gl-\{0\}$.
In Section~\ref{sec:Enumeration by MTT}, we recover this enumeration as an application of the matrix tree theorem and obtain several combinatorial identities as consequences. 
In Section~\ref{sec:SPF(Gl)}, we recall the necessary background on spherical $\Gl$-parking functions and outline our approach to their enumeration. 
Finally, in Section~\ref{sec:Counting SPF(Gl)}, we determine $|\SPF(\Gl)|$.

%% file: sec02.tex
\section{\texorpdfstring{Enumeration of uprooted trees in $\UnFl$}{Counting Uprooted Trees Using C-D-G}}\label{sec:Counting Uprooted Trees}

Our aim is to study the spherical parking functions of a family of graphs obtained by deleting certain edges from $\Knn$.
For integers $n\ge3$ and $1\le\ell\le n-2$, we define 
\[
  F_\ell = \{n-\ell+1,\,n-\ell+2, \ldots, n\} \;\subseteq\;[n],
\]
and let $\Gl$ be the graph with vertex set $[n]\cup\{0\}$ and root $0$, obtained from $\Knn$ by removing the $\ell$ edges joining vertex $1$ to each vertex in $F_\ell$. That is, 
\[ 
\Gl = K_{n+1}-\{ e_{1,v} \ \colon v\in F_{\ell}\}.
\]
We write $\Gl'=\Gl-\{0\}$ for the induced subgraph of $\Gl$ with vertex set $[n]$.
Let $\Un$ denote the set of all uprooted trees with vertex set $[n]$, and for $n\geq 3$ and $1 \leq \ell \leq n-2$, we define the subset
\[
\UnFl = \{T\in \Un\ \colon 1\nsim v \text{ for any } v\in F_{\ell}\}\subseteq \Un.
\]
The set of uprooted spanning trees of $\Gl'$ is precisely $\U(\Gl')= \UnFl$.

One of the key results of this section is a closed formula for the cardinality of $\UnFl$.

\begin{theorem}\label{thm:|Un|}
For $n\geq 3$ and $1\leq \ell \leq n-2$, 
\[
|\UnFl| = (n-1)^{n-\ell -2}~(n-2)^{\ell}~(n-\ell-1).
\]    
\end{theorem}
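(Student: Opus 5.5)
Uprooted trees on $[n]$ are counted via the bijection of Chauve, Dulucq and Guibert; here is a direct counting argument that I expect to translate into that setting. The starting point is a weighted count of uprooted trees: root a tree at a vertex $r$ and require every child $c$ of $r$ to satisfy $c<r$. Equivalently, $T$ is uprooted if and only if the root is larger than all its neighbours. The plan is a forest-type enumeration with weights on edges, followed by specialization.

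\textbf{Step 1: generating function for trees with prescribed neighbourhood of $1$.} I would first count unrooted labeled trees on $[n]$ in which vertex $1$ has a prescribed neighbour set $S$, and then impose the uprooted condition on the root. Removing vertex $1$ from a tree with $N(1)=S$, $|S|=k$, leaves a spanning forest of $[n]\setminus\{1\}$ with $k$ components, each containing exactly one vertex of $S$. By the generalized Cayley formula there are $k(n-1)^{n-1-k-1}$ such forests. This count depends only on $k$. So the number of labeled trees with $N(1)\cap F_\ell=\emptyset$ is
\[
\sum_k \binom{n-1-\ell}{k}k(n-1)^{n-k-2}.
\]
This follows the standard pattern.

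\textbf{Step 2: incorporate the root.} An uprooted tree is a pair $(T,r)$ with $N_T(r)\subseteq\{1,\dots,r-1\}$. The key device I would use is symmetric. For fixed root $r$, count trees in which all neighbours of $r$ are smaller than $r$ and $1\nsim F_\ell$. Removing $r$ gives a forest of $[n]\setminus\{r\}$ rooted at $N(r)\subseteq[r-1]$. Counting rooted forests of $K_{n-1}$ with roots in a set $A$ of size $a$ gives $a(n-1)^{n-2-a}$, which sums over subsets to $(r-1)(n)^{\cdots}$-type expressions. I would then add the constraint that $1$ is not adjacent to $F_\ell$. Cases split according to whether $r=1$ (impossible unless $n=1$, since $1$ has no smaller neighbour), $r\in F_\ell$, or $r\notin F_\ell\cup\{1\}$. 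If $r\in F_\ell$, vertex $1$ cannot be a root of the forest. Within the forest, $1$ must avoid $F_\ell\setminus\{r\}$ as neighbours.

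\textbf{Step 3: forests avoiding edges from $1$ to $F_\ell$.} This is the main obstacle. I would handle it by the matrix-forest theorem applied to the graph $K_{n-1}$ minus a star at $1$. Alternatively, use the Prüfer-type argument: in the Prüfer-like code of a rooted forest, vertex $1$'s neighbours are the code entries at positions where $1$ is deleted, together with its parent. One then requires $1$'s parent and children to lie outside $F_\ell$. Concretely, I would attach all forest roots to a new vertex $\star$ and count spanning trees of the complete graph on $\{\star\}\cup([n]\setminus\{r\})$, with $\star$ joined only to $A$ and $1$ not joined to $F_\ell$. The Laplacian of this graph has a block structure: complete graph, minus a star, plus a modified root row. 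Its determinant factors into eigenvalue products, giving $(n-1)^{\cdot}(n-2)^{\cdot}$. The factor $(n-2)^\ell$ arises from the $\ell$ vertices of $F_\ell$, whose degree drops by one, while $n-\ell-1$ comes from vertex $1$'s degree.

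Finally, I would sum over $r$ and $A$. The resulting sums should telescope to $(n-1)^{n-\ell-2}(n-2)^\ell(n-\ell-1)$. As checks, I would verify that $\ell=0$ gives $(n-1)^{n-1}$ and that $\ell=1$ gives $(n-1)^{n-3}(n-2)^2$.
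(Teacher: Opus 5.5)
Once cleaned up, your Steps~2--3 are the paper's second proof (Section~\ref{sec:Enumeration by MTT}). The auxiliary vertex $\star$, joined to all admissible smaller vertices, is just the root $r=n-k$ itself. So you are counting, one root at a time, spanning trees of $G_\ell-\{0\}$ with the edges from $r$ to larger vertices deleted, and then summing over $r$. As written, the bookkeeping is inconsistent: spanning trees of the graph in which $\star$ is joined to $A$ count forests whose root set is \emph{any} nonempty subset of $A$, so you must not also sum over $A$. Step~1 is never used, since it ignores the root condition.

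The genuine gap is that the two steps carrying all the content are asserted rather than computed. First, for a fixed root the reduced Laplacian is $nI-J$ perturbed by two stars (at $1$ and at $r$), and its determinant does \emph{not} factor into powers of $n-1$ and $n-2$. For $r=n$ it equals $n^{n-\ell-2}(n-1)^{\ell-1}(n-\ell-1)$, which already has powers of $n$. For $r=n-k\in\{2,\dots,n-\ell\}$ the paper needs a Schur-complement computation to obtain
\[
n^{n-k-2}(n-1)^{k-\ell}(n-2)^{\ell-1}\Bigl((n-k-\ell-2)+\frac{k-\ell}{n-1}+\frac{\ell(k+2)}{n}\Bigr),
\]
which is not a product of such factors at all; for $n=5$, $\ell=1$, $r=3$ it equals $21$. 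So your heuristic (``$(n-2)^\ell$ from the degree drop on $F_\ell$, $n-\ell-1$ from the degree of $1$'') fails root by root.

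Second, and consequently, the whole theorem lives in the final summation you leave as ``should telescope'': the powers of $n$ must cancel across all roots. The paper needs two nontrivial identities for this:
\begin{itemize}
\item Proposition~\ref{prop:identity001} collapses the contribution of the roots in $F_\ell$ to $\ell\, n^{n-\ell-2}(n-2)^{\ell-1}(n-\ell-1)$.
\item Proposition~\ref{prop:(n-2)} shows that adding the contributions of the roots in $\{2,\dots,n-\ell\}$ produces the product. It is proved by replacing $n/(n-1)$ with $a/(a-1)$ for an auxiliary parameter $a$, adding an explicit correction term, and inducting on $n$.
\end{itemize}
Without explicit per-root formulas and a proof of the resulting identity, your argument does not reach $(n-1)^{n-\ell-2}(n-2)^{\ell}(n-\ell-1)$, and the checks at $\ell=0,1$ cannot replace this. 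The paper's main proof avoids the computation altogether. It uses the Chauve--Dulucq--Guibert bijection $\Psi$ and the decomposition $\Psi(\mathcal{U}_n^{1\not\sim F_\ell})=\mathbf{B}\sqcup\mathbf{B}'\sqcup\mathbf{B}''$, counting each piece with the forest formula.
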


For $\ell=1$, our formula  recovers the count $(n-1)^{n-3}(n-2)^2$, obtained in \cite{CGS}.
Our proof of Theorem~\ref{thm:|Un|} is based on the bijection $\Psi\colon\Un\to\Bn$ of Chauve, Dulucq, and Guibert \cite{CDG}, where $\Bn$ denotes the set of labeled rooted trees with vertex set $[n]$ in which vertex $1$ is a non-root leaf.
We now recall the construction of $\Psi$ from \cite{CDG} (see also \cite{CGS}).

\medskip
\noindent\textbf{Construction of $\Psi$.}
Let $T \in \Un$ with root $r$. Since $T$ is uprooted, we have $r \neq 1$. Consider the maximal increasing subtree $T_0$ of $T$ containing the vertex $1$. 
Deleting the edges of $T_0$ decomposes $T$ into rooted subtrees $T_1,\ldots,T_k$, each with at least one edge, and let $r_i$ denote the root of $T_i$ for $1 \le i \leq k$. 
The root $r$ of $T$ is the root of exactly one of these subtrees, say $T_j$, where $r_j=r$. 
Since $1$ is the root of $T_0$, it will be a leaf of $T_j$.

\noindent
Suppose that $T_0$ has $m$ vertices. Then $T_0$ can be encoded by an increasing tree $\overline{T}_0$ with vertex set $[m]$ together with the set $S_0$ of labels appearing in $T_0$.
We write $T_0=(\overline{T}_0,S_0)$. 
Let $\widetilde{S}_0=(S_0\setminus\{1\})\cup\{r\}$.
The pair $(\overline{T}_0,\widetilde{S}_0)$ determines an increasing tree $\widetilde{T}_0$ whose root is $r_0=\min(\widetilde{S}_0)$. 
We attach the subtree $T_j$ to $\widetilde{T}_0$ at vertex $r$, and then attach each of the remaining subtrees $T_i$ (for $i\neq j$) at the vertex $r_i$.
The resulting rooted tree $T'=\Psi(T)$ has root $r_0$ and vertex $1$ as a non-root leaf.
Hence $\Psi(T)\in \Bn$.

\medskip
\noindent\textbf{Inverse construction.}
We now describe $\Psi^{-1}\colon \Bn\to\Un$.
Let $T' \in \Bn$ with root $r'$. 
Since vertex $1$ is a non-root leaf of $T'$, we have $r' \neq 1$. Let $T'_0$ be the maximal increasing subtree of $T'$ rooted at $r'$. 
Deleting the edges of $T'_0$ decomposes $T'$ into rooted subtrees $T'_1,\ldots,T'_k$, each containing at least one edge, and let $r'_i$ denote the root of $T'_i$ for $1 \le i \le k$. 
Let $T'_j$ be the unique subtree containing the vertex $1$ as a leaf.

\noindent
Replace the label $r'_j$ in $T'_0$ by $1$, and relabel the vertices so that their relative order is preserved. 
This produces an increasing subtree $\widetilde{T}'_0$ rooted at $1$. 
Attach $\widetilde{T}'_0$ to $T'_j$ at the vertex $1$, and then attach each of the remaining subtrees $T'_i$ (for $i\neq j$) at its root $r'_i$. 
The resulting rooted tree $T$ belongs to $\Un$, and by construction satisfies $\Psi(T)=T'$.

As noted earlier, $\UnFl$ is a (proper) subset of $\Un$. 
To enumerate $\UnFl$, we study its image under the bijection $\Psi\colon \Un \longrightarrow \Bn$.
In this section, we show that $\Psi(\UnFl)$ can be decomposed into explicitly described disjoint subsets of $\Bn$, and we determine the cardinality of each subset.
We begin with a classical enumeration result.

\begin{theorem}[\cite{Moon}*{Theorem 3.3}]\label{Thm:forest}
Let $f(n,k)$ denote the number of forests with $n$ labeled vertices consisting of $k$ disjoint trees, where $k$ specified vertices belongs to distinct components. Then, for $1 \le k \le n$, we have  $f(n,k)= k\ n^{n-k-1}.$
\end{theorem}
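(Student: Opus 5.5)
We prove the formula by induction on $n$, deleting one of the specified vertices. The case $k=n$ is the base: the only such forest has no edges, so $f(n,n)=1=n\cdot n^{-1}$. It is also convenient to set $f(m,0)=0$ for $m\ge 1$; this does no harm because such terms will always carry a factor $0$ below.

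Fix $1\le k<n$. Let the specified vertices be $s_1,\ldots,s_k$ and put $m=n-k$. Take a forest of $k$ trees with $s_1,\ldots,s_k$ in distinct components, and look at the neighbourhood $J$ of $s_1$.

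1. No vertex of $J$ is specified, since specified vertices lie in different trees. So $J$ is a subset of the $m$ non-specified vertices; let $j=|J|$.
2. Deleting $s_1$ splits its tree into $j$ subtrees, each containing exactly one vertex of $J$. The result is a forest on $n-1$ vertices with $k-1+j$ trees, whose roots $s_2,\ldots,s_k$ together with the vertices of $J$ lie in distinct components.
3. Conversely, such a forest together with $J$ gives back the original forest, by joining $s_1$ to every vertex of $J$. This map is a bijection.

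Summing over $j$ gives the recursion
\[
f(n,k)=\sum_{j=0}^{m}\binom{m}{j}\, f(n-1,\,k-1+j).
\]
The term with $k-1+j=0$ (only possible when $k=1$, $j=0$) vanishes by our convention.

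By the induction hypothesis, $f(n-1,k-1+j)=(k-1+j)\,(n-1)^{m-j-1}$. This also covers the extreme case $k-1+j=n-1$, where the value is $1$. Put $x=n-1$. By the binomial theorem,
\[
\sum_{j=0}^m\binom{m}{j}x^{m-j-1}=\frac{(x+1)^m}{x}.
\]
Differentiating $(y+x)^m$ in $y$ at $y=1$ gives
\[
\sum_{j=0}^m\binom{m}{j}\,j\,x^{m-j-1}=\frac{m(x+1)^{m-1}}{x}.
\]
Hence
\[
f(n,k)=\frac{(k-1)n^{m}+m\,n^{m-1}}{n-1}
=\frac{n^{m-1}\bigl((k-1)n+n-k\bigr)}{n-1}
=\frac{n^{m-1}\,k(n-1)}{n-1}=k\,n^{n-k-1},
\]
which proves the theorem.

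The step that needs care is the bookkeeping of edge cases in the recursion. One must check that the neighbours of $s_1$ can never be specified vertices, and that the degenerate terms ($k-1+j=0$, or $k-1+j=n-1$ with the formally negative exponent) agree with the closed form. The algebra afterwards is just the binomial identity above. As a sanity check, the case $k=1$ gives $f(n,1)=n^{n-2}$, which is Cayley's formula.
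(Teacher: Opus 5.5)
Your proof is correct. The neighbourhood-of-$s_1$ bijection gives the recurrence
\[
f(n,k)=\sum_{j=0}^{n-k}\binom{n-k}{j}\,f(n-1,k-1+j),
\]
and you handle the degenerate terms $k-1+j=0$ and $k-1+j=n-1$ properly. The two binomial sums evaluate to $n^{m}/(n-1)$ and $m\,n^{m-1}/(n-1)$, which gives $k\,n^{n-k-1}$.

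The paper does not prove this statement; it only quotes it from Moon's monograph, so there is no in-paper argument to compare against. Your induction via deleting a specified vertex is the standard self-contained proof of this formula. It would make the paper's repeated uses of the result (counting rooted forests with a prescribed set of roots) independent of the citation.
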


For $n\geq 3$ and $1\leq \ell \leq n-2$, we define 
\[
\Bn^{1\nsim F_{\ell}} = \{T\in \Bn\colon 1\nsim v \text{ for any } v\in F_{\ell}\}\subseteq \Bn.
\]
Let $T^{\prime} \in \Bn$, and let $T \in \Un$ be the unique tree such that $T^{\prime}=\Psi(T)$. From the construction of $\Psi$, the leaf $1$ is adjacent to a vertex $j$ in $T'$ if and only if $j=\Par_T(1)$. 
Hence,
$$\Psi(\UnFl)\subseteq \Bn^{1\nsim F_{\ell}}.$$
To describe this image more precisely, we introduce two disjoint subsets of $\Bn^{1\nsim F_{\ell}}$ as follows.
\[\mathbf{B} = \{T^{\prime}\in \Bn^{1\nsim F_{\ell}} \colon \root(T^{\prime})= r^{\prime}\in F_{\ell} \text{ and } r^{\prime} \nsim j,  \text{ for all } j > r^{\prime}\},\]
and
\[\mathbf{B}^{\prime} = \{T^{\prime}\in \Bn^{1\nsim F_{\ell}} \colon \root(T^{\prime})= r^{\prime}\in \{2,3,\dots , n-\ell\} \text{ and } r^{\prime} \nsim v, \text{ for all } v\in F_{\ell}\}.\]
We then have the following.

\begin{lemma}\label{lemma:BuB'}
Let $\mathbf{B}$ and $\mathbf{B}^{\prime}$ be the subsets of $\Bn^{1\nsim F_{\ell}}$ defined above.
Then we have \[\mathbf{B}\ \coprod \ \mathbf{B}^{\prime} \subseteq \Psi (\UnFl) . \]
\end{lemma}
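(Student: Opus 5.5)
Since $\Psi\colon \Un\to\Bn$ is a bijection, it suffices to take $T'\in \mathbf{B}\cup\mathbf{B}'$, set $T=\Psi^{-1}(T')\in\Un$ (built by the inverse construction above), and show that no neighbour of $1$ in $T$ lies in $F_\ell$. Disjointness of $\mathbf{B}$ and $\mathbf{B}'$ is immediate, since the root of a tree in $\mathbf{B}$ lies in $F_\ell$ while the root of a tree in $\mathbf{B}'$ lies in $\{2,\dots,n-\ell\}$.

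First I will list the neighbours of $1$ in $T$. In the inverse construction, vertex $1$ is a leaf of $T'_j$, and $T'_j$ is glued to $\widetilde{T}'_0$ at $1$. The other subtrees $T'_i$ are attached at their roots $r'_i\neq 1$. Hence the neighbours of $1$ in $T$ are of two kinds:
\begin{itemize}
\item its unique neighbour in $T'_j$, which is its unique neighbour in $T'$ and hence not in $F_\ell$, because $T'\in\Bn^{1\nsim F_\ell}$;
\item the sons of the root $1$ of $\widetilde{T}'_0$.
\end{itemize}
This matches the remark before the lemma that $\Par_T(1)$ is the neighbour of $1$ in $T'$. It therefore remains to control the sons of $1$ in $\widetilde{T}'_0$.

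These sons are the images $f(s)$ of the sons $s$ of $r'$ in $T'_0$. Here $f$ is the order-preserving bijection from the label set $V(T'_0)$ onto $S=(V(T'_0)\setminus\{r'_j\})\cup\{1\}$. I will check directly that $f(x)=x$ for $x>r'_j$ and that $f(x)<x$ for $x\le r'_j$, where the $k$-th smallest label goes to the $k$-th smallest element of $S$. In every case $f(x)\le x$.

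Now I split by the two sets.
\begin{itemize}
\item If $T'\in\mathbf{B}$, then $r'\in F_\ell$ is adjacent to no larger vertex. Since $T'_0$ is increasing, every son of $r'$ in $T'_0$ exceeds $r'$, so $r'$ has no sons in $T'_0$. Thus $T'_0=\{r'\}$, so $1$ is a leaf of $\widetilde{T}'_0$ and its only neighbour in $T$ is its neighbour in $T'$.
\item If $T'\in\mathbf{B}'$, then every son $s$ of $r'$ in $T'_0$ is adjacent to $r'$, hence $s\notin F_\ell$, so $s\le n-\ell$. Then $f(s)\le s\le n-\ell$, so $f(s)\notin F_\ell$.
\end{itemize}
In both cases $1\nsim v$ for every $v\in F_\ell$, so $T\in\UnFl$.

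The main obstacle is making precise the claim $f(x)\le x$ about the relabelling in $\Psi^{-1}$, and confirming that no subtree $T'_i$ with $i\neq j$ gets attached to the node labelled $1$. The latter holds because attachments are made at the vertices labelled $r'_i$, and $r'_i\ne 1$ since $1$ is a leaf of $T'_j$.
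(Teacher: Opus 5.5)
Your proof is correct and follows essentially the paper's route. You compute $T=\Psi^{-1}(T')$ via the inverse construction and identify $\Par_T(1)$ with the neighbour of $1$ in $T'$; then you observe $T'_0=\{r'\}$ for $\mathbf{B}$, and for $\mathbf{B}'$ you read off the sons of $1$ as the relabelled sons of $r'$ in $T'_0$. Your observation that the order-preserving relabelling satisfies $f(x)\le x$ is a cleaner replacement for the paper's case split on whether $r'_j=r'$ and its somewhat informal counting argument.
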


\begin{proof}
Recall that for each $T^{\prime}\in \Bn$, by the bijection $\Psi$, there exists a unique $T\in \Un$ such that $T^{\prime}=\Psi(T)$. 
Let $r$ and $r^{\prime}$ denote the roots of $T$ and $T^{\prime}$, respectively.  
Further, we have seen that $\Psi(\UnFl)\subseteq \Bn^{1\nsim F_{\ell}}$.
By construction, the sets $\mathbf{B}$ and $\mathbf{B}^{\prime}$ are disjoint, i.e., $\mathbf{B}\cap \mathbf{B}^{\prime}=\emptyset$.  
We now show that each of these sets is contained in $\Psi(\UnFl)$.

Let $T^{\prime}\in \mathbf{B}$.  
Then $T^{\prime}$ is a tree in $\Bn^{1\nsim F_{\ell}}$ whose root $r^{\prime}$ lies in $F_{\ell}$, and for all $v\in \son_{T^{\prime}}{(r^{\prime})}$, we have $v<r^{\prime}$.  
Therefore, $T^{\prime}\in \UnFl$, and we have $\mathbf{B}\subseteq \UnFl$.
Moreover, the maximal increasing subtree $T^{\prime}_{0}$ of $T^{\prime}$ with root $r^{\prime}$ is precisely $\{r^{\prime}\}$.  
By the construction of $\Psi$, the maximal increasing subtree $T_{0}$ of $T$ rooted at $1$ is $\{1\}$.  
Thus, in this case, $T^{\prime}=\Psi(T)=T$.  
This shows that $\mathbf{B}\subseteq \Psi(\UnFl)$, and $\Psi(T)=T$ for all $T\in \mathbf{B}$.

Now, let $T^{\prime}\in \mathbf{B}^{\prime}$.  
Then $r^{\prime}\in \{2,3,\dots,n-\ell\}$, and by definition, $r^{\prime}$ is not adjacent to any vertex in $F_{\ell}$.  
Since the leaf $1$ in $T^{\prime}$ is also not adjacent to any vertex in $F_{\ell}$, it follows that in the corresponding tree $T$, the parent $\Par_{T}(1)\notin F_{\ell}$.
We next show that no vertex in $F_{\ell}$ can be a son of the vertex $1$ in the tree $T$.  
Consider the maximal increasing subtree $T^{\prime}_{0}$ of $T^{\prime}$ rooted at $r^{\prime}$, and let $\operatorname{son}_{T^{\prime}_{0}}(r^{\prime})$ denote the set of sons of $r^{\prime}$ in $T^{\prime}_{0}$, with $|\operatorname{son}_{T^{\prime}_{0}}(r^{\prime})| = s$.  
In the construction of the inverse of $\Psi$, let $r_{j}^{\prime}$ denote the root of the subtree $T^{\prime}_{j}$ of $T^{\prime}$ containing the leaf $1$.  
Suppose first that $r_{j}^{\prime}=r^{\prime}$.  
Then the maximal increasing subtree $T_{0}$ of $T$ rooted at $1$ is obtained from $T^{\prime}_{0}$ by relabeling $r^{\prime}$ with $1$, while keeping all other vertex labels unchanged.  
Hence, in this case, $\son_{T_{0}}{(1)}=\son_{T^{\prime}_{0}}{(r^{\prime})}$, and therefore none of the vertices in $F_{\ell}$ is a son of vertex $1$ in $T$.
Now suppose $r_{j}^{\prime}\neq r^{\prime}$.  
Then we have $r_{j}^{\prime}>r^{\prime}$, and $T_{0}$ is obtained by replacing the label $r_{j}^{\prime}$ by $1$ in $T^{\prime}_{0}$ and relabeling the vertices so that their relative order is preserved.
Accordingly, the least labeled vertex in $\son_{T^{\prime}_{0}}{(r^{\prime})}$ will be relabeled as $r^{\prime}$.
Regardless of whether $r_{j}^{\prime}$ belongs to $\son_{T^{\prime}_{0}}{(r^{\prime})}$ or not, since $r^{\prime}\notin F_{\ell}$ and $r^{\prime}\nsim v$ for all $v\in F_{\ell}$, we have that at least $s$ vertices in $(\son_{T^{\prime}_{0}}{(r^{\prime})}\setminus\{r_{j}^{\prime}\})\cup \{r^{\prime}\}$ are not in $F_{\ell}$.
It then follows from the construction that $\son_{T_{0}}{(1)}\subseteq\{2,3,\dots , n-\ell\}$.
Hence, in both cases, none of the vertices in $F_{\ell}$ can be either the parent or a son of vertex $1$ in the tree $T$ satisfying $\Psi(T)=T^{\prime}$. 
Hence, $T\in \UnFl$, and therefore $T^{\prime}\in \Psi(\UnFl)$.
Combining this with the earlier part of the proof, we conclude that $\mathbf{B}\ \coprod \ \mathbf{B}^{\prime} \subseteq \Psi (\UnFl)$.
\end{proof}

We have already observed that $\Psi(\UnFl)\subseteq \Bn^{1\nsim F_{\ell}}$, and that the sets $\mathbf{B}$ and $\mathbf{B}^{\prime}$ are disjoint subsets of $\Psi(\UnFl)$.  
To complete the description of $\Psi(\UnFl)$, we now turn our attention to the remaining trees in $\Bn^{1\nsim F_{\ell}}\setminus(\mathbf{B}\coprod\mathbf{B}^{\prime})$, and investigate those whose corresponding preimage under $\Psi$ lies in $\UnFl$.

\begin{proposition}\label{prop:B"}
Let $\mathbf{B}'' = \Psi(\UnFl)\setminus(\mathbf{B}\coprod\mathbf{B}^{\prime})$.
Then the trees in $\mathbf{B}''$ are precisely those for which the root $r^{\prime}$ belongs to $\{2,3,\dots,n-\ell\}$ and is adjacent to exactly one vertex in $F_{\ell}$; that is,
\[
r^{\prime}\sim v \text{ for some } v\in F_{\ell}, \quad\text{and}\quad 
r^{\prime}\nsim w \text{ for all } w\in F_{\ell}\setminus\{v\}.
\]
Moreover, there does not exist an increasing path 
\[r^{\prime}\rightarrow b_{1}\rightarrow \cdots \rightarrow b_{t}\rightarrow \gamma, \quad (t\geq1),\]
with $b_{i}\in \{2,3,\dots,n-\ell\}\setminus\{r^{\prime}\}$ and $\gamma\in F_{\ell}\setminus\{v\}$ satisfying $\gamma < v$, such that either 
\begin{itemize}
    \item $1$ is a descendant of $v$, or
    \item $1$ is a descendant of a vertex $\rho>v$, where there exists an increasing path 
    \[
    r^{\prime}\rightarrow b_{1}\rightarrow \cdots \rightarrow b_{t}\rightarrow \rho
    \]
    with $b_{i}\in \{2,3,\dots,n-\ell\}\setminus\{r^{\prime}\}$.
\end{itemize}
\end{proposition}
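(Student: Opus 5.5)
We will characterize $\mathbf{B}''$ by running the inverse map $\Psi^{-1}$ on an arbitrary $T'\in\Bn^{1\nsim F_\ell}\setminus(\mathbf{B}\coprod\mathbf{B}')$. For such a $T'$ with $T=\Psi^{-1}(T')$, we decide exactly when $T\in\UnFl$. As noted before Lemma~\ref{lemma:BuB'}, $\Par_T(1)$ is the neighbour of the leaf $1$ in $T'$, and this is not in $F_\ell$. So the only issue is whether some vertex of $F_\ell$ becomes a son of $1$ in $T$. By the inverse construction, $\son_T(1)=\son_{\widetilde T'_0}(1)$. These sons are obtained from $\son_{T'_0}(r')$ by removing $r'_j$ (if it is a son) and inserting the smallest label that survives the order-preserving relabeling. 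Accordingly we first split on the root $r'$.

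\emph{Case $r'\in F_\ell$ but $T'\notin\mathbf{B}$.} Here $r'$ has a larger son $j>r'$, and $j\in F_\ell$ since $F_\ell$ is an upper interval. We plan to show that after relabeling, $\son_T(1)$ still contains an element of $F_\ell$. All vertices of $T'_0$ exceed $r'\ge n-\ell+1$, and the labels of $\widetilde T'_0$ other than $1$ are $\widetilde S'_0\setminus\{r'_j\}\cup\{r'\}$. Hence every son of $1$ lies in $F_\ell$, and $T\notin\UnFl$. This case contributes nothing to $\mathbf{B}''$.

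\emph{Case $r'\le n-\ell$ and $r'$ adjacent to at least two vertices of $F_\ell$.} Both such vertices are sons of $r'$ in $T'_0$, since they are larger. The relabeling removes at most one son, namely $r'_j$, and shifts labels monotonically. So at least one son of $1$ in $T$ still lies in $F_\ell$, and $T\notin\UnFl$. This gives the ``exactly one'' condition.

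\emph{Main case: $r'\le n-\ell$ with exactly one neighbour $v\in F_\ell$.} Now $\son_{T'_0}(r')$ contains exactly one element of $F_\ell$, namely $v$. We have $T\in\UnFl$ precisely when the relabeling pushes $v$'s slot out of $F_\ell$. The relabeling is the order-preserving bijection from $S'_0\setminus\{r'_j\}\cup\{r'\}$ onto the label set of $\widetilde T'_0$ minus the point $1$. Its effect is to shift down by one position every label lying strictly between $r'$ and $r'_j$ in the sorted order of $S'_0$.

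Thus $v$ is relabeled to the predecessor of $v$ in $S'_0$ exactly when $r'<v<r'_j$, that is, when $1$ hangs below a vertex $r'_j\ge v$ of $T'_0$. If $r'_j=v$, then $v$ is simply deleted. In that situation we get $T\notin\UnFl$ iff the predecessor label lies in $F_\ell$, i.e.\ iff $S'_0$ contains some $\gamma\in F_\ell$ with $\gamma<v$. Such a $\gamma$ is not a son of $r'$, so $\gamma\in T'_0$ is reached by an increasing path $r'\to b_1\to\cdots\to b_t\to\gamma$ with $t\ge1$. The intermediate $b_i$ lie in $\{2,\dots,n-\ell\}$, since an intermediate vertex in $F_\ell$ smaller than $\gamma$ can be taken as $\gamma$ instead.

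We split the position of $1$ as follows. If $1$ is a descendant (outside $T'_0$) of $v$ itself, or of some $\rho>v$ in $T'_0$, then $r'_j\ge v$ and the shift applies. Otherwise $v$'s label stays in $F_\ell$ and the tree fails. This last conclusion needs to be reconciled with the statement's formulation, which is where I expect the main obstacle: matching the precise bookkeeping of the relabeling to the two bulleted conditions. We also need to check the converse, namely that whenever no such $\gamma$ exists the shift lands $v$ in $\{2,\dots,n-\ell\}$. I would try first to verify this with the explicit rank-shift description of the relabeling, and to check it on small cases with $\ell=1,2$.
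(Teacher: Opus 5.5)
Your route is the paper's: invert $\Psi$, use $\Par_T(1)=\Par_{T'}(1)\notin F_\ell$ so that only $\son_T(1)$ matters, rule out $r'\in F_\ell$ and roots with at least two sons in $F_\ell$, and in the remaining case read the answer off the relabeling. The criterion you reach there is correct: $T\in\UnFl$ iff $r'_j\ge v$ and $S'_0$ has no element of $F_\ell$ below $v$. Your rank-shift description is also more precise than the paper's wording: if $S'_0=\{s_1<\cdots<s_m\}$ and $r'_j=s_q$, the vertices labelled $s_2,\dots,s_q$ receive $s_1,\dots,s_{q-1}$, and all others keep their labels. (Discard your opening summary ``remove $r'_j$, insert the smallest surviving label'': if the sons of $r'$ in $T'_0$ are $s_2,s_4$ and $r'_j=s_3$, the sons of $1$ in $T$ are $s_1,s_4$.) What is missing is the final step, which you leave open as the ``main obstacle''. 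It is routine.

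The converse you list is already inside your own ``iff''. Suppose no element of $S'_0\cap F_\ell$ lies below $v$. Then the predecessor of $v$ in $S'_0$ exists (since $r'<v$) and lies in $\{2,\dots,n-\ell\}$. The other sons of $r'$ in $T'_0$ are already in $\{2,\dots,n-\ell\}$, and labels only decrease.

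For the bullets, note that the part of the $r'$--$1$ path lying in $T'_0$ is increasing and ends at $r'_j$. Hence $r'_j\ge v$ iff $1$ is a descendant of $v$ or of some $\rho'\in V(T'_0)$ with $\rho'>v$.
\begin{itemize}
\item Suppose $\rho'$ is not below $v$, and walk the increasing path $r'\to c_1\to\cdots\to\rho'$. It avoids $v$, so $c_1\in\{2,\dots,n-\ell\}$.
\item Let $\rho$ be the first vertex of this path in $F_\ell$; it exists because $\rho'\in F_\ell$. Then $\rho\neq v$, hence $\rho>v$ by the no-$\gamma$ condition.
\item So $1$ is a descendant of $\rho$, and $\rho$ is reached through vertices of $\{2,\dots,n-\ell\}\setminus\{r'\}$. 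This is the second bullet, and it is precisely the paper's Case~2 decomposition $r'\to b_1\to\cdots\to b_t\to\rho_1\to\cdots\to r'_j$ with $\rho_1>v$.
\end{itemize}
Conversely, either bullet places a vertex of $T'_0$ that is $\ge v$ above $1$, which forces $r'_j\ge v$.

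Your unease about the formulation is justified. Taken literally, the bullets sit inside ``there does not exist'', and that reading is false. Take $n=5$, $\ell=1$ and $T'$ with edges $3\to5$, $5\to4$, $3\to2$, $2\to1$. No $\gamma$-path exists, yet $\Psi^{-1}(T')$ is $3\to2\to1\to5\to4$, in which $1\sim5$. The statement is meant, and proved in the paper, as ``no such $\gamma$-path, \emph{and} $1$ is a descendant of $v$ or of such a $\rho$''. That is exactly what your criterion gives.
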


\begin{proof}
Let $T^{\prime}\in\Bn^{1\nsim F_{\ell}}$.  
Since $\Bn^{1\nsim F_{\ell}}\subseteq \Bn$, there exists a unique $T\in \Un$ such that $T^{\prime}=\Psi(T)$.  
Let $r$ and $r^{\prime}$ denote the roots of $T$ and $T^{\prime}$, respectively.  
We now examine the trees $T^{\prime}\in \Bn^{1\nsim F_{\ell}}\setminus(\mathbf{B}\coprod\mathbf{B}^{\prime})$ for which the corresponding $T\in\UnFl$.

If $r'=n$, then $T^{\prime}\in \mathbf{B}$, that contradicts our assumption.
Now, suppose that $r^{\prime}\in F_{\ell}\setminus\{n\}$.
As $T^{\prime}\notin \mathbf{B}$, we have that $r^{\prime}$ is adjacent to some $j\in F_{\ell}\setminus\{r^{\prime}\}$ with $j>r^{\prime}$.  
Considering the maximal increasing subtree $T^{\prime}_0$ of $T^{\prime}$ rooted at $r^{\prime}$, we observe that $V(T^{\prime}_0)\subseteq F_{\ell}$, since $r^{\prime}\in F_{\ell}\setminus\{n\}$.  
Therefore, under the inverse construction of $\Psi$, the vertex $r^{\prime}$ is relabeled as $1$, and we obtain $\son_{T_{0}}(1)\subseteq F_{\ell}$ in the maximal increasing subtree $T_{0}$ of $T$.  
Thus, the vertex $1$ is adjacent to some vertices in $F_{\ell}$ in $T$.  
Hence $T\notin\UnFl$, and in this case we have $T^{\prime}\notin\Psi(\UnFl)$.

Let $r^{\prime}\in \{2,3,\dots,n-\ell\}$.  
If the root $r^{\prime}$ is not adjacent to any vertex in $F_{\ell}$, then $T^{\prime}\in\mathbf{B}^{\prime}$.  
Since we are considering $T^{\prime}\notin \mathbf{B}^{\prime}$, it follows that $r^{\prime}$ must be adjacent to at least one vertex in $F_{\ell}$.  
Suppose that $r^{\prime}$ is adjacent to exactly $m$ vertices ($m\geq 2$) in $F_{\ell}$, say $\{w_1 , w_2 ,\dots , w_m\}\subseteq F_{\ell}, \quad \text{with } w_1 < w_2 < \cdots < w_m$.
Consider the maximal increasing subtree $T_{0}^{\prime}$ of $T^{\prime}$ with root $r^{\prime}$.  
Recall that $r_{j}^{\prime}$ denotes the root of the subtree $T_{j}^{\prime}$, which belongs to the collection of subtrees obtained by deleting the edges of $T_{0}^{\prime}$ from $T^{\prime}$, and that contains the leaf $1$.  
We have that $r_{j}^{\prime}$ is either equal to $r^{\prime}$ or a non-root vertex of $T_{0}^{\prime}$.  
Suppose first that $r_{j}^{\prime}=r^{\prime}$.  
Then, by the inverse construction of $T$ from $T^{\prime}$, the maximal increasing subtree $T_{0}$ of $T$ is obtained from $T_{0}^{\prime}$ by relabeling $r^{\prime}$ as $1$, while all other vertices retain their labels.  
Therefore, in this case, $T\notin\UnFl$, since in $T$ the vertex $1$ is adjacent to all the vertices in $\{w_1, w_2, \dots, w_m\}\subseteq F_{\ell}$.
Now, suppose that $r_{j}^{\prime}\neq r^{\prime}$.  
Then, in the inverse construction of $T$ from $T^{\prime}$, the maximal increasing subtree $T_{0}$ of $T$ rooted at $1$ is obtained by replacing $r_{j}^{\prime}$ with $1$ in the label set of $T_{0}^{\prime}$ and relabeling the vertices accordingly.  
If $r_{j}^{\prime}< w_{1}$, then all vertices $w_i$ remain unchanged during the relabeling, and hence in $T$ we have $1\sim w_i$ for all $1\leq i \leq m$.  
If $r_{j}^{\prime}\geq w_{m}$, then after relabeling, we observe that $1\sim w_i$ for all $1\leq i\leq m-1$.  
Furthermore, if $w_1< \cdots < w_t \leq r_{j}^{\prime} < w_{t+1}<\cdots < w_m$, for some $1\leq t < m$, then in the subtree $T_{0}$ obtained from $T_{0}^{\prime}$, the vertex $1$ becomes adjacent to all the vertices in $\{w_{t+1}, w_{t+2}, \dots, w_m\}$. 
Thus, in every case where $r_{j}^{\prime}\neq r^{\prime}$, the leaf $1$ in $T$ is adjacent to at least one vertex in $F_{\ell}$. %
Therefore, $T\notin \UnFl$, and hence $T^{\prime}\notin \Psi(\UnFl)$ whenever the root $r^{\prime}$ of $T^{\prime}$ is adjacent to at least two vertices in $F_{\ell}$.

Now, let us consider the case where $r^{\prime}\in \{2,3,\dots,n-\ell\}$ and $r^{\prime}\sim v$ for some $v\in F_{\ell}$, but $r^{\prime}\nsim w$ for any $w\in F_{\ell}\setminus \{v\}$.  
We claim that if $T^{\prime}$ contains an increasing path of the form
\begin{equation}\label{eq:increasing_path}
r^{\prime}\rightarrow b_{1}\rightarrow \cdots \rightarrow b_{t}\rightarrow \gamma, \quad (t\geq 1),
\end{equation}
where $b_i\in \{2,3, \dots, n-\ell\}\setminus\{r^{\prime}\}$ and $\gamma\in F_{\ell}\setminus \{v\}$ with $\gamma < v$, then $T\notin \UnFl$.

Suppose that such increasing paths exist in $T^{\prime}$.  
Without loss of generality, assume that $\gamma$ is the minimal vertex smaller than $v$ among all such paths in $T^{\prime}$.  
Consider the maximal increasing subtree $T_{0}^{\prime}$ of $T^{\prime}$ rooted at $r^{\prime}$, where $1$ is a descendant of some $r_{j}^{\prime}\in V(T_{0}^{\prime})$ such that the path $r_{j}^{\prime}\rightarrow a_1 \rightarrow \cdots \rightarrow a_{k}\rightarrow 1
$ satisfies $a_1 < r_{j}^{\prime}$.
Suppose first that $r_{j}^{\prime}=r^{\prime}$ or $r_{j}^{\prime}\leq \gamma$.  
Since $\gamma < v$ and $r^{\prime}\sim v$, the inverse construction relabels $r^{\prime}$ as $1$, while the vertex $v$ retains its label.  
Hence, in $T$, the vertex $1$ is adjacent to a vertex in $F_{\ell}$, and therefore $T\notin \UnFl$.
If instead $r_{j}^{\prime}> \gamma$, then the inverse construction replaces $r^{\prime}$ by $1$ and relabels $v$ with the largest element in $V(T_{0}^{\prime})$ that is smaller than $v$.  
Moreover, the existence of an increasing path of the form \eqref{eq:increasing_path} ensures that the vertex $v$ in $T_{0}^{\prime}$ is relabeled by a value at least $\gamma$.  
As a result, in $T$, the vertex $1$ is adjacent to a vertex in $F_{\ell}$, and hence $T\notin \UnFl$.
Therefore, whenever an increasing path of the form \eqref{eq:increasing_path} exists in $T^{\prime}$, we have $T^{\prime}\notin \Psi(\UnFl)$.

Thus, let us now consider those trees $T^{\prime}\in \Bn^{1\nsim F_{\ell}}\setminus(\mathbf{B}\coprod\mathbf{B}^{\prime})$
for which $r^{\prime}\in \{2,3,\dots,n-\ell\}$ and $r^{\prime}\sim v$ for some $v\in F_{\ell}$, while $r^{\prime}\nsim w$ for any $w\in F_{\ell}\setminus\{v\}$, and moreover, no increasing path of the form~\eqref{eq:increasing_path} exists in $T^{\prime}$.  
We now examine this situation by distinguishing two subcases depending on whether the vertex $1$ is a descendant of $v$ or not.
\medskip

\noindent 
$\textbf{Case 1.}$ 
Suppose that $1$ is a descendant of $v$.  
In this case, within $T_{0}^{\prime}$, we necessarily have $r_{j}^{\prime}\geq v$.  
Since there are no increasing paths of the form~\eqref{eq:increasing_path}, the inverse construction relabels $v$ with the largest vertex in $V(T_{0}^{\prime})\cap\{2,3,\ldots,n-\ell\}$.  
It follows that, in $T$, the vertex $1$ is not adjacent to any vertex in $F_{\ell}$, and therefore $T\in \UnFl$.
\medskip

\noindent
$\textbf{Case 2.}$ 
Suppose that $1$ is not a descendant of $v$.  
We consider the maximal increasing subtree $T_{0}^{\prime}$ of $T^{\prime}$ rooted at $r^{\prime}$.  
There are two possibilities, depending on the position of $r_{j}^{\prime}$.
If $r_{j}^{\prime}=r^{\prime}$, then the maximal subtree $T_{0}$ of $T$ is obtained by replacing $r^{\prime}$ with $1$ in $T_{0}^{\prime}$, while all other vertex labels remain unchanged.  
Hence, in $T$, we have $1\sim v$, and therefore $T\notin \UnFl$.
Next, suppose that $r_{j}^{\prime}\neq r^{\prime}$.  
If $r_{j}^{\prime}<v$, then the inverse construction again yields $1\sim v$ in $T$, and hence $T\notin \UnFl$.  
Finally, suppose that $r_{j}^{\prime}>v$.  
Since $r^{\prime}\sim v$ and $r^{\prime}$ is not adjacent to any other vertex in $F_{\ell}$, the increasing path from $r^{\prime}$ to $r_{j}^{\prime}$ in $T^{\prime}$ must be of the form
\[
r^{\prime}\rightarrow b_{1}\rightarrow \cdots \rightarrow b_{t} \rightarrow \rho_{1}\rightarrow \cdots \rightarrow \rho_{m}\rightarrow r_{j}^{\prime}, \quad (t\geq1),
\]
where $b_{i}\in \{2, 3, \dots, n-\ell\}\setminus \{r^{\prime}\}$ and $\rho_{i}\in F_{\ell}\setminus \{v\}$ such that
$b_{1}< \cdots <b_t < \rho_1 < \cdots < \rho_m$.
Since $T^{\prime}$ contains no increasing path of the form~\eqref{eq:increasing_path}, it follows that $\rho_1 > v$.  
That is, there exists a path $r^{\prime}\rightarrow b_{1}\rightarrow \cdots \rightarrow b_{t} \rightarrow \rho$ in $T^{\prime}$, where $\rho > v$ and the vertex $1$ is a descendant of $\rho$.  
Under the inverse construction, $r^{\prime}$ in $T_{0}^{\prime}$ is relabeled as $1$, and since no path of the form~\eqref{eq:increasing_path} exists and $r_{j}^{\prime}>v$, the vertex $v$ is relabeled with the largest vertex in $V(T_{0}^{\prime})\cap\{2,3,\ldots,n-\ell\}$.  
Hence, in $T$, the vertex $1$ is not adjacent to any vertex in $F_{\ell}$, implying $T\in \UnFl$.
Therefore, we conclude that if there exists an increasing path
\[
r^{\prime}\rightarrow b_{1}\rightarrow \cdots \rightarrow b_{t} \rightarrow \rho, \quad (t\geq1),
\]
in $T^{\prime}$ with $b_{i}\in \{2, 3, \dots , n-\ell\}\setminus \{r^{\prime}\}$ for $1\leq i\leq t$, $\rho\in F_{\ell}\setminus \{v\}$ satisfying $\rho > v$, and vertex $1$ is a descendant of $\rho$, then $T^{\prime}\in \Psi(\UnFl).$
\medskip

\noindent
Collecting the above cases, we obtain the stated characterization of the trees in $\mathbf{B}''$.
\end{proof}

\begin{remark}\label{rmrk:Psi}
From Lemma~\ref{lemma:BuB'} and Proposition~\ref{prop:B"}, we have
\[
\Psi(\UnFl) = \mathbf{B}\coprod\mathbf{B}^{\prime} \coprod \mathbf{B}''.
\]
Thus, the cardinality of the image set $\Psi(\UnFl)$ is obtained by summing the cardinalities of the sets $\mathbf{B}$, $\mathbf{B}^{\prime}$, and $\mathbf{B}''$.   
\end{remark}

\begin{proposition}\label{prop:identity001}
For $n \geq 3$ and $1 \leq \ell \leq n-2$, we have
\begin{equation}\label{eq:|B|.2}
\sum_{k=0}^{\ell-1} (n-k-2)(n-2)^{k-1}(n-1)^{n-k-3} = (n-1)^{n-\ell-2}(n-2)^{\ell-1}\ell.
\end{equation}
\end{proposition}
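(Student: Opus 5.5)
Since this is a finite sum identity in $\ell$, I will prove it by induction on $\ell$, keeping $n\ge 3$ fixed. Write $S_\ell$ for the left-hand side of \eqref{eq:|B|.2} and $R_\ell=(n-1)^{n-\ell-2}(n-2)^{\ell-1}\ell$ for the right-hand side. The $k=0$ term carries the factor $(n-2)^{-1}$, so I will first check that it is harmless. For $n\ge 3$ we have $n-2\ge 1$, and the $k=0$ term equals $(n-2)(n-2)^{-1}(n-1)^{n-3}=(n-1)^{n-3}$, which is an integer. The base case $\ell=1$ then reads $S_1=(n-1)^{n-3}$, and this agrees with $R_1=(n-1)^{n-3}(n-2)^0\cdot 1$.

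For the inductive step I will show that $R_{\ell+1}-R_\ell$ equals the new summand $(n-\ell-2)(n-2)^{\ell-1}(n-1)^{n-\ell-3}$, the term with $k=\ell$. Factoring out $(n-2)^{\ell-1}(n-1)^{n-\ell-3}$ gives
\[
R_{\ell+1}-R_\ell=(n-2)^{\ell-1}(n-1)^{n-\ell-3}\bigl[(n-2)(\ell+1)-(n-1)\ell\bigr].
\]
The bracket simplifies to $n\ell+n-2\ell-2-n\ell+\ell=n-\ell-2$, which is exactly the required coefficient. Hence $S_{\ell+1}=S_\ell+(n-\ell-2)(n-2)^{\ell-1}(n-1)^{n-\ell-3}=R_\ell+(R_{\ell+1}-R_\ell)=R_{\ell+1}$.

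This computation works as an identity of rational expressions. Exponents such as $n-\ell-3$ may be negative at the top of the range $\ell\le n-2$, but since $n-1\ge 2$ and $n-2\ge 1$ the algebra is unaffected. Equivalently, the sum telescopes with $R_0=0$, and there is no real obstacle. The only point needing care is to confirm that the telescoping form $R_\ell$ matches the intended range $1\le\ell\le n-2$, including the endpoint $\ell=n-2$, where the factor $(n-1)^{0}$ appears.
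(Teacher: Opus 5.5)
Your proof is correct and takes the same route as the paper, which only states that the identity follows by a straightforward induction on $\ell$; your base case and the computation $(n-2)(\ell+1)-(n-1)\ell=n-\ell-2$ supply exactly the omitted details. One small correction: in the step $\ell\to\ell+1\le n-2$ the exponent $n-\ell-3$ is in fact nonnegative, so the only non-polynomial factor anywhere is the harmless $(n-2)^{-1}$ in the $k=0$ term.
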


\begin{proof}
The identity follows by a straightforward induction on $\ell$.
\end{proof}

Note that Proposition~\ref{prop:identity001} may be of independent interest and will be used in subsequent sections.

We now compute the cardinality of each of the subsets $\mathbf{B}$, $\mathbf{B}^{\prime}$, and $\mathbf{B}''$ of $\Psi(\UnFl)$.

\begin{lemma}\label{lemma:|B|}
For $n\geq 3$ and $1\leq \ell \leq n-2$, we have
\[
|\mathbf{B}| = (n-\ell - 1)\,(n-1)^{\,n-\ell -2}\,(n-2)^{\,\ell -1}\,\ell.
\]
\end{lemma}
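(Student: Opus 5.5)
The plan is to split $\mathbf{B}$ according to the root, count each piece by deleting the leaf $1$ and applying Theorem~\ref{Thm:forest}, and then sum using Proposition~\ref{prop:identity001}.

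\textbf{Splitting by the root and removing the leaf $1$.} Write the root as $r'=n-k$ with $0\le k\le \ell-1$, and let $\mathbf{B}_k\subseteq\mathbf{B}$ be the trees with root $n-k$. A tree $T'\in\mathbf{B}_k$ satisfies three conditions.
\begin{enumerate}
\item Vertex $1$ is a non-root leaf.
\item The neighbour of $1$ lies outside $F_\ell$. Since $1\neq r'$ and $r'\in F_\ell$, this neighbour is also not the root.
\item The root $r'$ is not adjacent to any of the $k$ vertices $r'+1,\dots,n$.
\end{enumerate}
Deleting the leaf $1$ gives a bijection between $\mathbf{B}_k$ and pairs $(u,S)$. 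Here $u\in\{2,\dots,n-\ell\}$ is the former neighbour of $1$, giving $n-\ell-1$ choices. The second entry $S$ is a tree on the $N=n-1$ vertices $\{2,\dots,n\}$, rooted at $r'$, in which $r'$ is adjacent to none of $r'+1,\dots,n$. Re-attaching $1$ to $u$ never creates an edge at the root, so condition (3) is unaffected and the correspondence is clean. Hence
\[
|\mathbf{B}_k|=(n-\ell-1)\,\tau_k ,
\]
where $\tau_k$ is the number of trees on $N$ labelled vertices in which one specified vertex $v$ avoids $k$ specified other vertices.

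\textbf{Counting $\tau_k$ with Theorem~\ref{Thm:forest}.} I will condition on the neighbour set $A$ of $v$, with $|A|=s\ge 1$. The set $A$ must be chosen from the $N-1-k$ allowed vertices. Removing $v$ leaves a forest on $N-1$ vertices with $s$ trees, each containing exactly one vertex of $A$. By Theorem~\ref{Thm:forest} there are $f(N-1,s)=s(N-1)^{N-s-2}$ such forests. Therefore
\[
\tau_k=\sum_{s\ge1}\binom{N-1-k}{s}\,s\,(N-1)^{N-s-2}.
\]
Using $s\binom{m}{s}=m\binom{m-1}{s-1}$ with $m=N-1-k$ and the binomial theorem, this collapses to
\[
\tau_k=(N-1-k)\,(N-1)^{k-1}\,N^{N-k-2}.
\]
As a cross-check, the matrix tree theorem for $K_N$ minus a star $K_{1,k}$ gives the same value. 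Substituting $N=n-1$ yields
\[
\tau_k=(n-k-2)\,(n-2)^{k-1}\,(n-1)^{n-k-3}.
\]

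\textbf{Summing over the root.} Combining the two steps,
\[
|\mathbf{B}|=(n-\ell-1)\sum_{k=0}^{\ell-1}(n-k-2)(n-2)^{k-1}(n-1)^{n-k-3}.
\]
This sum is exactly the left-hand side of \eqref{eq:|B|.2}, so Proposition~\ref{prop:identity001} turns it into $(n-1)^{n-\ell-2}(n-2)^{\ell-1}\ell$, which gives the stated formula.

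The main point needing care is the binomial collapse in the computation of $\tau_k$, including the edge case $k=0$, where the factor $(n-2)^{-1}$ appears. That case still works because $(n-2)\cdot(n-2)^{-1}=1$, and it is consistent with Cayley's formula since $\tau_0=N^{N-2}$. The bijectivity of removing the leaf $1$ is routine by comparison.
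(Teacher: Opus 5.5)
Your proposal is correct and follows essentially the same route as the paper. The paper also fixes the root $n-k$, chooses the root's sons among the $n-k-2$ allowed vertices, and counts the remaining forest on $\{2,\ldots,n\}\setminus\{n-k\}$ by Theorem~\ref{Thm:forest}. It then multiplies by the $n-\ell-1$ attachment choices for the leaf $1$, collapses the binomial sum to $(n-k-2)(n-2)^{k-1}(n-1)^{n-k-3}$, and finishes with Proposition~\ref{prop:identity001}. Your order (removing the leaf $1$ first, then conditioning on the neighbour set of the root) is only a cosmetic rearrangement.
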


\begin{proof}
By definition, $\mathbf{B} = \{T^{\prime}\in \Bn^{1\nsim F_{\ell}} \colon \root(T^{\prime})= r^{\prime}\in F_{\ell} \text{ and } r^{\prime} \nsim j,  \forall j > r^{\prime}\}$.
Recall that $\mathbf{B}\subseteq \UnFl$, and that if $T^{\prime}\in\mathbf{B}$ then its preimage under $\Psi$ coincides with $T^{\prime}$.

Fix $T^{\prime}\in \mathbf{B}$ with root $r^{\prime} = n-k$, where $0\le k\le \ell-1$.  
By definition, the root $r^{\prime}$ is not adjacent to any vertex in $\{n-k+1,\, n-k+2,\, \dots,\, n\}$.
We now count the number of such trees $T^{\prime}$ for a fixed root $r^{\prime} = n-k$.
Let $\son_{T^{\prime}}(r^{\prime})$ denote the set of all the sons of $r^{\prime}$ in $T^{\prime}$, and let $|\son_{T^{\prime}}(r^{\prime})| = s$.
By definition, $\son_{T^{\prime}}(r^{\prime})\subseteq \{2, 3, \dots, n-k-1\}$.
Therefore, the set $\son_{T^{\prime}}(r^{\prime})$ can be chosen in $\binom{n-k-2}{s}$ ways.
Deleting the vertices $r^{\prime}$ and $1$ from $T^{\prime}$ yields a forest with vertex set $\{2, 3, \dots, n\} \setminus \{r^{\prime}\}$ and exactly $s$ connected components, in which the vertices in $\son_{T^{\prime}}(r^{\prime})$ belong to distinct components.
By Theorem~\ref{Thm:forest}, the number of such forests 
is $s (n-2)^{n-s-3}$.
Finally, the vertex $1$ may be attached as a leaf to any vertex in $\{2, 3, \dots, n-\ell\}$, giving $n-\ell-1$ choices.
Hence, for a fixed $k$, the number of uprooted trees $T^{\prime}$ with vertex set $[n]$ and root $r^{\prime}=n-k$,
in which the leaf $1$ is not adjacent to any vertex in $F_{\ell}$, is given by 
\[
\sum_{s=1}^{n-k-2} \binom{n-k-2}{s}~ s ~(n-2)^{n-s-3} ~ (n-\ell-1).
\]
Since $0 \le k \le \ell-1$, summing over all such $k$ yields
\begin{align*}
|\mathbf{B}| &  = \sum_{k=0}^{\ell -1}\sum_{s=1}^{n-k-2}\binom{n-k-2}{s}~s~(n-2)^{n-s-3}~(n-\ell -1)\\
& = (n- \ell -1) \sum_{k=0}^{\ell -1}(n-k-2)\left(\sum_{s=1}^{n-k-2}\binom{n-k-3}{s-1}~(n-2)^{n-s-3}\right)\\
& \hspace{9cm} (\text{By setting } s-1=j)\\
& = (n- \ell -1) \sum_{k=0}^{\ell -1}(n-2)^{k-1}~(n-k-2)\left(\sum_{j=0}^{n-k-3}\binom{n-k-3}{j}~(n-2)^{n-k-3-j}\right).
\end{align*}
Therefore,
\begin{equation}\label{eq:|B|}
 |\mathbf{B}|  = (n- \ell -1) \left(\sum_{k=0}^{\ell -1}(n-2)^{k-1}~(n-k-2)~(n-1)^{n-k-3}\right).
\end{equation}
To evaluate the above sum, we apply Proposition~\ref{prop:identity001}. 
Combining the identity~\eqref{eq:|B|.2} and \eqref{eq:|B|}, we obtain 
\[
|\mathbf{B}| = (n-\ell - 1)\,(n-1)^{\,n-\ell -2}\,(n-2)^{\,\ell -1}\,\ell.
\]
This completes the proof.
\end{proof}

\begin{lemma}\label{lemma:|B'|}
For $n\geq 3$ and $1\leq \ell\leq n-2$, we have 
\[
|\mathbf{B}^{\prime}| = (n-\ell-1)^{2}~(n-\ell -2)~(n-2)^{\ell -1}(n-1)^{n-\ell -3}.
\]
\end{lemma}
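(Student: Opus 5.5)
We count $\mathbf{B}'$ directly, following the same scheme as the proof of Lemma~\ref{lemma:|B|}: first choose the root, then build the tree on $\{2,\dots,n\}$ by choosing the sons of the root and a forest below them, and finally attach the leaf $1$. The point that makes this work is that deleting the leaf $1$ from $T'\in\mathbf{B}'$ leaves a rooted tree on $\{2,3,\dots,n\}$ with root $r'$, and all three defining conditions of $\mathbf{B}'$ separate cleanly:
\begin{itemize}
\item $r'\in\{2,\dots,n-\ell\}$;
\item $\son_{T'}(r')\cap F_\ell=\emptyset$;
\item $\Par_{T'}(1)\in\{2,\dots,n-\ell\}$.
\end{itemize}
There is no increasing-path condition here, unlike $\mathbf{B}''$.

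\emph{Step 1 (root).} Fix $r'\in\{2,3,\dots,n-\ell\}$; this gives $n-\ell-1$ choices.

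\emph{Step 2 (tree on $\{2,\dots,n\}$).} Count rooted trees on $\{2,\dots,n\}$ with root $r'$ whose root has no son in $F_\ell$. Let $s=|\son(r')|$. Then $\son(r')\subseteq\{2,\dots,n-\ell\}\setminus\{r'\}$, a set of size $n-\ell-2$, so the sons can be chosen in $\binom{n-\ell-2}{s}$ ways. Deleting $r'$ leaves a forest on the remaining $n-2$ vertices with $s$ components, the chosen sons lying in distinct components. By Theorem~\ref{Thm:forest} there are $s(n-2)^{n-s-3}$ such forests. Conversely, each such forest together with the chosen sons determines the tree uniquely.

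\emph{Step 3 (leaf $1$).} Attach $1$ as a leaf to any vertex of $\{2,\dots,n-\ell\}$. This gives $n-\ell-1$ choices, and $r'$ itself is allowed; attaching $1$ there does not create an edge from $r'$ to $F_\ell$. Every tree in $\mathbf{B}'$ arises exactly once this way.

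\emph{Step 4 (evaluate the sum).} This gives
\[
|\mathbf{B}'|=(n-\ell-1)^2\sum_{s=1}^{n-\ell-2}\binom{n-\ell-2}{s}\,s\,(n-2)^{n-s-3}.
\]
Use $s\binom{n-\ell-2}{s}=(n-\ell-2)\binom{n-\ell-3}{s-1}$ and set $j=s-1$. Writing $m=n-\ell-3$, we have $(n-2)^{n-j-4}=(n-2)^{\ell-1}(n-2)^{m-j}$. The binomial theorem then gives
\[
\sum_{j=0}^{m}\binom{m}{j}(n-2)^{m-j}=(n-1)^{m},
\]
and so
\[
|\mathbf{B}'|=(n-\ell-1)^2(n-\ell-2)(n-2)^{\ell-1}(n-1)^{n-\ell-3}.
\]

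The only delicate point is the boundary case $\ell=n-2$. There the root $r'=2$ would have no admissible sons while $n-1\ge 2$ vertices remain, so $\mathbf{B}'=\emptyset$, and the formula correctly gives $0$ through the factor $n-\ell-2$. I would handle this case separately so as not to rely on the convention for the exponent $n-\ell-3=-1$. Otherwise the argument is a routine adaptation of the proof of Lemma~\ref{lemma:|B|}, and I expect no real obstacle beyond checking that the root condition and the condition on the leaf $1$ are independent.
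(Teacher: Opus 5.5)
Your proposal is correct and follows the paper's proof essentially step for step. The paper likewise strips the leaf $1$ to get a factor $(n-\ell-1)$, counts rooted trees on $\{2,\dots,n\}$ with root in $\{2,\dots,n-\ell\}$ and no son in $F_\ell$ as $(n-\ell-1)\binom{n-\ell-2}{s}\,s\,(n-2)^{n-s-3}$ via Theorem~\ref{Thm:forest}, and finishes with the same binomial identity. Your separate treatment of $\ell=n-2$, where the sum is empty, is a small piece of extra care that the paper omits.
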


\begin{proof}
Let $\widetilde{\mathbf{B}}^{\prime}$ be the set of rooted trees with vertex set $\{2,3,\dots,n\}$ and root $r^{\prime}\in \{2,3,\dots,n-\ell\}$ such that $r^{\prime}\nsim v$ for all $v\in F_{\ell}$.
Observe that any tree in $\mathbf{B}^{\prime}$ can be obtained from a tree in 
$\widetilde{\mathbf{B}}^{\prime}$ by adjoining the vertex $1$ as a leaf to a vertex 
$i \in \{2, 3, \dots , n-\ell\}$ of the chosen tree.  
Since there are exactly $(n-\ell-1)$ choices for such a vertex $i$, we obtain
\begin{equation}\label{eq:|B'|.1}
|\mathbf{B}^{\prime}| = (n-\ell-1)\,|\widetilde{\mathbf{B}}^{\prime}|.
\end{equation}

Therefore, to determine the cardinality of $\mathbf{B}^{\prime}$, it suffices to count the elements of $\widetilde{\mathbf{B}}^{\prime}$.
Let $\widetilde{T}^{\prime}\in\widetilde{\mathbf{B}}^{\prime}$ with $\root{(\widetilde{T}^{\prime})} = r^{\prime}$, and
let $\son_{\widetilde{T}^{\prime}}{(r^{\prime})} = \{a_1 , a_2 , \dots , a_s\}$ be the set of sons of $r^{\prime}$ in $\widetilde{T}^{\prime}$.
Then $\widetilde{T}^{\prime}$ has the structure shown in Figure~\ref{fig:tildeT'}, where for each $1 \le i \le s$, $\F_{a_i}$ denotes the rooted forest consisting of all descendants of $a_i$, with the roots of its component trees adjacent to $a_i$.

\begin{figure}[ht]
\centering
\begin{tikzpicture}[scale=1]

\node[circle, fill=black, inner sep=1pt] (r) at (0,0) {};
\node[above] at (r) {$r'$};

\node[circle, fill=black, inner sep=1pt] (a1) at (-4,-1.5) {};
\node[above] at (a1) {$a_1$};

\node[circle, fill=black, inner sep=1pt] (a2) at (-2,-1.5) {};
\node[above] at (a2) {$a_2$};

\node at (0,-1.5) {$\dots$};

\node[circle, fill=black, inner sep=1pt] (as) at (2,-1.5) {};
\node[above] at (as) {$a_s$};

\draw (r) -- (a1);
\draw (r) -- (a2);
\draw (r) -- (as);

\node[draw, rectangle, minimum width=1cm, minimum height=0.5cm] (F1) at (-4,-3) {$\F_{a_1}$};
\node[draw, rectangle, minimum width=1cm, minimum height=0.5cm] (F2) at (-2,-3) {$\F_{a_2}$};
\node[draw, rectangle, minimum width=1cm, minimum height=0.5cm] (Fs) at (2,-3) {$\F_{a_s}$};

\draw (a1) -- (F1);
\draw (a2) -- (F2);
\draw (as) -- (Fs);

\end{tikzpicture}
\caption{General structure of a tree $\widetilde{T}^{\prime}\in \widetilde{\mathbf{B}}^{\prime}$ with 
$\son_{\widetilde{T}^{\prime}}(r^{\prime}) = \{a_1, a_2, \dots, a_s\}$.  
Each rectangle $\F_{a_i}$ represents the rooted forest of descendants of $a_i$.}
\label{fig:tildeT'}
\end{figure}

By definition, $\son_{\widetilde{T}^{\prime}}{(r^{\prime})} \subseteq \{2,3, \dots , n-\ell\}\setminus\{r^{\prime}\}$. 
Hence, the set $\son_{\widetilde{T}^{\prime}}(r^{\prime})$ can be chosen in $\binom{n-\ell-2}{s}$ ways.
Since $r^{\prime}\in\{2,3,\dots,n-\ell\}$, there are $n-\ell-1$ choices for the root $r^{\prime}$.
Moreover, $\widetilde{T}^{\prime}-\{r^{\prime}\}$ is a rooted forest with vertex set $\{2,3,\dots,n\}\setminus\{r^{\prime}\}$ and $s$ connected components (see Figure~\ref{fig:tildeT'.1}).
By Theorem~\ref{Thm:forest}, the number of such forests is $s(n-2)^{n-s-3}$.
Therefore,
\[
|\widetilde{\mathbf{B}}^{\prime}| = \sum_{s=1}^{n-\ell -2}\binom{n-\ell-2}{s}~s~(n-2)^{n-s-3}~(n-\ell-1).
\]
As $\binom{n-\ell-2}{s}~s=(n-\ell-2)~\binom{n-\ell -3}{s-1}$, we have 
\begin{equation}\label{eq:|tildeB'|}
|\widetilde{\mathbf{B}}^{\prime}| = (n-\ell-1)~(n-\ell-2)\sum_{s=1}^{n-\ell -2}\binom{n-\ell-3}{s-1}~(n-2)^{n-s-3}.
\end{equation}

\begin{figure}[ht]
\centering
\begin{tikzpicture}[scale=1]

\node[circle, fill=black, inner sep=1pt] (a1) at (-4,-1.5) {};
\node[above] at (a1) {$a_1$};

\node[circle, fill=black, inner sep=1pt] (a2) at (-2,-1.5) {};
\node[above] at (a2) {$a_2$};

\node at (0,-1.5) {$\dots$};

\node[circle, fill=black, inner sep=1pt] (as) at (2,-1.5) {};
\node[above] at (as) {$a_s$};

\node[draw, rectangle, minimum width=1cm, minimum height=0.5cm] (F1) at (-4,-3) {$\F_{a_1}$};
\node[draw, rectangle, minimum width=1cm, minimum height=0.5cm] (F2) at (-2,-3) {$\F_{a_2}$};
\node[draw, rectangle, minimum width=1cm, minimum height=0.5cm] (Fs) at (2,-3) {$\F_{a_s}$};

\draw (a1) -- (F1);
\draw (a2) -- (F2);
\draw (as) -- (Fs);

\end{tikzpicture}
\caption{The forest $\widetilde{T}^{\prime}-\{r^{\prime}\}$, where  $\widetilde{T}^{\prime}\in \widetilde{\mathbf{B}}^{\prime}$ and
$\son_{\widetilde{T}^{\prime}}(r^{\prime}) = \{a_1, a_2, \dots, a_s\}$.} 
\label{fig:tildeT'.1}
\end{figure}

Thus, by combining \eqref{eq:|B'|.1} and \eqref{eq:|tildeB'|},  we have
\begin{align*}
|\mathbf{B}^{\prime}|&  = (n-\ell-1)^{2}~(n-\ell-2) \left(\sum_{s=1}^{n-\ell -2}\binom{n-\ell-3}{s-1}~(n-2)^{n-s-3}\right)\\
& \hspace{9cm} (\text{By setting } s-1=j)\\
& = (n-\ell-1)^{2}~ (n-\ell-2)~(n-2)^{\ell -1}\left(\sum_{j=0}^{n-\ell -3}\binom{n-\ell-3}{j}~(n-2)^{n-\ell -3-j} \right)\\
& = (n-\ell-1)^{2}~(n-\ell -2)~(n-2)^{\ell -1}(n-1)^{n-\ell -3}.
\end{align*}
This concludes the proof.
\end{proof}

\begin{lemma}\label{lemma:|B"|}
For $n\geq 3$ and $1\leq \ell \leq n-2$, we have 
\[
|\mathbf{B}''|= (n-\ell-1)~(n-\ell-2)~(n-2)^{\ell-1}~(n-1)^{n-\ell -3}~\ell.
\]  
\end{lemma}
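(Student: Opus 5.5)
The plan is to prove the formula by a direct count built on the characterization of $\mathbf{B}''$ in Proposition~\ref{prop:B"}. I will first fix the data that the proposition singles out: the root $r'\in\{2,\dots,n-\ell\}$ ($n-\ell-1$ choices) and the unique vertex $v\in F_\ell$ adjacent to $r'$ ($\ell$ choices). The target formula suggests the count should factor accordingly, because
\[
|\mathbf{B}''| \;=\; \ell\cdot\frac{|\mathbf{B}'|}{n-\ell-1}\;=\;\ell\cdot|\widetilde{\mathbf{B}}'| ,
\]
where $\widetilde{\mathbf{B}}'$ is the auxiliary set from the proof of Lemma~\ref{lemma:|B'|}. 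So I aim for a bijection $\mathbf{B}''\to F_\ell\times\widetilde{\mathbf{B}}'$ sending $T'$ to $(v,\widetilde T)$, in the spirit of the relabelling arguments used for $\Psi$.

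The candidate map is as follows. Given $T'\in\mathbf{B}''$, I delete the edge $r'v$ and delete the leaf $1$. Since $\Par_{T'}(1)=p$ lies in $\{2,\dots,n-\ell\}$, I then re-hang the subtree rooted at $v$ below $p$ (when $1$ is a descendant of $v$ this must be modified, e.g.\ by a rerooting along the path from $v$ to $p$). The result is a tree $\widetilde T$ on $\{2,\dots,n\}$ whose root $r'$ is small and is no longer adjacent to $F_\ell$, so $\widetilde T\in\widetilde{\mathbf{B}}'$.

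The essential step is checking that this is a bijection. Injectivity has to recover the position of $1$ and of $v$'s old attachment, and surjectivity must hold even when $v$'s parent in $\widetilde T$ lies in $F_\ell$. These two requirements are exactly where the two defining conditions of $\mathbf{B}''$ should enter:
\begin{itemize}
\item the absence of increasing paths $r'\to b_1\to\cdots\to b_t\to\gamma$ with $\gamma<v$;
\item the alternative ``$1$ descends from $v$, or from some $\rho>v$ reached by an increasing path through small vertices.''
\end{itemize}
Concretely, I would let the maximal increasing subtree of $\widetilde T$ at $r'$, together with the relative position of $v$ among the $F_\ell$-labels reachable from it, decide which of the two cases to undo. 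This mirrors Cases~1 and~2 in the proof of Proposition~\ref{prop:B"}.

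I expect this verification to be the main obstacle. The conditions in Proposition~\ref{prop:B"} are path conditions relative to the unknown $v$, and the re-hanging changes which increasing paths exist, so the case analysis may not close cleanly. If it does not, I would fall back to a weighted count. For fixed $(r',v)$ I would sum, over trees on $\{2,\dots,n\}$ in which $r'$ meets $F_\ell$ only at $v$, the number of admissible attachment points for $1$. I would organize that sum by the set of sons of $r'$ in $\{2,\dots,n-\ell\}$, using Theorem~\ref{Thm:forest} as in Lemmas~\ref{lemma:|B|} and~\ref{lemma:|B'|}, and collapse the resulting binomial sums with the binomial theorem to powers of $n-1$ and $n-2$.

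As a last resort, the count can be obtained by subtraction. The matrix tree theorem computation of $|\UnFl|$ (Section~\ref{sec:Enumeration by MTT}) is independent of this section, and Remark~\ref{rmrk:Psi} gives $|\mathbf{B}''|=|\UnFl|-|\mathbf{B}|-|\mathbf{B}'|$. The required algebra is the identity
\[
(n-1)(n-2)-\ell(n-1)-(n-\ell-1)(n-\ell-2)=\ell(n-\ell-2),
\]
which I have checked and which yields exactly the stated formula.
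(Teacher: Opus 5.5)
Your last route, the subtraction, is a complete and correct proof, and it differs genuinely from the paper's. Because $\mathbf{B}''$ is \emph{defined} as $\Psi(\UnFl)\setminus(\mathbf{B}\coprod\mathbf{B}^{\prime})$, Lemma~\ref{lemma:BuB'} and the bijectivity of $\Psi$ give $|\mathbf{B}''|=|\UnFl|-|\mathbf{B}|-|\mathbf{B}^{\prime}|$ directly, without needing Proposition~\ref{prop:B"}. The matrix tree theorem value of $|\UnFl|$ in Section~\ref{sec:Enumeration by MTT} borrows from Section~\ref{sec:Counting Uprooted Trees} only the identity~\eqref{eq:|B|.2}, which is proved independently, so nothing is circular. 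Your identity then finishes the computation.

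The paper instead builds a bijection $\tau\colon\coprod_{v\in F_\ell}\mathcal{T}_v\to\mathbf{B}''$. Here $\mathcal{T}_v$ replaces the global increasing-path conditions of Proposition~\ref{prop:B"} by conditions local at $v$: the root is a vertex of $\{2,\dots,n-\ell\}$ adjacent to $v$, every neighbour of $v$ lies in $\{2,\dots,n-\ell\}$, and $1$ lies below $v$ without being adjacent to it. It then counts $|\mathcal{T}_v|$ by rerooting at $v$ and applying Theorem~\ref{Thm:forest}. The factor $\ell$ comes from swapping the labels of $v,v'\in F_\ell$. In particular $|\mathcal{T}_v|=|\widetilde{\mathbf{B}}^{\prime}|$, which explains your observation $|\mathbf{B}''|=\ell\,|\widetilde{\mathbf{B}}^{\prime}|$.

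What each route buys: your subtraction is much shorter and sidesteps the long, only sketched reversibility argument for $\tau$. However, it makes the lemma a corollary of Theorem~\ref{thm:|Un|}. The lemma's only job in the paper is to feed the Section~\ref{sec:Counting Uprooted Trees} proof of Theorem~\ref{thm:|Un|}. Plugging your proof in there would make that argument circular and lose the paper's point of a combinatorial derivation independent of the matrix tree theorem.

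Your first two routes, on the other hand, are not proofs as stated.

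\textbf{The bijection.} Take the natural completion of your map: when $1$ lies below $v$, reroot the component of $v$ at $p=\Par_{T'}(1)$ and hang $p$ from $r'$. This map is not onto $F_\ell\times\widetilde{\mathbf{B}}^{\prime}$, and since the two sets have equal size, it is not injective either. For $n=6$, $\ell=2$, $v=6$, take $\widetilde{T}$ with edges $2\to3$, $3\to5$, $2\to4$, $4\to6$. The only two candidate preimages both contain the increasing path $2\to3\to5$ with $5<6$. Indeed $\Psi^{-1}$ makes $1$ adjacent to $6$, respectively $5$, so neither lies in $\mathbf{B}''$.

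\textbf{The weighted count.} The number of admissible positions for $1$ depends on the maximal increasing subtree at $r'$ and on the position of $v$ among the $F_\ell$-labels in it, not just on the son set of $r'$. So Theorem~\ref{Thm:forest} cannot be applied as in Lemmas~\ref{lemma:|B|} and~\ref{lemma:|B'|}. Removing exactly this obstruction is what the passage to $\mathcal{T}_v$ achieves.
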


\begin{proof}
As a first step in computing the cardinality of $\mathbf{B}''$, for each $v\in F_{\ell}$ we define a subset $\mathcal{T}_v$ of $\Bn$.
The set $\mathcal{T}_v$ consists of all trees in $\Bn$ whose root lies in $\{2,3,\dots,n-\ell\}$ and is adjacent to $v$, where $v$ is not adjacent to any other vertex in $F_{\ell}$, and the vertex $1$ is a descendant of $v$ but not adjacent to it.
That is, for $v \in F_{\ell}$, let
%
\[
\mathcal{T}_{v}
=
\left\{\, T'' \in \Bn \;\colon\;
\begin{aligned}
&\root(T'') = r'' \in \{2,3,\ldots,n-\ell\},\ 
r'' \sim v,\\
& v \nsim w \text{ for all } w \in F_{\ell}\setminus\{v\}, 1 \text{ is a descendant of } v,
\text{ and } 1 \nsim v
\end{aligned}
\,\right\}.
\]

We recall that there exists a bijection $\Psi:\Un \to \Bn$.
We now define a map
\begin{equation}\label{map:B''}
\tau:\coprod_{v\in F_{\ell}} \mathcal{T}_v \longrightarrow \mathbf{B}'',
\end{equation}
and prove that $\tau$ is a bijection. 
Since $\mathbf{B}''\subseteq \Psi(\UnFl)$, we show that every $T''\in  \coprod_{v\in F_{\ell}}\mathcal{T}_{v}$ is mapped by $\tau$ to a unique tree
$T^{\prime}\in \mathbf{B}''$, where $T^{\prime}=\Psi(T)$ for some $T\in \UnFl$.

The construction of the map~\eqref{map:B''} is carried out sequentially.
Let $T'' \in \coprod_{v\in F_{\ell}} \mathcal{T}_{v}$.
If $T'' \in \mathbf{B}''$, then $\tau$ fixes $T''$, i.e., $\tau(T'') = T''$.
Equivalently, if $T'' = \Psi(T)$ for some $T \in \UnFl$, then we set $\tau(T'') = T''$.
Now suppose that $T'' \neq \Psi(T)$ for any $T \in \UnFl$.
Let $T'' \in \mathcal{T}_{v}$ with $\root(T'') = r''$.
We proceed through the following cases in order.
\begin{enumerate}
    \item \label{a} If $1 \nsim w$ for all $w \in F_{\ell} \setminus \{v\}$, then we move directly to case~\eqref{b}.
    Otherwise, suppose that $1$ is adjacent to a vertex $\rho \in F_{\ell} \setminus \{v\}$.
    Consider the unique path from $v$ to $1$ in the tree $T''$, and let $\widetilde{a}$ be the son of $v$ lying on this path.
    Then necessarily $\rho = \Par_{T''}(1)$.
    We obtain a tree $T''_{1}$ from $T''$ by interchanging the labels of the vertices $\widetilde{a}$ and $\rho$ (see Figure~\ref{fig:T1''}).
    If $T''_{1} \in \Psi(\UnFl)$, then we define $\tau(T'') = T''_{1}$.
    Otherwise, replace $T''$ by $T''_{1}$ (keeping the same root $r''$), and proceed to case~\eqref{b}.

\begin{figure}[ht]
\centering
\begin{tikzpicture}[scale=1]

\node[circle, fill=black, inner sep=1pt] (r) at (-1,0) {};
\node[above] at (r) {$r''$};

\node[draw, rectangle, minimum width=1cm, minimum height=0.5cm] (Fr) at (1,-1.5) {$\F_{r''}$};

\draw (r) -- (Fr);

\node[circle, fill=black, inner sep=1pt] (v) at (-3,-1.5) {};
\node[above] at (v) {$v$};

\draw (r) -- (v);

\node[draw, rectangle, minimum width=1cm, minimum height=0.5cm] (Fv) at (-4.5,-2) {$\F_{v}$};
\draw (v) -- (Fv);

\node[circle, fill=black, inner sep=1pt] (a) at (-3,-3 ) {};
\node[right] at (a) {$\widetilde{a}$};
\draw (v) -- (a);

\node[draw, rectangle, minimum width=1cm, minimum height=0.5cm] (Fa) at (-4.5,-4) {$\F_{\widetilde{a}}$};
\draw (a) -- (Fa);

\node[circle, fill=black, inner sep=1pt] (b) at (-3,-4 ) {};
\draw (a) -- (b);


\node[circle, fill=black, inner sep=1pt] (rho) at (-3,-5 ) {};
\node[right] at (rho) {$\rho$};

\node[circle, fill=black, inner sep=1pt] (1) at (-4,-6 ) {};
\node[left] at (1) {$1$};
\draw (rho) -- (1);

\node[draw, rectangle, minimum width=1cm, minimum height=0.5cm] (Frho) at (-2,-6) {$\F_{\rho}$};
\draw (rho) -- (Frho);

\draw[dashed] (b)--(rho);

\node at (3,-3) {$\longrightarrow$};

\node[right] at (-1,-7) {$T''$};

\node[right] at (8,-7) {$T_{1}''$};

\node[circle, fill=black, inner sep=1pt] (r1) at (8,0) {};
\node[above] at (r1) {$r''$};

\node[draw, rectangle, minimum width=1cm, minimum height=0.5cm] (Fr1) at (10,-1.5) {$\F_{r''}$};

\draw (r1) -- (Fr1);

\node[circle, fill=black, inner sep=1pt] (v1) at (6,-1.5) {};
\node[above] at (v1) {$v$};

\draw (r1) -- (v1);

\node[draw, rectangle, minimum width=1cm, minimum height=0.5cm] (Fv1) at (4.5,-2) {$\F_{v}$};
\draw (v1) -- (Fv1);

\node[circle, fill=black, inner sep=1pt] (a1) at (6,-3 ) {};
\node[right] at (a1) {$\rho$};
\draw (v1) -- (a1);

\node[draw, rectangle, minimum width=1cm, minimum height=0.5cm] (Fa1) at (4.6,-4) {$\F_{\widetilde{a}}$};
\draw (a1) -- (Fa1);

\node[circle, fill=black, inner sep=1pt] (b1) at (6,-4 ) {};
\draw (a1) -- (b1);

\node[circle, fill=black, inner sep=1pt] (rho1) at (6,-5 ) {};
\node[right] at (rho1) {$\widetilde{a}$};

\node[circle, fill=black, inner sep=1pt] (11) at (5,-6 ) {};
\node[left] at (11) {$1$};
\draw (rho1) -- (11);

\node[draw, rectangle, minimum width=1cm, minimum height=0.5cm] (Frho1) at (7,-6) {$\F_{\rho}$};
\draw (rho1) -- (Frho1);

\draw[dashed] (b1)--(rho1);

\end{tikzpicture}
\caption{Construction of $T_{1}''$ from the tree $T''$.} 
\label{fig:T1''}
\end{figure}

    \item \label{b} If $r'' \nsim w$ for all $w \in F_{\ell} \setminus \{v\}$, then proceed to case~\eqref{c}.
    Otherwise, suppose that $r''$ is adjacent to a non-empty set of vertices $\{w_1, w_2, \dots, w_m\} \subseteq F_{\ell} \setminus \{v\}$.
    For each $1 \le i \le m$, remove the vertex $w_i$ together with its descendant forest $\F_{w_i}$ from $r''$, and reattach $w_i$ (together with $\F_{w_i}$) as a son of the vertex $v$.
    We denote the resulting tree by $T''_{2}$ (see Figure~\ref{fig:T2''}).
    If $T''_{2} \in \Psi(\UnFl)$, then we define $\tau(T'') = T''_{2}$.
    Otherwise, replace $T''$ by $T''_{2}$ (keeping the same root $r''$), and proceed to case~\eqref{c}.

\begin{figure}[ht]
\centering
\begin{tikzpicture}[scale=0.9]

\node[circle, fill=black, inner sep=1pt] (r) at (-1,0) {};
\node[above] at (r) {$r''$};

\node[draw, rectangle, minimum width=1cm, minimum height=0.5cm] (Fr) at (1.5,-1.5) {$\F_{r''}$};

\draw (r) -- (Fr);

\node[circle, fill=black, inner sep=1pt] (v) at (-5,-1.5) {};
\node[left] at (v) {$v$};

\draw (r) -- (v);

\node[draw, rectangle, minimum width=1cm, minimum height=0.5cm] (Fv) at (-5,-3 ) {$\F_{v}$};
\draw (v) -- (Fv);

\node[circle, fill=black, inner sep=1pt] (w1) at (-3,-1.5) {};
\node[left] at (w1) {$w_1$};

\draw (r) -- (w1);

\node[draw, rectangle, minimum width=1cm, minimum height=0.5cm] (Fw1) at (-3,-3 ) {$\F_{w_1}$};
\draw (w1) -- (Fw1);

\node at (-1.5,-1.5) {$\cdots$};

\node[circle, fill=black, inner sep=1pt] (wm) at (-0,-1.5) {};
\node[left] at (wm) {$w_m$};

\draw (r) -- (wm);

\node[draw, rectangle, minimum width=1cm, minimum height=0.5cm] (Fwm) at (0,-3 ) {$\F_{w_m}$};
\draw (wm) -- (Fwm);

\node at (2.5,-3) {$\longrightarrow$};

\node[right] at (-2,-5) {$T''$};

\node[right] at (8,-5) {$T_{2}''$};

\node[circle, fill=black, inner sep=1pt] (r1) at (8,0) {};
\node[above] at (r1) {$r''$};

\node[draw, rectangle, minimum width=1cm, minimum height=0.5cm] (Fr1) at (10,-1.5) {$\F_{r''}$};

\draw (r1) -- (Fr1);

\node[circle, fill=black, inner sep=1pt] (v1) at (6,-1.5) {};
\node[above] at (v1) {$v$};

\draw (r1) -- (v1);

\node[draw, rectangle, minimum width=1cm, minimum height=0.5cm] (Fv1) at (4.5,-2) {$\F_{v}$};
\draw (v1) -- (Fv1);

\node[circle, fill=black, inner sep=1pt] (w11) at (5.5,-3 ) {};
\node[left] at (w11) {$w_1$};
\draw (v1) -- (w11);

\node[draw, rectangle, minimum width=1cm, minimum height=0.5cm] (Fw11) at (5.5,-4 ) {$\F_{w_1}$};
\draw (w11) -- (Fw11);

\node at (6.7,-3) {$\cdots$};

\node[circle, fill=black, inner sep=1pt] (wm1) at (8,-3 ) {};
\node[right] at (wm1) {$w_{m}$};
\draw (v1) -- (wm1);

\node[draw, rectangle, minimum width=1cm, minimum height=0.5cm] (Fwm1) at (8,-4) {$\F_{w_m}$};
\draw (wm1) -- (Fwm1);

\end{tikzpicture}
\caption{Construction of $T_{2}''$ from the tree $T''$.} 
\label{fig:T2''}
\end{figure}

    \item \label{c} Consider the subtree of $T''$ obtained by deleting the vertex $v$ together with its descendant forest $\F_v$.
    Suppose that the resulting subtree contains an increasing path of the form
    \begin{equation*}\label{T''_case_c}
    r'' \longrightarrow c_1 \longrightarrow c_2 \longrightarrow \cdots \longrightarrow c_t \longrightarrow w,
    \end{equation*}
    where $c_j \in \{2,3,\dots,n-\ell\}\setminus\{r''\}$ for all $1 \le j \le t$, and
    $w \in F_{\ell}\setminus\{v\}$ satisfies $w < v$.
    Without loss of generality, choose such a path for which $w$ is minimal.
    We now construct a tree $T_{3}''$ from $T''$ as follows.
    Remove the vertices $w$ and $v$ together with their descendant forests $\F_w$ and $\F_v$, respectively.
    Then reattach $w$ (together with $\F_w$) as a son of the root $r''$ and $v$ (together with $\F_v$) as a son of the vertex $c_t$ (see Figure~\ref{fig:T3''}).
    Since $T_{3}'' \in \mathbf{B}'' \subseteq \Psi(\UnFl)$, we define $\tau(T'') = T_{3}''$.

\begin{figure}[ht]
\centering
\begin{tikzpicture}[scale=1]

\node[circle, fill=black, inner sep=1pt] (r) at (-1,0) {};
\node[above] at (r) {$r''$};

\node[draw, rectangle, minimum width=1cm, minimum height=0.5cm] (Fr) at (1.5,-1.5) {$\F_{r''}$};

\draw (r) -- (Fr);

\node[circle, fill=black, inner sep=1pt] (v) at (-4,-1.5) {};
\node[left] at (v) {$v$};

\draw (r) -- (v);

\node[draw, rectangle, minimum width=1cm, minimum height=0.5cm] (Fv) at (-4,-3 ) {$\F_{v}$};
\draw (v) -- (Fv);

\node[circle, fill=black, inner sep=1pt] (c1) at (-1.5,-1.5) {};
\node[left] at (c1) {$c_1$};

\draw (r) -- (c1);

\node[draw, rectangle, minimum width=1cm, minimum height=0.5cm] (Fc1) at (-0.5,-2 ) {$\F_{c_1}$};
\draw (c1) -- (Fc1);

\node[circle, fill=black, inner sep=1pt] (c2) at (-1.5,-2.5) {};
\node[left] at (c2) {$c_2$};

\draw (c1) -- (c2);

\node[draw, rectangle, minimum width=1cm, minimum height=0.5cm] (Fc2) at (-0.5,-3 ) {$\F_{c_2}$};
\draw (c2) -- (Fc2);


\node[circle, fill=black, inner sep=1pt] (ct) at (-1.5,-3.5) {};
\node[left] at (ct) {$c_t$};

\node[draw, rectangle, minimum width=1cm, minimum height=0.5cm] (Fct) at (-0.5,-4 ) {$\F_{c_t}$};
\draw (ct) -- (Fct);

\node[circle, fill=black, inner sep=1pt] (w) at (-1.5,-4.5) {};
\node[left] at (w) {$w$};

\draw (ct) -- (w);

\node[draw, rectangle, minimum width=1cm, minimum height=0.5cm] (Fw) at (-1.5,-5.5 ) {$\F_{w}$};
\draw (w) -- (Fw);

\draw[dashed] (c2)--(ct);

\node at (3,-3) {$\longrightarrow$};

\node[right] at (-2,-7) {$T''$};


\node[circle, fill=black, inner sep=1pt] (r1) at (8,0) {};
\node[above] at (r1) {$r''$};

\node[draw, rectangle, minimum width=1cm, minimum height=0.5cm] (Fr1) at (10,-1.5) {$\F_{r''}$};

\draw (r1) -- (Fr1);

\node[circle, fill=black, inner sep=1pt] (w1) at (5,-1.5) {};
\node[above] at (w1) {$w$};

\draw (r1) -- (w1);

\node[draw, rectangle, minimum width=1cm, minimum height=0.5cm] (Fw1) at (5,-3) {$\F_{w}$};
\draw (w1) -- (Fw1);

\node[circle, fill=black, inner sep=1pt] (c11) at (7,-1.5) {};
\node[left] at (c11) {$c_1$};

\draw (r1) -- (c11);

\node[draw, rectangle, minimum width=1cm, minimum height=0.5cm] (Fc11) at (8,-2 ) {$\F_{c_1}$};
\draw (c11) -- (Fc11);

\node[circle, fill=black, inner sep=1pt] (c21) at (7,-2.5) {};
\node[left] at (c21) {$c_2$};

\draw (c11) -- (c21);

\node[draw, rectangle, minimum width=1cm, minimum height=0.5cm] (Fc21) at (8,-3 ) {$\F_{c_2}$};
\draw (c21) -- (Fc21);

\node[circle, fill=black, inner sep=1pt] (ct1) at (7,-3.5) {};
\node[left] at (ct1) {$c_t$};

\node[draw, rectangle, minimum width=1cm, minimum height=0.5cm] (Fct1) at (8,-4 ) {$\F_{c_t}$};
\draw (ct1) -- (Fct1);

\node[circle, fill=black, inner sep=1pt] (v1) at (7,-4.5) {};
\node[left] at (v1) {$v$};

\draw (ct1) -- (v1);

\node[draw, rectangle, minimum width=1cm, minimum height=0.5cm] (Fv1) at (7,-5.5 ) {$\F_{v}$};
\draw (v1) -- (Fv1);

\draw[dashed] (c21)--(ct1);

\node[right] at (8,-7) {$T_{3}''$};

\end{tikzpicture}
\caption{Construction of $T_{3}''$ from the tree $T''$.} 
\label{fig:T3''}
\end{figure}

\end{enumerate}

We now show that the above procedure is reversible.
Let $T^{\prime} \in \mathbf{B}''$ with root $r^{\prime}$.
We claim that there exists a tree 
$T'' \in \coprod_{v \in F_{\ell}} \mathcal{T}_{v}$ such that $\tau(T'') = T^{\prime}$.

Suppose that $r^{\prime}$ is adjacent to a vertex $v \in F_{\ell}$.
Then, by Proposition~\ref{prop:B"}, the root $r^{\prime}$ is not adjacent to any other vertex of $F_{\ell}$.
Starting from $T'$, we reconstruct a tree $T''$ with the required properties by considering the following cases in order.

\begin{enumerate}[label=(\roman*)]
    \item \label{1a}We first check whether $T'$ contains an increasing path of the form
    \begin{equation}\label{path:T'}
    r^{\prime} \longrightarrow d_1 \longrightarrow d_2 \longrightarrow \cdots \longrightarrow d_t \longrightarrow w,
    \end{equation}
    where $d_i \in \{2,3,\dots,n-\ell\}\setminus\{r^{\prime}\}$ for $1\leq i\leq t$, and 
    $w\in F_{\ell}\setminus\{v\}$ with $v<w$, such that the vertex $1$ is a descendant of $w$.
    If no such path exists in $T^{\prime}$, we proceed directly to case~\ref{2a}.  
    Otherwise, suppose that a path of the form \eqref{path:T'} exists.
    Then, we construct a tree $T^{\prime}_{1}$ from $T^{\prime}$ as follows.
    Remove the vertices $v$ and $w$ together with their descendant forests $\F_v$ and $\F_w$, respectively.
    Then reattach $v$ (together with $\F_v$) as a son of the vertex $d_t$, and $w$ (together with $\F_w$) as a son of the root $r^{\prime}$, obtaining a tree $T^{\prime}_{1}$.
    If $T^{\prime}_{1}\in\mathcal{T}_{w}$, then we set $T''=T^{\prime}_{1}$.  
    Otherwise, if $T^{\prime}_{1}\notin\coprod_{v\in F_{\ell}}\mathcal{T}_{v}$, replace $T^{\prime}$ by $T^{\prime}_1$ (keeping the same root $r'$) and proceed to case~\ref{2a}.

    \item \label{2a} We continue with the tree $T^{\prime}$ obtained after case~\ref{1a}.  
    In this tree, the root $r^{\prime}$ is adjacent to a vertex $v_0\in F_{\ell}$, where $v_0=v$ or $v_0=w$, according to whether an increasing path of the form~\eqref{path:T'} existed in the tree considered in case~\ref{1a}.
    Moreover, the vertex $1$ is a descendant of $v_0$.
    Consider the subtree of $T^{\prime}$ obtained by deleting the son of $v_0$ lying on the unique path from $v_0$ to the vertex $1$, together with all of its descendants.
    In the resulting subtree, if the vertex $v_0$ is not adjacent to any vertex in $F_{\ell}$, then proceed directly to case~\ref{3a}.
    Otherwise, suppose that $v_0$ is adjacent to a non-empty set of vertices $\{w_1,w_2,\dots,w_m\}\subseteq F_{\ell}\setminus\{v_0\}$.
    Then, in $T^{\prime}$, for each $1\leq i\leq m$, remove the vertex $w_i$ together with its descendant forest $\F_{w_i}$, and reattach $w_i$ (together with $\F_{w_i}$) as a son of the root $r^{\prime}$.
    Let us denote the resulting tree by $T^{\prime}_{2}$.
    If $T^{\prime}_{2}\in \coprod_{v\in F_{\ell}}\mathcal{T}_{v}$, then we set $T''=T^{\prime}_{2}$.
    Otherwise, replace $T^{\prime}$ by $T^{\prime}_{2}$ (keeping the same root $r'$) and proceed to case~\ref{3a}.

    \item\label{3a} Let $\rho$ be the son of $v_0$ lying on the unique path from $v_0$ to the vertex $1$.
    If $\rho\notin F_{\ell}$, then the tree $T^{\prime}$ already belongs to $\coprod_{v\in F_{\ell}}\mathcal{T}_{v},$ and we set $T''=T^{\prime}$.
    Otherwise, suppose that $\rho\in F_{\ell}$.
    Let $\widetilde{a}$ be the parent of the vertex $1$ in $T^{\prime}$.
    Clearly, $\widetilde{a}\notin F_{\ell}$.
    We obtain a tree $T_{3}^{\prime}$ from $T^{\prime}$ by interchanging the labels of the vertices $\widetilde{a}$ and $\rho$.
    The tree $T^{\prime}_{3}$ thus obtained belongs to $\coprod_{v\in F_{\ell}}\mathcal{T}_{v}$, and we set $T''=T_{3}^{\prime}$.
    
\end{enumerate}

Since all the steps in the construction of the map $\tau\colon \coprod_{v\in F_{\ell}} \mathcal{T}_{v} \longrightarrow \mathbf{B}''$ can be reversed systematically, the map $\tau$ is a bijection.
Note that, for any $v, v^{\prime}\in F_{\ell}$, the set $\mathcal{T}_{v^{\prime}}$ can be obtained from $\mathcal{T}_{v}$ by interchanging the labels of the vertices $v$ and $v^{\prime}$ in each tree of $\mathcal{T}_{v}$.
Hence, the cardinality of $\mathcal{T}_{v}$ is independent of the choice of $v\in F_{\ell}$.
In particular, $|\mathcal{T}_{v}|=|\mathcal{T}_{n}|$ for all $v\in F_{\ell}$.
Therefore, 
\begin{equation}\label{eq:B''}
|\mathbf{B}''| = \bigg |\coprod_{v\in F_{\ell}}\mathcal{T}_{v}\bigg| =\sum_{v\in F_{\ell}}|\mathcal{T}_{v}|=|F_{\ell}|~|\mathcal{T}_{n}| = \ell ~ |\mathcal{T}_{n}|.   
\end{equation}

We now proceed to compute the cardinality of the set $\mathcal{T}_{v}$.
Let $\overline{T}$ be a tree with vertex set $\{2,3,\dots,n\}$ having $\root(\overline{T})=v$, where $v\in F_{\ell}$ and $v$ is not adjacent to any vertex in $F_{\ell}$.
Let $\son_{\overline{T}}{(v)}=\{a_1 , a_2 , \dots , a_s\}$ be the set of sons of $v$ in $\overline{T}$, with $|\son_{\overline{T}}{(v)}|=s\geq 2$ and $\son_{\overline{T}}{(v)}\subseteq \{2,3, \dots , n-\ell\}$.
From $\overline{T}$, we construct a tree $\widetilde{T}$ with vertex set $[n]$ having $\root(\widetilde{T})=v$ by attaching the vertex $1$ as a leaf, making it a descendant of one of the vertices $a_i$ for some $1\le i\leq s$.
Note that, $\son_{\overline{T}}{(v)} = \son_{\widetilde{T}}{(v)}$.
If the leaf $1$ is a descendant of $a_{j}$ in $\widetilde{T}$, then we obtain tree a $T''\in \mathcal{T}_{v}$ from $\widetilde{T}$ by choosing any vertex in $\son_{\widetilde{T}}{(v)}\setminus\{a_j\}$ as the root (see Figure~\ref{fig:Ttilde}).
Thus, each tree $\widetilde{T}$ gives rise to exactly $s-1$ distinct trees in $\mathcal{T}_{v}$.
Conversely, every tree $T''\in \mathcal{T}_{v}$ yields a tree $\widetilde{T}$ by designating the vertex $v$ of $T''$ as the root.
If the vertex $v$ has $s$ sons in $T''$, then precisely $s-1$ trees in $\mathcal{T}_{v}$ correspond to the same tree $\widetilde{T}$.

\begin{figure}[ht]
\centering
\begin{tikzpicture}[scale=0.8,
    every node/.style={font=\small}]

\node[circle, fill=black, inner sep=1pt] (v) at (-1,0) {};
\node[above] at (v) {$v$};

\node[circle, fill=black, inner sep=1pt] (a1) at (-5,-1.5) {};
\node[left] at (a1) {$a_1$};

\draw (v) -- (a1);

\node[draw, rectangle, minimum width=1cm, minimum height=0.5cm] (Fa1) at (-5,-3 ) {$\F_{a_1}$};
\draw (a1) -- (Fa1);

\node[circle, fill=black, inner sep=1pt] (a2) at (-3,-1.5) {};
\node[left] at (a2) {$a_2$};

\draw (v) -- (a2);

\node[draw, rectangle, minimum width=1cm, minimum height=0.5cm] (Fa2) at (-3,-3 ) {$\F_{a_2}$};
\draw (a2) -- (Fa2);

\node at (-2.2,-1.5) {$\cdots$};

\node[circle, fill=black, inner sep=1pt] (aj) at (-1,-1.5) {};
\node[left] at (aj) {$a_j$};

\draw (v) -- (aj);


\node[circle, fill=black, inner sep=1pt] (par1) at (-1,-2.4) {};
\node[right] at (par1) {$a^{\prime}$};

\node[circle, fill=black, inner sep=1pt] (1) at (-1.5,-3.5) {};
\node[left] at (1) {$1$};

\draw (par1) -- (1);

\node[draw, rectangle, minimum width=1cm, minimum height=0.5cm] (Fa') at (-1,-4.5) {$\F_{a^{\prime}}$};
\draw (par1) -- (Fa');

\node at (0,-1.5) {$\cdots$};

\node[circle, fill=black, inner sep=1pt] (as) at (1,-1.5) {};
\node[right] at (as) {$a_s$};

\draw (v) -- (as);

\node[draw, rectangle, minimum width=1cm, minimum height=0.5cm] (Fas) at (1,-3) {$\F_{a_s}$};

\draw (as) -- (Fas);

\draw[dashed] (aj)--(par1);

\node[right, align=left] at (-6,-7.2) {%
$\widetilde{T}$ with $\root(\widetilde{T})=v$ and \\
$\son_{\widetilde{T}}(v)=\{a_1,a_2,\dots,a_s\}$, where \\
the leaf $1$ is a descendant of $a_j$.};

\node at (2.5,-3) {$\longrightarrow$};


\node[circle, fill=black, inner sep=1pt] (ak) at (8,0) {};
\node[above] at (ak) {$a_k$};

\node[draw, rectangle, minimum width=1cm, minimum height=0.5cm] (Fak) at (11,-1.3) {$\F_{a_k}$};

\draw (ak) -- (Fak);

\node[circle, fill=black, inner sep=1pt] (v1) at (6,-1.5) {};
\node[above] at (v1) {$v$};

\draw (ak) -- (v1);

\node[circle, fill=black, inner sep=1pt] (ai1) at (3.4,-2.5) {};
\node[left] at (ai1) {$a_{1}$};

\draw (v1) -- (ai1);

\node[draw, rectangle, minimum width=1cm, minimum height=0.5cm] (Fai1) at (3.4,-4 ) {$\F_{a_{1}}$};
\draw (ai1) -- (Fai1);

\node at (4.1,-2.5) {$\cdots$};

\node[circle, fill=black, inner sep=1pt] (ai2) at (5.6,-2.5 ) {};
\node[left] at (ai2) {$a_{k-1}$};
\draw (v1) -- (ai2);

\node[draw, rectangle, minimum width=1cm, minimum height=0.5cm] (Fai2) at (5.6,-4 ) {$\F_{a_{k-1}}$};
\draw (ai2) -- (Fai2);

\node[circle, fill=black, inner sep=1pt] (ak1) at (7.5,-2.5) {};
\node[left] at (ak1) {$a_{k+1}$};

\draw (v1) -- (ak1);

\node[draw, rectangle, minimum width=1cm, minimum height=0.5cm] (Fak1) at (7.5,-4 ) {$\F_{a_{k+1}}$};
\draw (ak1) -- (Fak1);

\node at (8.2,-2.5) {$\cdots$};

\node[circle, fill=black, inner sep=1pt] (ait) at (9.2,-2.5) {};
\node[left] at (ait) {$a_{j}$};

\draw (v1) -- (ait);

\node[circle, fill=black, inner sep=1pt] (pari1) at (9.2,-3.3) {};
\node[right] at (pari1) {$a^{\prime}$};

\node[circle, fill=black, inner sep=1pt] (i1) at (8.7,-4.3) {};
\node[left] at (i1) {$1$};

\draw (pari1) -- (i1);

\node[draw, rectangle, minimum width=1cm, minimum height=0.5cm] (Fia') at (9.2,-5.5) {$\F_{a^{\prime}}$};
\draw (pari1) -- (Fia');

\node at (9.8,-2.5) {$\cdots$};

\node[circle, fill=black, inner sep=1pt] (ais-1) at (10.5,-2.5 ) {};
\node[right] at (ais-1) {$a_{s}$};
\draw (v1) -- (ais-1);

\node[draw, rectangle, minimum width=1cm, minimum height=0.5cm] (Fais-1) at (10.5,-4) {$\F_{a_{s}}$};
\draw (ais-1) -- (Fais-1);

\draw[dashed] (ait)--(pari1);

\node[right, align=left] at (4.5,-7.5) {%
$T''$ with $\root(T'') = a_k$, where \\
$k\in[s]\setminus\{j\}$.};

\end{tikzpicture}
\caption{Construction of $T''$ from the tree $\widetilde{T}$, where $\root{(T'')}\in \son_{\widetilde{T}}{(v)}\setminus\{a_j\}$.}
\label{fig:Ttilde}
\end{figure}

Therefore, by Theorem~\ref{Thm:forest}, the number of trees $\overline{T}$ with $\son_{\overline{T}}{(v)}=\{a_1 , a_2 , \dots , a_s\}$, as described above, is $s(n-2)^{n-3-s}$.
The number of ways to choose a subset $\son_{\overline{T}}{(v)}\subseteq \{2,3, \dots , n-\ell\}$ with $s$ elements is $\binom{n-\ell -1 }{s}$.
For a fixed tree $\overline{T}$, we construct a tree $\widetilde{T}$ by attaching the vertex $1$ as a leaf.
Since $1$ may be attached to any vertex in $\{2,3,\dots,n\}\setminus\{v\}$, there are exactly $n-2$ possible choices for this attachment.
Further, as each $\widetilde{T}$ yields $s-1$ distinct trees in
$\mathcal{T}_v$, we obtain
\[
|\mathcal{T}_{v}| = \sum_{s=2}^{n-\ell -1} \binom{n-\ell -1}{s}~s~(n-2)^{n-s-3}~(s-1)~(n-2).
\]
Since $\binom{n-\ell -1}{s}~s~(s-1)= (n-\ell -1)~(n-\ell -2)~\binom{n-\ell -3}{s-2}$, we have
\begin{equation*}
\begin{split}
|\mathcal{T}_{v}| & = (n-\ell -1)~(n-\ell -2)~(n-2) \left(\sum_{s=2}^{n-\ell -1} \binom{n-\ell -3}{s-2}~(n-2)^{n-s-3} \right)\\
& \hspace{9cm} (\text{By setting } s-2 =j)\\
&  =  (n-\ell -1)~(n-\ell -2)~(n-2)\left(\sum_{j=0}^{n-\ell -3} \binom{n-\ell -3}{j}~(n-2)^{n-\ell - 3-j}~(n-2)^{\ell -2}\right)\\
&  =  (n-\ell -1)~(n-\ell -2)~(n-2)^{\ell-1}~(n-1)^{n-\ell -3}.
\end{split}
\end{equation*}

Thus, for all $v\in F_{\ell}$, we have
\begin{equation}\label{eq:|Tv|}
    |\mathcal{T}_{v}| =  (n-\ell -1)~(n-\ell -2)~(n-2)^{\ell-1}~(n-1)^{n-\ell -3}.
\end{equation}
Combining \eqref{eq:|Tv|} with \eqref{eq:B''}, we obtain 
\[
|\mathbf{B}''| = \ell~|\mathcal{T}_{v}| = (n-\ell -1)~(n-\ell -2)~(n-2)^{\ell-1}~(n-1)^{n-\ell -3}~\ell.
\]
This completes the proof of the lemma.
\end{proof}

We are now ready to prove Theorem~\ref{thm:|Un|}.

\begin{proof}[\textbf{Proof of Theorem~\ref{thm:|Un|}}]
By Remark~\ref{rmrk:Psi}, we have 
\[
|\UnFl| = |\Psi (\UnFl)| = |\mathbf{B}| ~ + ~ |\mathbf{B}^{\prime}| ~ + ~ |\mathbf{B}''|.
\]
Hence, using Lemma \ref{lemma:|B|}, \ref{lemma:|B'|} and \ref{lemma:|B"|}, we obtain
\begin{equation*}
\begin{split}
|\UnFl|  & =(n-\ell - 1)~(n-1)^{n-\ell -2}~(n-2)^{\ell -1}~\ell \\
& \hspace{0.5cm} + (n-\ell-1)^{2}~(n-\ell -2)~(n-2)^{\ell -1}(n-1)^{n-\ell -3} \\
& \hspace{0.5cm}  + (n-\ell -1)~(n-\ell -2)~(n-2)^{\ell-1}~(n-1)^{n-\ell -3}~\ell \\
&  =  (n-\ell - 1)~(n-1)^{n-\ell -3}~(n-2)^{\ell -1} \Bigl( (n-1)~\ell + (n-\ell - 2)~(n-1) \Bigr)\\
&  =  (n-1)^{n-\ell -2}~(n-2)^{\ell}~(n-\ell-1).
\end{split}
\end{equation*}
This completes the proof.
\end{proof}

%% file: sec03.tex
\section{\texorpdfstring{Enumeration of $\UnFl$ via the matrix tree theorem}{Counting Uprooted Trees Using MTT}}\label{sec:Enumeration by MTT}

For each $0\le k\le n-2$, let $\Gl^{n-k}$ be the rooted subgraph of $\Gl^{\prime}$ with root $n-k$, obtained from $\Gl^{\prime}$  by deleting the edges $e_{n-k,i}\in E(\Gl^{\prime})$ for all $n-k+1\leq i\leq n$.
That is, for $0\leq k\leq n-2$, we define
\[
\Gl^{n-k} \coloneqq \Gl^{\prime}-\{ e_{n-k,i} \ \colon n-k+1 \leq i \leq n\}.
\]
Clearly, any spanning tree of $\Gl^{n-k}$ is an uprooted spanning tree of $\Gl^{\prime}$ with root $n-k$.
Further, recall that $\mathcal{U}(\Gl^{\prime}) = \UnFl$.
Therefore, we can express 
$\UnFl
= \coprod_{k=0}^{n-2}\SPT{(\Gl^{n-k})}$,
where $\SPT{(\Gl^{n-k})}$ denotes the set of all spanning trees of $\Gl^{n-k}$.
Hence, we have
\[
\left|\UnFl\right|
= \sum_{k=0}^{n-2}\left|\SPT{(\Gl^{n-k})}\right|.
\]
Further, we can split this sum as follows.
\begin{equation}\label{eq:Uprooted sum MTT}
\left|\UnFl\right|
=
\left|\SPT{(\Gl^{n})}\right|
+
\sum_{k=1}^{\ell-1}\left|\SPT{(\Gl^{n-k})}\right|
+
\sum_{k=\ell}^{n-2}\left|\SPT{(\Gl^{n-k})}\right|.
\end{equation}

By the matrix tree theorem~\cite{Kirchhoff}, we know that the cardinality of the set of all spanning trees of $\Gl^{n-k}$ is given by the determinant of the reduced Laplacian matrix $\widetilde{L}(\Gl^{n-k})$ of $\Gl^{n-k}$, i.e.,
\begin{equation}\label{eq:ST(G)_MTT}
|\SPT{(\Gl^{n-k})}|= \det \bigl( \widetilde{L}(\Gl^{n-k}) \bigr), \quad  0\leq k \leq n-2.
\end{equation}

For the graph $\Gl^{n}$ with root $n$, we obtain the reduced Laplacian matrix $\widetilde{L}(\Gl^{n})$ by deleting the $n$-th row and the $n$-th column of the Laplacian matrix $L(\Gl^{n})$.
The resulting matrix $\widetilde{L}(\Gl^{n})$ is an $(n-1)\times (n-1)$ matrix in which the first diagonal entry is $n-\ell-1$, the $i$-th diagonal entry equals $n-1$ for $2\le i\le n-\ell$ and equals $n-2$ for $n-\ell+1\le i\le n-1$.
Moreover, all non-diagonal entries are $-1$ except for the $(1,j)$-th and $(j,1)$-th entries, where $n-\ell+1\le j\le n-1$, which are $0$.
Equivalently, $\widetilde{L}(\Gl^{n})$ is given by
\begin{equation*}
\widetilde{L}(\Gl^{n}) =
\begin{blockarray}{*{7}{c}r}
 & &  &  \substack{C_{n-\ell} \\ \downarrow} &  &  &  & \\
\begin{block}{[*{7}{c}]r}
 n-\ell-1 & -1 & \cdots & -1 & 0 &\cdots  & 0  & \\
 -1 & n-1 & \cdots & -1   & -1  & \cdots & -1  & \\
 \vdots & \vdots  & \ddots & \vdots & \vdots  &  \ddots & \vdots  & \\
 -1  & -1   & \cdots & n-1  & -1   &  \cdots & -1  & \leftarrow \substack{R_{n-\ell} \\ }  \\
 0  & -1   & \cdots & -1  & n-2   & \cdots& -1 & \\
 \vdots & \vdots    &\ddots  &\vdots & \vdots & \ddots &\vdots & \\
 0 & -1  & \cdots & -1  & -1  & \cdots & n-2 & {\scriptstyle (n-1) \times (n-1)}& \\
\end{block}
\end{blockarray}.
\end{equation*}

Let $R_{i}$ denote the $i$-th row and $C_{j}$ denote the $j$-th column of the matrix $\widetilde{L}(\Gl^{n})$.
Let us perform the following series of row and column operations on $\widetilde{L}(\Gl^{n})$.
\begin{enumerate}
    \item $C_1 \longrightarrow C_1 + \sum_{i=2}^{n-2}C_{i}$,
    \item $C_i \longrightarrow C_i + C_{1}$, for all $ 2\leq i\leq  n-1$,
    \item $R_{1} \longrightarrow R_{1} + \sum_{i=2}^{n-\ell}\frac{1}{n} R_i$.
\end{enumerate}
After applying these operations in the given order, the matrix $\widetilde{L}(\Gl^{n})$ is reduced to a lower triangular matrix whose first diagonal entry is $\frac{n-\ell-1}{n}$, the $i$-th diagonal entry is $n$ for $2\leq i \leq n-\ell$ and $n-1$ for $n-\ell +1\leq i \leq n-1$.
Since the determinant of a lower triangular matrix is the product of its diagonal entries, and all the above operations preserve the determinant, we have
\[\det \bigl(\widetilde{L}(\Gl^{n}) \bigr)=n^{n-\ell-1}~(n-1)^{\ell-1}~\left(\frac{n-\ell-1}{n}\right)=n^{n-\ell-2}~(n-1)^{\ell-1}~(n-\ell-1).\]
By the matrix tree theorem, we have
\begin{equation}\label{eq:MTT_root_n}
     |\SPT{(\Gl^{n})}| = \det \bigl(\widetilde{L}(\Gl^{n}) \bigr)=n^{n-\ell-2}~(n-1)^{\ell-1}~(n-\ell-1).
\end{equation}

Now consider the graph $\Gl^{n-k}$ with root $n-k$, where $1\leq k \leq \ell -1$.
By construction, in $\Gl^{n-k}$, the vertex $1$ is not adjacent to any vertex in $F_{\ell}$, and the root $n-k$ is not adjacent to any vertex $i$ with $ n-k+1\leq i\leq n$.
The reduced Laplacian matrix of $\Gl^{n-k}$ is obtained by deleting the $(n-k)$-th row and column of the Laplacian matrix $L(\Gl^{n-k})$.
The resulting reduced Laplacian matrix $\widetilde{L}(\Gl^{n-k})$ is an $(n-1)\times(n-1)$ matrix in which the first diagonal entry is $n-\ell-1$,
the $i$-th diagonal entry equals $n-1$ for $2 \leq i \leq n-\ell$, equals $n-2$ for $n-\ell+1 \leq i \leq n-k-1$, and equals $n-3$ for $n-k \leq i \leq n-1$.
Moreover, all non-diagonal entries are $-1$ except for the $(1,j)$-th and $(j,1)$-th entries, where $n-\ell +1\leq j \leq n-1$, which are $0$.

Let $R_{i}$ and $C_{j}$ denote the $i$-th row and $j$-th column of the matrix $\widetilde{L}(\Gl^{n-k})$, respectively.
We now perform the following sequence of row and column operations on $\widetilde{L}(\Gl^{n-k})$.
\begin{enumerate}
    \item $C_1 \longrightarrow C_1 + \sum_{i=2}^{n-1}C_{i}$,
    \item $C_{i} \longrightarrow C_{i} + C_{1}$, for all $2\leq i\leq  n-1$,
    \item $R_{i} \longrightarrow R_{i} - R_{1}$, for all $n-k\leq i\leq  n-1$,
    \item $R_1 \longrightarrow R_1 + \sum_{i=2}^{n-\ell}\frac{1}{n} R_i$, 
    \item $C_{n-k} \longrightarrow C_{n-k} + \sum_{i=n-k+1}^{n-1}C_{i}$, 
    \item $C_{i} \longrightarrow C_i + \frac{1}{n-k-2} C_{n-k}$, for all $ n-k+1\leq i\leq  n-1$.
\end{enumerate}
After applying the above operations in the given order, the matrix $\widetilde{L}(\Gl^{n-k})$ reduces to a lower triangular matrix whose first diagonal entry is $\frac{n-\ell-1}{n}$, 
the $(n-k)$-th diagonal entry is $n-k-2$,
the $i$-th diagonal entry equals $n$ for $2 \leq i \leq n-\ell$,
equals $n-1$ for $n-\ell +1 \leq i \leq n-k-1$,
and equals $n-2$ for $n-k+1 \leq i \leq n-1$.
Since the determinant of a lower triangular matrix is the product of its diagonal entries, and all the above operations preserve the determinant, we obtain
\[\det \bigl(\widetilde{L}(\Gl^{n-k})\bigr)=n^{n-\ell-2}~(n-1)^{\ell-k-1}~(n-2)^{k-1}~(n-k-2)~(n-\ell-1).\]
By the matrix tree theorem, for $1\leq k\leq \ell -1$, we have
\begin{equation}\label{eq:MTT_root_n-k (k<l)}
     |\SPT(\Gl^{n-k})| =n^{n-\ell-2}~(n-1)^{\ell-k-1}~(n-2)^{k-1}~(n-k-2)~(n-\ell-1).
\end{equation}

For $\ell \leq k\leq n-2$, consider the graph $\Gl^{n-k}$ with root $n-k$.
Let $\widetilde{L}(\Gl^{n-k})$ be the reduced Laplacian matrix of $\Gl^{n-k}$ obtained by deleting the $(n-k)$-th row and column of the Laplacian matrix $L(\Gl^{n-k})$. 
Then $\widetilde{L}(\Gl^{n-k})$ is an $(n-1)\times (n-1)$ matrix whose diagonal entries are given as follows:
\[
d_{ii}=
\begin{cases}
n-\ell-1, & i=1,\\
n-1, & 2 \leq i \leq n-k-1,\\
n-2, & n-k \leq i \leq n-\ell-1,\\
n-3, & n-\ell \leq i \leq n-1.
\end{cases}
\]
All non-diagonal entries are equal to $-1$, except the $(1,j)$-th and $(j,1)$-th entries, where $n-\ell \leq j \leq n-1$, which are equal to $0$.

Let $R_i$ and $C_j$ denote the $i$-th row and $j$-th column of $\widetilde{L}(\Gl^{n-k})$, respectively. 
We now perform the following column operations on $\widetilde{L}(\Gl^{n-k})$.
\begin{enumerate}
    \item $C_1 \longrightarrow C_1 + \sum_{i=2}^{n-1} C_{i}$,
    \item $C_i \longrightarrow C_i + C_{1}$, for all $ 2\leq i\leq  n-\ell-1$.
\end{enumerate}
Applying these operations in the stated order transforms the matrix $\widetilde{L}(\Gl^{n-k})$ into the $(n-1)\times(n-1)$ matrix $\widetilde{L}(\Gl^{n-k})'$ given by
\[
\scalebox{0.84}{$
\widetilde{L}(\Gl^{n-k})'=
\begin{blockarray}{cccccccccccccccc}
& & & & & \substack{C_{n-k-1}\\ \downarrow}
& & & &   \substack{C_{n-\ell-1}\\ \downarrow}
& & & & & \substack{C_{n-1}\\ \downarrow} \\
\begin{block}{[ccccccccccccccc]c}
& 1& 0 & 0 & \cdots  & 0 & 0 & 0 & \cdots  & 0 & 0 & 0 & \cdots & 0 & 0 & \\
& 1& n & 0 & \cdots  & 0 & 0 & 0 & \cdots  & 0 & -1 & -1 & \cdots & -1 & -1 & \\
& 1 & 0 & n & \cdots  & 0 & 0 & 0 & \cdots & 0 & -1  & -1 & \cdots & -1 & -1 & \\
& \vdots & \vdots & \vdots & \ddots & \vdots & \vdots & \vdots  & \ddots & \vdots & \vdots & \vdots & \ddots & \vdots & \vdots & \\
& 1 & 0 & 0 & \cdots & n  & 0 & 0 & \cdots & 0 & -1  & -1 & \cdots & -1 & -1 &  \longleftarrow  \substack{R_{n-k-1}} \\
& 0 & -1 & -1 & \cdots  & -1 & n-2 & -1 & \cdots & -1 & -1  & -1 & \cdots & -1 & -1 & \\
& 0 & -1 & -1 & \cdots  & -1 & -1 & n-2 & \cdots & -1  & -1 & -1 & \cdots & -1 & -1 & \\
& \vdots & \vdots & \vdots & \ddots & \vdots & \vdots & \vdots & \ddots & \vdots & \vdots & \vdots & \ddots & \vdots & \vdots & \\
& 0 & -1 & -1 & \cdots & -1 & -1 & -1 & \cdots  & n-2 & -1 & -1 & \cdots & -1 & -1 &  \longleftarrow  \substack{R_{n-\ell-1}} \\
& 0 & -1 & -1 & \cdots & -1 & -1 & -1 & \cdots & -1 & n-3 & -1 & \cdots & -1 & -1 & \\
& 0 & -1 & -1 & \cdots & -1 & -1 & -1 & \cdots & -1 & -1 & n-3 & \cdots & -1 & -1 & \\
& \vdots & \vdots & \vdots & \ddots & \vdots & \vdots & \vdots & \ddots & \vdots & \vdots & \vdots & \ddots & \vdots & \vdots & \\
& 0 & -1 & -1 & \cdots & -1 & -1 & -1 & -1 & \cdots & -1 & -1 & \cdots & n-3 & -1 & \\
& 0 & -1 & -1 & \cdots & -1 & -1 & -1 & -1 & \cdots & -1 & -1 & \cdots & -1 & n-3 & \longleftarrow  \substack{R_{n-1}} \\
\end{block}
\end{blockarray}
$}
\]
Since all the above column operations preserve the determinant, we have $\det\bigl(\widetilde{L}(\Gl^{n-k})\bigr)=\det\bigl(\widetilde{L}(\Gl^{n-k})'\bigr)$.
We compute the determinant of the matrix $\widetilde{L}(\Gl^{n-k})'$ by expanding along the first row, whose only nonzero entry is the first entry, equal to $1$. Hence, 
$$\det \bigl(\widetilde{L}(\Gl^{n-k})'\bigr) = \det\bigl(\widetilde{L}(\Gl^{n-k})''\bigr),$$
where $\widetilde{L}(\Gl^{n-k})''$ is the $(n-2)\times (n-2)$ matrix obtained from $\widetilde{L}(\Gl^{n-k})'$ by deleting its first row and first column. 
Applying the row operations $R_i \longrightarrow R_i - R_{n-2}$, which preserve the determinant, for all $n-k-1\leq i\leq  n-3$,  transforms the matrix $\widetilde{L}(\Gl^{n-k})''$ into the block matrix 
\[
\widetilde{\mathbf{L}} = 
\begin{bNiceArray}{c|c}[margin,columns-width=auto]
  A & B \\
  \hline
 C & D
\end{bNiceArray}_{(n-2)\times(n-2)},
\]  
where $A$ is an $(n-\ell-2)\times (n-\ell-2)$ diagonal matrix whose $i$-th diagonal entry equals $n$ for $1\leq i \leq n-k-2$, and equals $n-1$ for $n-k-1\leq i\leq n-\ell-2$.
Further, $B=[b_{ij}]$ is an $(n-\ell-2)\times \ell$ matrix with entries
\[
b_{ij}=
\begin{cases}
-1, & 1 \leq i \leq n-k-2,\ 1 \leq j \leq \ell,\\
0, & n-k-1 \leq i \leq n-\ell-2,\ 1 \leq j \leq \ell-1,\\
-(n-2), & n-k-1 \leq i \leq n-\ell-2,\ j=\ell.
\end{cases}
\]
Moreover, $C$ is an $\ell\times (n-\ell-2)$ matrix whose entries are all $0$, except those in the last row, which are all equal to $-1$. 
Finally, $D$ is an $\ell\times \ell$ matrix whose diagonal entries are all equal to $n-2$, except the last diagonal entry, which is equal to $n-3$, and whose non-diagonal entries are all $0$, except the $(i,\ell)$-th entries, which are equal to $-(n-2)$ for $1 \leq i \leq \ell-1$, and the $(\ell,j)$-th entries, which are equal to $-1$ for $1 \leq j \leq \ell-1$.

As $A$ is invertible, using the Schur's formula for determinants of block matrices, we have
\[
\det\bigl(\widetilde{L}(\Gl^{n-k})\bigr)=\det\bigl(\widetilde{\mathbf{L}}\bigr)= \det (A)\cdot \det (D-CA^{-1}B).
\]
As $A$ is diagonal, we obtain $\det(A) = n^{n-k-2}\, (n-1)^{k-\ell} $, and the matrix $D-CA^{-1}B$ is given by
\begin{equation*}
D-CA^{-1}B=
\begin{blockarray}{*{6}{c}r}
\begin{block}{[*{6}{c}]r}
n-2 & 0 & 0 & \cdots & 0 & -(n-2) & \\
0 & n-2 & 0 & \ddots & 0 & -(n-2) & \\
0 & 0 & n-2 & \cdots & 0 & -(n-2) & \\
\vdots & \vdots & \vdots & \ddots & \vdots & \vdots & \\
0 & 0 & 0 & \cdots & n-2 & -(n-2) & \\
\frac{-2n+k+2}{n} & \frac{-2n+k+2}{n} & \frac{-2n+k+2}{n} & \cdots & \frac{-2n+k+2}{n} & n-3-\lambda & {\scriptstyle \ell\times\ell} \\
\end{block}
\end{blockarray},
\end{equation*}
where $\lambda= \frac{n-k-2}{n} + \frac{(k-\ell)(n-2)}{n-1}$.
Now apply the column operation $C_{\ell} \longrightarrow C_{\ell} +\sum_{i=1}^{\ell -1} C_{i}$.
Then the matrix $D-CA^{-1}B$ is transformed into a lower triangular matrix whose diagonal entries are all equal to $n-2$, except the last diagonal entry, which is equal to $(n-k-\ell-2)+\frac{k-\ell}{n-1}+\frac{\ell(k+2)}{n}$.
Since the above column operation preserves the determinant, and the determinant of a triangular matrix is the product of its diagonal entries, it follows that 
\[
\det(D-CA^{-1}B)= (n-2)^{\ell- 1}~ \bigg((n-k-\ell-2)+\frac{k-\ell}{n-1}+\frac{\ell(k+2)}{n}\bigg).
\]
Hence, we obtain
\[
\det\bigl(\widetilde{L}(\Gl^{n-k})\bigr)= n^{n-k-2}~ (n-1)^{k-\ell}~ (n-2)^{\ell- 1}~ \bigg((n-k-\ell-2)+\frac{k-\ell}{n-1}+\frac{\ell(k+2)}{n}\bigg).
\]
For $\ell\leq k\leq n-2$, again by the matrix tree theorem, we have 
\begin{equation}\label{eq:MTT_root_n-k (k>l)}
     |\SPT(\Gl^{n-k})| = n^{n-k-2}~ (n-1)^{k-\ell}~(n-2)^{\ell- 1}~ \bigg((n-k-\ell-2)+\frac{k-\ell}{n-1}+\frac{\ell~(k+2)}{n}\bigg).
\end{equation}

Now, from \eqref{eq:Uprooted sum MTT}, \eqref{eq:MTT_root_n}, \eqref{eq:MTT_root_n-k (k<l)} and \eqref{eq:MTT_root_n-k (k>l)}, we obtain
\begin{equation*}
\begin{split}
|\UnFl|  & =n^{n-\ell -2}~(n-1)^{\ell -1}~(n-\ell - 1) \\
& \hspace{0.5cm} + \sum_{k=1}^{\ell -1}n^{n-\ell-2}~(n-1)^{\ell -k -1}~(n-2)^{k-1}~(n-k-2)~(n-\ell-1)\\
& \hspace{0.5cm}  + \sum_{k=\ell}^{n -2}n^{n-k-2}~(n-1)^{k-\ell}~(n-2)^{\ell -1}\left ((n-k-\ell-2)+\frac{k-\ell}{n-1}+\frac{\ell~(k+2)}{n} \right ).
\end{split}
\end{equation*}
This expression can be rewritten as
\begin{equation}\label{eq:Un_MTTsum}
\begin{split}
|\UnFl|  & = n^{n-\ell-2}~(n-\ell-1)\sum_{k=0}^{\ell -1}(n-1)^{\ell -k -1}~(n-2)^{k-1}~(n-k-2) \\
& \hspace{0.2cm}  +(n-2)^{\ell -1} \sum_{k=\ell}^{n -2}n^{n-k-2}~(n-1)^{k-\ell}\left((n-k-\ell-2) + \frac{(\ell+1)~nk + \ell~(n-k-2)}{n~(n-1)}\right).
\end{split}
\end{equation}

We now present a few propositions that will enable us to combine the preceding results and determine the cardinality of the set $\UnFl$ as an application of the matrix tree theorem.

\begin{proposition}\label{prop:(n-2)}
Let $\ell \geq 1$. Then, for all $n \geq \ell+2$, we have
\begin{equation*}
\begin{split}
\ell\left(\frac{n}{n-1}\right)^{n-\ell-2} + \sum_{k=\ell}^{n-2}\left(\frac{n}{n-1}\right)^{n-k-2} \left(\frac{n-k-\ell-2}{n-\ell-1}+\frac{(\ell+1)~nk~+\ell~(n-k-2)}{n~(n-1)~(n-\ell-1)}\right)  = n-2.
\end{split}
\end{equation*}  
\end{proposition}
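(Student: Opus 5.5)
Clearing denominators and reindexing turns the statement into a telescoping sum that can be evaluated in closed form. Set $x=\frac{n}{n-1}$ and $m=n-\ell-2\ge 0$, and reindex the sum by $j=n-k-2$, so that $j$ runs from $0$ to $m$ and $k=n-2-j$. Multiplying the whole identity by $(n-\ell-1)=m+1$, it suffices to show
\[
\ell(m+1)x^{m}+\sum_{j=0}^{m}x^{j}\Bigl((j-\ell)+\tfrac{(\ell+1)n(n-2-j)+\ell j}{n(n-1)}\Bigr)=(n-2)(m+1).
\]
Here $n-k-\ell-2=j-\ell$ and $n-k-2=j$. The summand is therefore $x^{j}(\alpha+\beta j)$ with $\alpha,\beta$ depending only on $n,\ell$. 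Collecting terms gives
\[
\alpha=-\ell+\frac{(\ell+1)(n-2)}{n-1},\qquad \beta=1-\frac{\ell+1}{n-1}+\frac{\ell}{n(n-1)}.
\]

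The key step is to evaluate $\sum_{j=0}^{m}x^{j}$ and $\sum_{j=0}^{m}jx^{j}$ using $x-1=\frac{1}{n-1}$, which makes the closed forms clean:
\[
\sum_{j=0}^{m}x^{j}=(n-1)(x^{m+1}-1),\qquad
\sum_{j=0}^{m}jx^{j}=(n-1)\Bigl((m+1)x^{m+1}-x\sum_{j=0}^{m}x^{j}\Bigr)\cdot\frac{1}{x}\cdot x .
\]
In practice I would use the standard formula $\sum_{j=0}^m jx^j=\frac{mx^{m+2}-(m+1)x^{m+1}+x}{(x-1)^2}$, with $(x-1)^{-2}=(n-1)^2$.

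Substituting these, the left side becomes $P\,x^{m}+Q$ with $P,Q$ rational in $n,\ell$; this is also where $x^{m+1}=x\cdot x^m$ and $x^{m+2}=x^2x^m$ get rewritten. The identity then holds for all $m$ exactly when the coefficient of $x^{m}$ vanishes and the constant term equals $(n-2)(m+1)$. Write $m+1=n-\ell-1$ throughout, so everything is a polynomial identity in $n,\ell$. I would verify each of the two coefficient identities by direct algebra.

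The main obstacle is the bookkeeping in this cancellation, because $\ell$ appears both in $\alpha,\beta$ and in $m$. As a sanity check before committing to the algebra, I would test $\ell=1,n=3$: here $m=0$, and the left side is $1\cdot 2\cdot 1+(0-1)+\frac{2\cdot 3\cdot 1+0}{6}=2$, while the right side is $1\cdot 2=2$, which matches.

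If the direct route gets messy, the alternative is induction on $n$ with $\ell$ fixed, writing $N=n-\ell-1$. However, $x$ changes with $n$, so induction is awkward. The generating-sum approach with the vanishing coefficient of $x^m$ is the one I would commit to. In it, the vanishing of the $x^m$ coefficient should reduce to $\ell(m+1)+(n-1)(\alpha x+\beta(m+1)x-\beta x(n-1)\cdot\tfrac{x-1}{1}\cdots)=0$, a linear identity in $\ell$, to be checked by expanding.
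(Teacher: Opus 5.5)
Your approach is correct, and it takes a genuinely different route from the paper. The paper handles exactly the obstruction you flagged, namely that the base $\frac{n}{n-1}$ moves with $n$. It replaces $n$ by a free parameter $a$ in the base and in the coefficients, keeping $n$ only in the exponents and the summation range. It then verifies the resulting more general identity, whose right-hand side is $(n-2)+1+\frac{\ell n-(n-1)a}{a(a-\ell-1)}$, by induction on $n$, and finally sets $a=n$.

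Your closed-form evaluation needs neither induction nor a guessed generalization. It also shows \emph{why} the identity holds: the $x^m$ terms cancel exactly.

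The algebra you deferred does close, and cleanly. Using $(n-1)x=n$ and $m-(n-1)=-(\ell+1)$, you get $\sum_{j=0}^m x^j=nx^m-(n-1)$ and $\sum_{j=0}^m jx^j=-n(\ell+1)x^m+n(n-1)$. Your coefficients simplify to $\alpha=\frac{m}{n-1}$ and $\beta=\frac{(n-1)^2-\ell(n-1)-1}{n(n-1)}$. Hence the coefficient of $x^m$ is $\ell(m+1)+n\alpha-n(\ell+1)\beta=0$, and the constant term is $-(n-1)\alpha+n(n-1)\beta=(n-2)(m+1)$. Your last displayed expression for the $x^m$-coefficient is garbled and should be replaced by the one above.

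Two minor points:
\begin{itemize}
\item Since $m$ and $x$ are both determined by $n$ and $\ell$, the phrase ``holds for all $m$ exactly when'' should be read only as a sufficient condition. That is all you need.
\item In your sanity check, $m+1=n-\ell-1=1$, not $2$, so both sides equal $1$ rather than $2$.
\end{itemize}
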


\begin{proof}
Let us introduce an auxiliary real parameter $a\in \mathbb{R}\setminus\{0,1,\ell+1\}$ and consider the more general identity 
\begin{equation}\label{eq:generalidentity}
\begin{split}
&\ell\left(\frac{a}{a-1}\right)^{n-\ell-2} 
+ \sum_{k=\ell}^{n-2}\left(\frac{a}{a-1}\right)^{n-k-2} \left(\frac{a-k-\ell-2}{a-\ell-1}+\frac{(\ell+1)~ak~+\ell~(a-k-2)}{a~(a-1)~(a-\ell-1)}\right)  \\
& \hspace{8cm}= (n-2) +\left(1+ \frac{\ell n-(n-1)~a}{a~(a-\ell -1)}\right).
\end{split}
\end{equation}
Identity \eqref{eq:generalidentity} can be verified by induction on $n$.
By substituting $a=n$ into \eqref{eq:generalidentity}, we obtain the required result for all integers $n\geq \ell+2$.
\end{proof}

\begin{proposition}\label{prop:(n-2)l}
Let $\ell\geq 1$. Then, for all $n\geq \ell+2$, we have
\begin{equation}\label{eq:(n-2)l_identity}
\begin{split}
(n-2)^{\ell} &=\left(\frac{n}{n-1}\right)^{n-\ell-2} ~ \sum_{k=0}^{\ell-1}(n-1)^{\ell-k-1}~(n-2)^{k-1}~(n-k-2) \\
& \hspace{0.5cm} + \frac{(n-2)^{\ell-1}}{n-\ell -1} ~ \sum_{k=\ell}^{n-2}\left(\frac{n}{n-1}\right)^{n-k-2}\left((n-k-\ell-2) + \frac{(\ell+1)~nk + \ell~(n-k-2)}{n~(n-1)}\right).
\end{split}
\end{equation}
\end{proposition}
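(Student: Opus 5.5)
The plan is to reduce \eqref{eq:(n-2)l_identity} directly to Proposition~\ref{prop:identity001} and Proposition~\ref{prop:(n-2)}, factoring out the common power $(n-2)^{\ell-1}$ from the right-hand side.

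\emph{Step 1: the first sum.} Write $(n-1)^{n-k-3}=(n-1)^{n-\ell-2}(n-1)^{\ell-k-1}$ for $0\le k\le \ell-1$. Then the left-hand side of \eqref{eq:|B|.2} in Proposition~\ref{prop:identity001} becomes
\[
(n-1)^{n-\ell-2}\sum_{k=0}^{\ell-1}(n-1)^{\ell-k-1}(n-2)^{k-1}(n-k-2).
\]
Its right-hand side is $(n-1)^{n-\ell-2}(n-2)^{\ell-1}\ell$. Dividing by $(n-1)^{n-\ell-2}$, which is nonzero since $n\ge 3$, gives
\[
\sum_{k=0}^{\ell-1}(n-1)^{\ell-k-1}(n-2)^{k-1}(n-k-2)=\ell\,(n-2)^{\ell-1}.
\]
Hence the first term on the right of \eqref{eq:(n-2)l_identity} equals $(n-2)^{\ell-1}\,\ell\left(\frac{n}{n-1}\right)^{n-\ell-2}$.

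\emph{Step 2: the second sum.} In the second term of \eqref{eq:(n-2)l_identity}, move the factor $\frac{1}{n-\ell-1}$ inside the parentheses. This is legitimate because $n\ge \ell+2$ gives $n-\ell-1\ge 1$. The summand then becomes exactly
\[
\left(\frac{n}{n-1}\right)^{n-k-2}\left(\frac{n-k-\ell-2}{n-\ell-1}+\frac{(\ell+1)nk+\ell(n-k-2)}{n(n-1)(n-\ell-1)}\right),
\]
which is the summand in Proposition~\ref{prop:(n-2)}.

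\emph{Step 3: combine.} After Steps 1 and 2, the right-hand side of \eqref{eq:(n-2)l_identity} is $(n-2)^{\ell-1}$ times the left-hand side of Proposition~\ref{prop:(n-2)}. That proposition evaluates the latter to $n-2$, so the total is $(n-2)^{\ell-1}(n-2)=(n-2)^{\ell}$, as claimed.

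There is no real obstacle here: all the analytic content sits in Proposition~\ref{prop:(n-2)}, whose verification via \eqref{eq:generalidentity} is taken as given. The only care needed is in Step 1, namely matching the exponents of $(n-1)$ correctly when extracting $(n-1)^{n-\ell-2}$ from \eqref{eq:|B|.2}.
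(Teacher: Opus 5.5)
Your proposal is correct and is essentially the paper's own proof: divide \eqref{eq:|B|.2} by $(n-1)^{n-\ell-2}$ to evaluate the first sum as $\ell\,(n-2)^{\ell-1}$, pull out the common factor $(n-2)^{\ell-1}$, and invoke Proposition~\ref{prop:(n-2)} for the bracketed quantity. Your exponent bookkeeping in Step~1 and the absorption of $\frac{1}{n-\ell-1}$ in Step~2 both check out.
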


\begin{proof}
Dividing both sides of \eqref{eq:|B|.2} by $(n-1)^{n-\ell-2}$, we obtain
\[
\sum_{k=0}^{\ell-1}(n-1)^{\ell-k-1}~(n-2)^{k-1}~(n-k-2) = \ell~(n-2)^{\ell-1}.
\]
Hence,
\[
\left(\frac{n}{n-1}\right)^{n-\ell-2} \left(\sum_{k=0}^{\ell-1}(n-1)^{\ell-k-1}~(n-2)^{k-1}~(n-k-2)\right)  
= \left(\frac{n}{n-1}\right)^{n-\ell-2}\ell~(n-2)^{\ell-1}.
\]
Therefore,
\begin{align*}
&\scalebox{0.9}{$\displaystyle
\left(\frac{n}{n-1}\right)^{n-\ell-2}\left(\sum_{k=0}^{\ell-1}(n-1)^{\ell-k-1}~(n-2)^{k-1}~(n-k-2)\right)$} \\
&\scalebox{0.9}{$\displaystyle\quad
+\frac{(n-2)^{\ell-1}}{n-\ell-1}\left(\sum_{k=\ell}^{n-2}
\left(\frac{n}{n-1}\right)^{n-k-2}
\left(
(n-k-\ell-2)+\frac{(\ell+1)~nk+\ell~(n-k-2)}{n~(n-1)}
\right)\right)$} \\
&\scalebox{0.9}{$\displaystyle
=(n-2)^{\ell-1}
\Biggl[
\ell\left(\frac{n}{n-1}\right)^{n-\ell-2}
+
\sum_{k=\ell}^{n-2}
\left(\frac{n}{n-1}\right)^{n-k-2}
\left(
\frac{n-k-\ell-2}{n-\ell-1}
+
\frac{(\ell+1)~nk+\ell~(n-k-2)}{n~(n-1)~(n-\ell-1)}
\right)
\Biggr]$}.
\end{align*}
By Proposition~\ref{prop:(n-2)}, the quantity inside the brackets is equal to $n-2$. 
Hence,
\begin{align*}
&\scalebox{0.9}{$\displaystyle
(n-2)^{\ell-1}
\Biggl[
\ell\left(\frac{n}{n-1}\right)^{n-\ell-2}
+
\sum_{k=\ell}^{n-2}
\left(\frac{n}{n-1}\right)^{n-k-2}
\left(
\frac{n-k-\ell-2}{n-\ell-1}
+
\frac{(\ell+1)~nk+\ell~(n-k-2)}{n~(n-1)~(n-\ell-1)}
\right)
\Biggr]$}\\
& = (n-2)^{\ell-1}\,(n-2)\\
& = (n-2)^{\ell}.
\end{align*}
This concludes the proof.
\end{proof}

Multiplying both sides of the identity \eqref{eq:(n-2)l_identity} by $(n-1)^{n-\ell-2}~(n-\ell-1)$, we obtain the equivalent form
\begin{equation}\label{eq:Un_MTTsum01}
\begin{split}
& (n-1)^{n-\ell-2}~(n-2)^{\ell}~(n-\ell-1) \\
& = n^{n-\ell-2}~(n-\ell-1)\sum_{k=0}^{\ell -1}(n-1)^{\ell -k -1}~(n-2)^{k-1}~(n-k-2) \\
& \hspace{0.2cm}  +(n-2)^{\ell -1} \sum_{k=\ell}^{n -2}n^{n-k-2}~(n-1)^{k-\ell}\left((n-k-\ell-2) + \frac{(\ell+1)~nk + \ell~(n-k-2)}{n~(n-1)}\right),
\end{split}
\end{equation}
valid for $\ell \geq 1$ and $n \geq \ell+2$.

Combining \eqref{eq:Un_MTTsum01} with \eqref{eq:Un_MTTsum}, we recover the formula 
\[
|\UnFl| = (n-1)^{n-\ell-2}~(n-2)^{\ell}~(n-\ell-1).
\]
This provides an alternative proof of Theorem~\ref{thm:|Un|}, as an application of the matrix tree theorem.

%% file: sec04.tex
\section{\texorpdfstring{Spherical $\Gl$-parking function}{Spherical Gl parking function}}\label{sec:SPF(Gl)}

Having enumerated the uprooted spanning trees of $\Gl'$  in the preceding sections, we now turn to the study of spherical $\Gl$-parking functions. 
In this section, we recall the necessary background, summarize the known results, and establish several results that will be used in Section~\ref{sec:Counting SPF(Gl)}.

Recall that $\Knn$ denotes the complete graph with vertex set $\{0\}\cup[n]$ and designated root $0$.
The graph $\Gl$ is obtained from $\Knn$ by deleting the $\ell$ edges joining vertex $1$ to the vertices in $F_{\ell} =\{n-\ell+1,\ldots,n\}$, and we set $\Gl'=\Gl-\{0\}$.
For a vertex $r\in [n]$, we write $(\Gl';r)$ to denote the graph $\Gl'$ with root $r$.

We recall the Depth-First-Search burning algorithm of Perkinson, Yang, and Yu~\cite{PYY17}. 
Applied to the rooted graph $(\Gl';r)$ with input function $P\colon [n]\setminus\{r\}\to\mathbb{N}$, the algorithm simulates a fire starting at $r$ and spreading through $\Gl'$ according to the depth-first-search rule, where $P(j)$ represents the number of water droplets at vertex $j$ available to resist the fire. 
A vertex is said to be \emph{burnt} once it has been reached by the fire. 
At each step, if the largest unburnt vertex $j$ adjacent to a burnt vertex $i$ has $P(j)=0$, the fire spreads to $j$, and the edge $e_{i,j}$ is recorded. 
Otherwise, one water droplet at $j$ is consumed, the edge $e_{i,j}$ is declared \emph{dampened} and removed, and the fire continues to spread from $i$. 
When all edges from a burnt vertex $i$ to unburnt vertices have been dampened, the algorithm \emph{backtracks} to the previous burnt vertex in the depth-first-search order.
If all vertices of $\Gl'$ are eventually burnt, then $P$ is a $(\Gl';r)$-parking function and the tree edges form a spanning tree of $(\Gl';r)$, yielding a bijection $\phi\colon\PF(\Gl';r)\to\SPT(\Gl';r)$.

Let $\mathcal{M}_{\Gl}=\bigl\langle m_A : \emptyset\ne A\subseteq[n] \bigr\rangle\subseteq\mathbb{K}[x_1,\ldots,x_n]$ be the $\Gl$-parking function ideal, and note that $m_{[n]}=x_1 x_2\cdots x_n$. 
For $\mathcal{P}\colon[n]\to\mathbb{N}$, we write 
$\mathbf{x}^{\mathcal{P}}=\prod_{i\in[n]}x_i^{\mathcal{P}(i)}$ for the associated monomial.
Following \cite{CGS}, for $\mathcal{P}\in\SPF(\Gl)$, we define the \emph{reduced spherical $\Gl$-parking function} $\widetilde{\mathcal{P}}\colon[n]\to\mathbb{N}$ by $\widetilde{\mathcal{P}}(i)=\mathcal{P}(i)-1$ for all $i\in[n]$, and denote the set of all such functions by $\widetilde{\SPF}(\Gl)$.

\begin{theorem}[{\cite{CGS}*{Lemma~19}}]\label{thm:reducedSPF}
Let $\Gl$ be as above, with root $0$ connected to all other vertices. 
Then,
\begin{enumerate}
    \item[\rm{(1)}] $\widetilde{\SPF}(\Gl)\subseteq\PF(\Gl)$.
    \item[\rm{(2)}]  Let $\mathcal{P}\in\SPF(\Gl)$ and let $r\in[n]$ be the unique vertex such that $\widetilde{\mathcal{P}}(r)=0$ but $\widetilde{\mathcal{P}}(j)\geq1$ for all $j>r$. 
    Consider the graph $(\Gl';r)$ with vertex set $[n]$ and root $r$. 
    Then 
    $\widehat{\mathcal{P}}=\widetilde{\mathcal{P}}|_{[n]\setminus\{r\}}$ 
    is a $(\Gl';r)$-parking function.
\end{enumerate}
\end{theorem}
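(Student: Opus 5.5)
Both parts come down to unwinding the definition of $\SPF(\Gl)$ through the monomials $m_A$, using that the root $0$ is adjacent to every vertex of $[n]$. Throughout, take $\mathcal{P}\in\SPF(\Gl)$, so $\mathbf{x}^{\mathcal{P}}\in\mathcal{M}_{\Gl}$ but $\mathbf{x}^{\mathcal{P}}\notin\mathcal{M}_{\Gl}^{(n-2)}$. The second condition says that for every proper nonempty $A\subsetneq[n]$ there is some $i\in A$ with $\mathcal{P}(i)<d_A(i)$. Since $\mathbf{x}^{\mathcal{P}}$ lies in $\mathcal{M}_{\Gl}$ but in none of the generators $m_A$ with $A$ proper, it must be divisible by $m_{[n]}=x_1\cdots x_n$. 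Hence $\mathcal{P}(i)\ge 1$ for all $i$, and $\widetilde{\mathcal{P}}$ is well defined.

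\textbf{Part (1).} Take a nonempty $A\subseteq[n]$; we need $i\in A$ with $\widetilde{\mathcal{P}}(i)<d_A(i)$.
\begin{itemize}
\item If $A=[n]$: then $d_{[n]}(i)=1$ for every $i$ (the edge to $0$). So we need some $i$ with $\mathcal{P}(i)=1$. Suppose instead $\mathcal{P}(i)\ge2$ for all $i$. Then $\mathbf{x}^{\mathcal{P}}$ is divisible by $m_{[n]\setminus\{j\}}$, which for any $j$ has exponents $d(i)\le 2$. That puts $\mathbf{x}^{\mathcal{P}}$ in $\mathcal{M}^{(n-2)}_{\Gl}$, a contradiction.
\item If $A$ is proper: sphericity gives $i\in A$ with $\mathcal{P}(i)<d_A(i)$. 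Then $\widetilde{\mathcal{P}}(i)=\mathcal{P}(i)-1<d_A(i)$.
\end{itemize}

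\textbf{Part (2).} By the case $A=[n]$ above, $Z=\{i:\widetilde{\mathcal{P}}(i)=0\}$ is nonempty. Let $r=\max Z$; this is the unique vertex described in the statement. We must show $\widehat{\mathcal{P}}$ is a $(\Gl';r)$-parking function. That is, for every nonempty $A\subseteq[n]\setminus\{r\}$ there is $i\in A$ with $\widehat{\mathcal{P}}(i)<d'_A(i)$, where $d'_A$ counts edges of $\Gl'$ leaving $A$ (the edge to $r$ included). In $\Gl$ we have $d_A(i)=d'_A(i)+1$, since $i$ is joined to $0$. So $\widehat{\mathcal{P}}(i)=\mathcal{P}(i)-1<d'_A(i)$ is equivalent to $\mathcal{P}(i)<d_A(i)$. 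Since $A$ is proper, this holds for some $i\in A$ by sphericity.

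The only delicate step is the case $A=[n]$. There one must check that every proper $m_A$ has all exponents at most $2$ when $|A|=n-1$. This holds because the complement of $A$ is $\{0,j\}$, so $d_A(i)=1+a_{ij}\le2$. Everything else is direct translation between $d_A$ and $d'_A$.
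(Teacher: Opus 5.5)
Your proof is correct and complete. The paper does not prove this statement; it only quotes it from \cite{CGS}. So there is no in-paper argument to compare yours against, and yours is a valid self-contained proof.

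The three facts that carry it all check out:
\begin{enumerate}
\item A monomial in $\mathcal{M}_{\Gl}$ that is not divisible by any $m_A$ with $A\subsetneq[n]$ must be divisible by $m_{[n]}=x_1\cdots x_n$. This gives $\mathcal{P}\geq 1$, so $\widetilde{\mathcal{P}}$ is well defined.
\item For $A=[n]\setminus\{j\}$ the exponents are $d_A(i)=1+a_{ij}\leq 2$. This uses exactly the two hypotheses that $\Gl$ is simple and that $0$ is adjacent to every vertex.
\item For $A\subseteq[n]\setminus\{r\}$ one has $d_A(i)=d'_A(i)+1$.
\end{enumerate}

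Two remarks.

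First, your proof of (2) never uses that $r=\max\{i:\widetilde{\mathcal{P}}(i)=0\}$. It actually shows that $\widetilde{\mathcal{P}}|_{[n]\setminus\{r\}}$ is a $(\Gl';r)$-parking function for every $r\in[n]$. This is correct and not a gap. The particular choice of $r$ only matters later: there, $\widehat{\mathcal{P}}(j)\geq 1$ for all $j>r$ forces every edge $e_{r,j}$ with $j>r$ to be dampened, so that $\phi(\widehat{\mathcal{P}})$ is uprooted. Your nonemptiness argument for $\{i:\widetilde{\mathcal{P}}(i)=0\}$, from the case $A=[n]$, is what guarantees that such an $r$ exists at all.

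Second, the translation in your part (2) is exactly the converse of the one the paper uses in the proof of Theorem~\ref{thm:Im(Psi)}. For a proper $A\subseteq[n]$ one has $m_A=\bigl(\prod_{i\in A}x_i\bigr)\prod_{i\in A}x_i^{d'_A(i)}$. Hence $m_A/\gcd(m_A,m_{[n]})$ is precisely the $(\Gl';r)$-generator attached to $A$. Stated this way, both directions reduce to a one-line divisibility check: $m_A\mid m_{[n]}\mathbf{x}^{\widehat{\mathcal{P}}}$ if and only if the $(\Gl';r)$-generator of $A$ divides $\mathbf{x}^{\widehat{\mathcal{P}}}$.
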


This enables us to use a modified Depth-First-Search burning algorithm~\cite{CGS} to associate uprooted spanning trees of $\Gl'$ with spherical $\Gl$-parking functions. 
We briefly recall this algorithm following~\cite{CGS}.
Let $\mathcal{P}\in\SPF(\Gl)$ and let $\widetilde{\mathcal{P}}$ be the associated reduced spherical $\Gl$-parking function. 
Set $r=\max\{i\in[n] : \widetilde{\mathcal{P}}(i)=0\}$ and consider the graph $\Gl'$ with root $r$. 
Since $\widehat{\mathcal{P}}=\widetilde{\mathcal{P}}|_{[n]\setminus\{r\}}$ is a $(\Gl';r)$-parking function by Theorem~\ref{thm:reducedSPF}, the Depth-First-Search burning algorithm yields a spanning tree $\phi(\widehat{\mathcal{P}})\in\SPT(\Gl';r)$. 
Moreover, since $\widehat{\mathcal{P}}(j)\geq 1$ for all $j>r$, all edges $e_{r,j}$ for $j>r$ get dampened during the algorithm, and hence 
$\phi(\widehat{\mathcal{P}})$ is an uprooted spanning tree of $\Gl'$. 
Recall that $\U(\Gl')=\UnFl$.
The modified Depth-First-Search burning algorithm induces a map
\[
\phi_{\Gl}\colon \SPF(\Gl)\longrightarrow \UnFl,
\qquad
\phi_{\Gl}(\mathcal{P})=\phi(\widehat{\mathcal{P}}).
\]
In particular, we have the following result.

\begin{theorem}[{\cite{CGS}*{Theorem~20}}]\label{thm:SPFGl_to_UnFl}
There exists an injective map
\[
\phi_{\Gl}\colon \SPF(\Gl)\hookrightarrow \UnFl.
\]
\end{theorem}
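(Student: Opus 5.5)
The map $\phi_{\Gl}$ is already defined above, so two things need proof: that it lands in $\UnFl$, and that it is injective. The approach is to reduce both claims to properties of the Depth-First-Search burning bijection $\phi\colon\PF(\Gl';r)\to\SPT(\Gl';r)$ of Perkinson, Yang, and Yu~\cite{PYY17}, for a suitably chosen root $r$.

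\emph{Choice of root.} Let $\mathcal{P}\in\SPF(\Gl)$. I would first check that $r=\max\{i\in[n] : \widetilde{\mathcal{P}}(i)=0\}$ is well defined, i.e.\ that $\widetilde{\mathcal{P}}$ has at least one zero. Since $\mathbf{x}^{\mathcal{P}}\notin\mathcal{M}_{\Gl}$, some exponent $\mathcal{P}(i)$ is smaller than $d_{[n]}(i)=1$, so some $\mathcal{P}(i)=0$. This also clarifies the definition: $\mathcal{P}\notin\mathcal{M}_{\Gl}^{(n-2)}$ forces the shift $\widetilde{\mathcal{P}}$ to be read as in~\cite{CGS}. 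Theorem~\ref{thm:reducedSPF}(2) then says that $\widehat{\mathcal{P}}=\widetilde{\mathcal{P}}|_{[n]\setminus\{r\}}$ is a $(\Gl';r)$-parking function. Hence $\phi(\widehat{\mathcal{P}})$ is a spanning tree of $\Gl'$ rooted at $r$.

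\emph{Landing in $\UnFl$.} I need every son $j$ of $r$ in $\phi(\widehat{\mathcal{P}})$ to satisfy $j<r$. Suppose instead that the tree contains an edge $e_{r,j}$ with $j>r$. In the DFS procedure, a tree edge $e_{r,j}$ is recorded only when the fire is at $r$, $j$ is still unburnt, and $j$ has no water left. But when $e_{r,j}$ is examined from $r$ with $j$ unburnt, no droplet of $j$ has been spent along this edge before, since each edge is examined at most once before it is dampened or used. Moreover $\widehat{\mathcal{P}}(j)\ge 1$ because $j>r$. So the edge is dampened, not recorded.

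The delicate point is that droplets at $j$ may already have been consumed by dampening edges from other burnt vertices before the fire returns to $r$. I would handle this by observing that $r$ is the starting vertex, so all edges out of $r$ are examined in decreasing order of the neighbour before the fire leaves $r$ for the first time. At that moment only $r$ is burnt, and no droplets have been used anywhere else. Any later return to $r$ by backtracking only examines edges that have not yet been dampened or used, and those edges lead to vertices that are now burnt or are handled by the same argument. Hence all edges $e_{r,j}$ with $j>r$ are dampened. The tree is therefore an uprooted spanning tree of $\Gl'$, and $\U(\Gl')=\UnFl$. I expect this bookkeeping of the DFS order at the root to be the main obstacle, and I would make it precise by following the proof of~\cite{CGS}*{Theorem~20} for $\Knn$ almost verbatim.

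\emph{Injectivity.} Suppose $\phi_{\Gl}(\mathcal{P}_1)=\phi_{\Gl}(\mathcal{P}_2)=T$. The root of $T$ is $r_1=r_2=r$, because the root of $\phi(\widehat{\mathcal{P}})$ is the starting vertex of the fire. Since $\phi\colon\PF(\Gl';r)\to\SPT(\Gl';r)$ is a bijection, $\widehat{\mathcal{P}}_1=\widehat{\mathcal{P}}_2$. Together with $\widetilde{\mathcal{P}}_1(r)=\widetilde{\mathcal{P}}_2(r)=0$, this gives $\widetilde{\mathcal{P}}_1=\widetilde{\mathcal{P}}_2$, and hence $\mathcal{P}_1=\mathcal{P}_2$ by undoing the shift by $1$.
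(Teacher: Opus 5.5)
Your route matches the paper's. The paper cites \cite{CGS}*{Theorem~20}, and the paragraph before the statement sketches exactly your three steps: get $\widehat{\mathcal{P}}\in\PF(\Gl';r)$ from Theorem~\ref{thm:reducedSPF}(2), note that every edge $e_{r,j}$ with $j>r$ is dampened, and get injectivity from the bijection $\phi\colon\PF(\Gl';r)\to\SPT(\Gl';r)$ plus recovering $r$ as the root. Your injectivity paragraph is fine. The genuine error is in your ``choice of root'' paragraph, which misreads the definition. For $\mathcal{P}\in\SPF(\Gl)$ we have $\mathbf{x}^{\mathcal{P}}\in\mathcal{M}_{\Gl}\setminus\mathcal{M}_{\Gl}^{(n-2)}$, so $\mathbf{x}^{\mathcal{P}}\in\mathcal{M}_{\Gl}$, not $\notin$. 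Since $\mathcal{M}_{\Gl}=\mathcal{M}_{\Gl}^{(n-2)}+\langle m_{[n]}\rangle$, the monomial must be divisible by $m_{[n]}=x_1\cdots x_n$, so $\mathcal{P}(i)\ge 1$ for every $i$. Your conclusion ``some $\mathcal{P}(i)=0$'' is therefore false. It would also give a zero of $\mathcal{P}$, not of $\widetilde{\mathcal{P}}$, i.e.\ $\widetilde{\mathcal{P}}(i)=-1$, which breaks the whole reduced construction.

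The correct justification has two parts. First, $\widetilde{\mathcal{P}}\ge 0$ because $m_{[n]}\mid\mathbf{x}^{\mathcal{P}}$; this is also what makes your later claim $\widehat{\mathcal{P}}(j)\ge 1$ for $j>r$ true. Second, $\widetilde{\mathcal{P}}$ has a zero by Theorem~\ref{thm:reducedSPF}(1) applied with $A=[n]$, since $d_{[n]}(i)=1$. Directly: if $\mathcal{P}\ge 2$ everywhere, then $m_{[n]\setminus\{1\}}=\prod_{2\le i\le n-\ell}x_i^2\prod_{j\in F_\ell}x_j$ divides $\mathbf{x}^{\mathcal{P}}$, contradicting $\mathbf{x}^{\mathcal{P}}\notin\mathcal{M}_{\Gl}^{(n-2)}$.

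There is also a smaller imprecision in your dampening paragraph. It is not true that all edges out of $r$ are examined before the fire first leaves $r$. What is true, and suffices, is this: the algorithm always takes the largest unburnt neighbour, so the edges $e_{r,j}$ with $j>r$ are the very first edges examined. When $e_{r,j}$ is examined, the only droplets spent so far are one at each larger neighbour $j'>j$ of $r$, so $j$ still holds $\widehat{\mathcal{P}}(j)\ge 1$ droplets. Hence $e_{r,j}$ is dampened and removed, and it can never become a tree edge. Your remark about later returns to $r$ is then unnecessary.
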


Now, we proceed to give the description of $\Im(\phi_{\Gl})$.
Let $T\in\UnFl$ be an uprooted spanning tree of $\Gl'$ with $\root(T)=r$.
Since the Depth-First-Search burning algorithm induces a bijection $\phi\colon\PF(\Gl';r)\to\SPT(\Gl';r)$, there exists a unique $(\Gl';r)$-parking function $\mathcal{P}_T$ such that $\phi(\mathcal{P}_T)=T$. 
By~\cite{CGS}*{Proposition~21}, we have $\Im(\phi_{\Gl})\subseteq\{T\in\UnFl : \mathcal{P}_T(j)\geq1 \text{ for all } j>r=\root(T)\}.
$
We define
\[
\Cnl=\{T\in\UnFl : \root(T)=r\in F_{\ell}\setminus\{n-\ell+1\} \text{ and } \mathcal{P}_T(k)\geq 1 \text{ for all } 2\leq k\leq n-\ell\},
\]
and express $\Cnl$ as a disjoint union $\Cnl=\coprod_{p=0}^{\ell -2}\Cnlp$, where $\Cnlp=\{T\in\Cnl : \root(T)=n-p\}$.
Note that for $T\in\Cnlp$, the condition $\mathcal{P}_T(k)\geq 1$ for all $2\leq k\leq n-\ell$ implies that $\mathcal{P}_T(j)=0$ for some $j\in\{n-\ell+1,\ldots,n-p-1\}$.
The following theorem gives a description of $\Im(\phi_{\Gl})$.

\begin{theorem}\label{thm:Im(Psi)}
For $n\geq 3$ and $1\leq\ell\leq n-2$, we have
\[
\Im(\phi_{\Gl})=\UnFl\setminus\Cnl.
\]
\end{theorem}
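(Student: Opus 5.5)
The plan is to work through the characterization of spherical parking functions via the skeleton ideal, using the injective map of Theorem~\ref{thm:SPFGl_to_UnFl}. Let $T\in\UnFl$ have root $r$ and let $\mathcal{P}_T$ be its $(\Gl';r)$-parking function. By \cite{CGS}*{Proposition~21} we may restrict to trees with $\mathcal{P}_T(j)\ge 1$ for all $j>r$. For such $T$ the only candidate preimage is $\mathcal{P}\colon[n]\to\N$ given by $\mathcal{P}(i)=\mathcal{P}_T(i)+1$ for $i\neq r$ and $\mathcal{P}(r)=1$. This follows by reversing the construction of $\phi_{\Gl}$ via Theorem~\ref{thm:reducedSPF}: $r$ is then the largest zero of $\widetilde{\mathcal{P}}$. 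So $T\in\Im(\phi_{\Gl})$ if and only if this $\mathcal{P}$ is spherical.

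Since every $\mathcal{P}(i)\ge1$, the monomial $\mathbf{x}^{\mathcal{P}}$ is divisible by $m_{[n]}=x_1\cdots x_n$, so it lies in $\mathcal{M}_{\Gl}$. Sphericity therefore reduces to $\mathbf{x}^{\mathcal{P}}\notin\mathcal{M}_{\Gl}^{(n-2)}$. Because $0$ is adjacent to every vertex, $d_A(i)-1$ equals the number $e_{\Gl'}(i,B)$ of $\Gl'$-edges from $i$ to $B=[n]\setminus A$. The condition thus becomes: for every nonempty proper $B\subset[n]$, with $A=[n]\setminus B$, there is some $i\in A$ with $\mathcal{P}_T(i)<e_{\Gl'}(i,B)$, where we put $\mathcal{P}_T(r)=0$. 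Call $B$ \emph{blocking} if this fails.

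First, no $B\ni r$ is blocking. Indeed, $A\subseteq[n]\setminus\{r\}$, and the parking condition for $(\Gl';r)$ produces $i\in A$ with $\mathcal{P}_T(i)$ smaller than the number of edges from $i$ to $B$ in the rooted graph, which is at most $e_{\Gl'}(i,B)$. Second, singletons $B=\{j\}$ with $j\neq1$, $j\neq r$ are not blocking: $r\ne1$ and $j\ne1$ are adjacent in $\Gl'$, and $\mathcal{P}_T(r)=0<1$. Third, $B=\{1\}$ is blocking exactly when every neighbour $k\in\{2,\dots,n-\ell\}$ of $1$ lies in $A$ with $\mathcal{P}_T(k)\ge1$. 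That means $r\in F_\ell$ and $\mathcal{P}_T(k)\ge1$ for $2\le k\le n-\ell$. When $r=n-\ell+1$ this cannot happen: all vertices other than $1$ would carry at least one droplet, and $1$ is not adjacent to $r$, so the DFS burning from $r$ could never start. This gives precisely the condition defining $\Cnl$, and hence the inclusion $\Im(\phi_{\Gl})\subseteq\UnFl\setminus\Cnl$.

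The main obstacle is the reverse inclusion: showing that no $B$ with $|B|\ge2$ and $r\notin B$ is blocking when $T\notin\Cnl$. My first approach is a Dhar-type argument. Suppose $B$ is blocking and run the DFS burning algorithm on $(\Gl';r)$ with $\mathcal{P}_T$. Consider the first moment the fire enters $B$, say along an edge from a burnt $i\in A$ to $b\in B$. Blocking gives $\mathcal{P}_T(i)\ge e_{\Gl'}(i,B)$ for all $i\in A$, and I would compare this with the number of edges dampened on the vertices of $A$ up to that moment. The aim is to conclude that $A$ is entirely burnt before $B$ is reached, and then that the only way the fire can cross into $B$ is when $B$ is forced to equal $\{1\}$. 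Concretely, every vertex of $B\setminus\{1\}$ is adjacent to $r$ and to all of $A\setminus\{1\}$, which should make the inequalities $\mathcal{P}_T(i)\ge e_{\Gl'}(i,B)$ incompatible with $T$ being a spanning tree that crosses into $B$. If this direct comparison fails, I would fall back on a genus/degree count: sum the blocking inequalities over $A$ and compare with the maximal total $|E(\Gl')|-(n-1)$ for parking functions, together with the uprooted constraint $\mathcal{P}_T(j)\ge1$ for $j>r$.
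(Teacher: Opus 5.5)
Your reduction is correct. Writing $B=[n]\setminus A$ and calling $B$ \emph{blocking} when $m_A\mid\mathbf{x}^{\mathcal{P}}$, your first three steps are sound, and together they give $\Im(\phi_{\Gl})\subseteq\UnFl\setminus\Cnl$ exactly as the paper does. The gap is the reverse inclusion. You label the case $r\notin B$, $|B|\ge 2$ as ``the main obstacle'' and offer only a tentative Dhar-type comparison, with a degree-count fallback. Neither is carried out: ``should make the inequalities incompatible'' and ``if this direct comparison fails'' are not arguments. So, as written, $\UnFl\setminus\Cnl\subseteq\Im(\phi_{\Gl})$ is not proved.

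The missing step needs no burning-algorithm analysis; it is your singleton argument applied verbatim. Suppose $r\notin B$ and $B\neq\{1\}$, and pick $j\in B\setminus\{1\}$. Since $r\neq 1$ and $j\neq 1$, the edge $e_{r,j}$ is present in $\Gl'$. Hence $r\in A$ satisfies $\mathcal{P}_T(r)=0<1\le e_{\Gl'}(r,B)$, so $B$ is not blocking. Equivalently, $x_r$ occurs only to the first power in $\mathbf{x}^{\mathcal{P}}$, so $m_A\mid\mathbf{x}^{\mathcal{P}}$ forces $r$ to have no neighbour in $B$ other than possibly $1$.

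Combined with your first step, the only possible blocking set is $B=\{1\}$. Your third step shows that $\{1\}$ is blocking if and only if $T\in\Cnl$, so for $T\notin\Cnl$ the function $\mathcal{P}$ is spherical. This is precisely the paper's argument: in case $\mathcal{A}_1$ it forces $A=[n]$, and in case $\mathcal{A}_2$ it forces $A=[n]\setminus\{1\}$.

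One smaller point also needs repair. For the reverse inclusion you cannot merely ``restrict'' via \cite{CGS}*{Proposition~21} to trees with $\mathcal{P}_T(j)\ge 1$ for all $j>r$. You need this for every $T\in\UnFl$, both to make $r$ the largest zero of $\widetilde{\mathcal{P}}$ (so that $\phi_{\Gl}(\mathcal{P})=T$) and in your treatment of $r=n-\ell+1$. It does hold. While only $r$ is burnt, the depth-first burning from $r$ examines all neighbours $j>r$ first. In an uprooted tree none of them may become a son of $r$, so each has at least one droplet.
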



\begin{proof}
We first show that $\Cnl\cap\Im(\phi_{\Gl})=\emptyset$.
Suppose, to the contrary, that $T\in\Cnl\cap\Im(\phi_{\Gl})$, and let $\root(T)=r$. 
Let $\mathcal{P}_T$ be the unique $(\Gl';r)$-parking function such that $\phi(\mathcal{P}_T)=T$.
Since $T\in\Im(\phi_{\Gl})$, there exists $\mathcal{P}\in\SPF(\Gl)$ such that $\phi_{\Gl}(\mathcal{P})=T$.
By definition of $\phi_{\Gl}$, we have $\mathcal{P}_T=\widetilde{\mathcal{P}}\big|_{[n]\setminus\{r\}}$,
where $\widetilde{\mathcal{P}}$ is the reduced spherical $\Gl$-parking function associated with $\mathcal{P}$. 
Hence, $\mathbf{x}^{\mathcal{P}}= m_{[n]}\mathbf{x}^{\mathcal{P}_T}$.
Since $T\in\Cnl$, we have $\prod_{i=2}^{n-\ell}x_i$ divides $\mathbf{x}^{\mathcal{P}_T}$, i.e., $\prod_{i=2}^{n-\ell}x_i\mid \mathbf{x}^{\mathcal{P}_T}$.
For $A=\{2,3,\ldots,n\}$, we have $m_{[n]}\prod_{i=2}^{n-\ell}x_i=x_1 m_A$
and hence $m_A\mid\mathbf{x}^{\mathcal{P}}$, contradicting that $\mathcal{P}\in\SPF(\Gl)$. 
Therefore, $\Cnl\cap\Im(\phi_{\Gl})=\emptyset$, and hence $\Im(\phi_{\Gl})\subseteq \UnFl\setminus\Cnl$.

For the reverse inclusion, we partition $\UnFl\setminus\Cnl$ into 
two disjoint subsets as follows.
\begin{align*}
\mathcal{A}_1 &= \{T\in\UnFl : \root(T)=r\in\{2,\ldots,n-\ell\}\},\text{ and } \\
\mathcal{A}_2 &= \{T\in\UnFl : \root(T)=r\in F_{\ell} 
\text{ and } \mathcal{P}_T(k)=0 \text{ for some } 
k\in\{2,\ldots,n-\ell\}\}.
\end{align*}
We show that $\mathcal{A}_1\coprod\mathcal{A}_2\subseteq\Im(\phi_{\Gl})$.
Let $T\in\mathcal{A}_1\coprod\mathcal{A}_2$ with $\root(T)=r$, and let 
$\mathcal{P}_T$ be the corresponding $(\Gl';r)$-parking function such that $\phi(\mathcal{P}_T)=T$. 
Set $\mathbf{x}^{\mathcal{P}}= m_{[n]}\mathbf{x}^{\mathcal{P}_T}$. 
We claim that $\mathcal{P}\in\SPF(\Gl)$, i.e., 
$\mathbf{x}^{\mathcal{P}}\in\mathcal{M}_{\Gl}\setminus
\mathcal{M}_{\Gl}^{(n-2)}$. 
Since $m_{[n]}\mid\mathbf{x}^{\mathcal{P}}$, clearly $\mathbf{x}^{\mathcal{P}}\in\mathcal{M}_{\Gl}$. 
Suppose, for contradiction, that $\mathbf{x}^{\mathcal{P}}\in\mathcal{M}_{\Gl}^{(n-2)}$.
Then there exists a non-empty proper subset $A$ of $[n]$ such that 
$m_A\mid\mathbf{x}^{\mathcal{P}}$.
If $r\notin A$, then 
$m_A\mid\mathbf{x}^{\mathcal{P}}$ implies 
$\frac{m_A}{\gcd(m_A,m_{[n]})}\mid\mathbf{x}^{\mathcal{P}_T}$, 
contradicting that $\mathcal{P}_T\in \PF(\Gl';r)$. 
Now suppose that $r\in A$.
If $T\in\mathcal{A}_1$, then since the degree of $x_r$ in $\mathbf{x}^{\mathcal{P}}$ is $1$
and $m_A\mid\mathbf{x}^{\mathcal{P}}$,
we must have $A=[n]$, contradicting that $A$ is a proper subset of $[n]$. 
If $T\in\mathcal{A}_2$, then we have $r\in A\cap F_{\ell}$ and $1\nsim r$.
Since the degree of $x_r$ in $\mathbf{x}^{\mathcal{P}}$ is $1$,
we obtain $A=[n]\setminus\{1\}$ and 
$m_A=\prod_{2\leq i\leq n-\ell}x_i^2\prod_{j\in F_\ell}x_j$. 
As $T\in\mathcal{A}_2$, there exists $k\in\{2,\ldots,n-\ell\}$ such that the degree of $x_k$ in $\mathbf{x}^{\mathcal{P}}$ is $1$.
Thus, $m_A$ does not divide $\mathbf{x}^{\mathcal{P}}$, a contradiction.
Therefore, $\mathcal{A}_1\coprod\mathcal{A}_2\subseteq\Im(\phi_{\Gl})$, and this completes the proof.
\end{proof}

For $\ell=1$, the map $\phi_{\Gl}$ was shown to be surjective in~\cite{CGS}, and hence $\mathcal{C}_{n,1}=\emptyset$. 
Therefore, for the rest of this article, we assume $\ell\geq 2$.
We now give a more geometric description of the set $\Cnlp$ by introducing the notion of a maximum path in $F_{\ell}$ associated with an uprooted tree, and fixing notation that will be used throughout.
Recall that $\Un$ denotes the set of all uprooted trees with vertex set $[n]$.

Let $T\in\Un$ be an uprooted tree with $\root(T)=r\in F_{\ell}$, and let the largest son of $r$ in $T$ be a vertex $a_1\in F_{\ell}$ satisfying $a_1<r$. 
The \emph{maximum path of $T$ in $F_{\ell}$}, denoted by $\Pmax(T)$, is the path
\[
r=a_0 \to a_1 \to a_2 \to \cdots \to a_{\alpha-1}\to a_{\alpha}
\qquad (\alpha\geq 1),
\]
satisfying the following conditions:
\begin{enumerate}
    \item $a_i\in F_{\ell}$ for all $0\leq i\leq \alpha$,
    \item for each $1\leq i\leq \alpha$, the vertex $a_i$ is the largest son of $a_{i-1}$ in $T$,
    \item $a_1<r=a_0$, and
    \item either $a_{\alpha}$ is a leaf of $T$, or every son of $a_{\alpha}$ belongs to $\{2,\ldots,n-\ell\}$, i.e., $\son_T(a_{\alpha})\cap F_{\ell}=\emptyset$.
\end{enumerate}

For a vertex $v$ of a rooted tree $T$, let $\F_{v}(T)$, or simply $\F_{v}$ when $T$ is clear from the context, denote the rooted forest formed by all descendants of $v$, where the roots of the connected components are precisely the sons of $v$ in $T$.
More generally, for any vertex $v$ of $T$, the notation $\F_{(v)}(T)$ denotes the rooted forest of descendants of $v$ with certain subtrees removed, where the excluded subtrees will be clear from the context.
The rooted subtree of $T$ consisting of $v$ together with all of its 
descendants, rooted at $v$, is $\overline{\F}_{v}(T)=v\vee \F_{v}(T)$ (see, Figure~\ref{fig:vFv_structure}).

\begin{figure}[ht]
\centering

\begin{subfigure}[t]{0.3\textwidth}
\centering
\begin{tikzpicture}[scale=0.9,
    every node/.style={font=\small}]

\node[circle,fill=black,inner sep=1.2pt] (v) at (0,1.5) {};
\node[above] at (v) {$v$};

\node[draw,rectangle,minimum width=1cm,minimum height=0.5cm] (Fv) at (0,-1) {$\F_v(T)$};

\draw (v)--(Fv);

\end{tikzpicture}

\vspace{3mm}

\caption{The rooted subtree $\overline{\F}_v(T)$.}
\end{subfigure}
\hfill
\begin{subfigure}[t]{0.3\textwidth}
\centering
\begin{tikzpicture}[scale=0.9,
    every node/.style={font=\small}]

\node[circle,fill=black,inner sep=1.2pt] (v) at (0,0.5) {};
\node[above] at (v) {$v$};

\node[draw,rectangle, minimum width=1cm, minimum height=0.5cm] (Fv) at (0,-1.8) {$\F'_v(T)$};

\draw (v)--(Fv);

\node[circle,fill=black,inner sep=1.2pt] (w) at (1.4,-2) {};
\node[above] at (w) {$w$};

\draw (Fv.east)--(w);

\node[
draw,
dashed,
rounded corners,
inner sep=0.35cm,
fit=(Fv) (w)
] (Fvbox) {};

\node[right=0.2cm] at (Fvbox.east)
{$\F_v(T)$};

\end{tikzpicture}

\vspace{3mm}

\caption{When the leaf $w$ has parent either $v$ or a descendant of $v$.}
\end{subfigure}
\hfill
\begin{subfigure}[t]{0.3\textwidth}
\centering
\begin{tikzpicture}[scale=0.9,
    every node/.style={font=\small}]

\node[circle,fill=black,inner sep=1.2pt] (v) at (0,0) {};
\node[above] at (v) {$v$};

\node[draw,rectangle, minimum width=1cm, minimum height=0.5cm] (Fv) at (0,-1.8) {$\F''_v(T)$};

\draw (v)--(Fv);

\node[circle,fill=black,inner sep=1.2pt] (w) at (1.4,-2) {};
\node[above] at (w) {$w$};

\draw (Fv.east)--(w);

\node[draw,rectangle, minimum width=1cm, minimum height=0.5cm] (Fw) at (1.4,-3.3) {$\F_w(T)$};

\draw (w)--(Fw);

\node[
draw,
dashed,
rounded corners,
inner sep=0.35cm,
fit=(Fv) (w) (Fw)
] (Fvbox) {};

\node[right=0.3cm] at (Fvbox.east) {$\F_v(T)$};

\end{tikzpicture}

\vspace{3mm}

\caption{When $w$ is a descendant of $v$ but not a leaf.}
\end{subfigure}

\caption{Possible structures of the rooted subtree $\overline{\F}_v(T)$.}
\label{fig:vFv_structure}
\end{figure}

For the maximum path $\Pmax(T): a_0\to a_1\to \cdots \to a_{\alpha}$, let $\F_{(a_i)}(T)$ denote the rooted forest obtained from $\F_{a_i}(T)$ by removing the subtree rooted at $a_{i+1}$. 
Equivalently, the roots of the connected components of $\F_{(a_i)}(T)$ are precisely the sons of $a_i$ other than $a_{i+1}$.

Let $u$ and $v$ be vertices of the rooted tree $T$ such that $u$ is a leaf of $T$ and is not a descendant of $v$. 
We construct a new rooted tree $T'$ by removing the forest $\F_v(T)$ from $T$ and attaching each root of $\F_v(T)$ to the vertex $u$. 
We denote this operation by
\[
T'= T\bigl[\F_v(T)\to u\bigr],
\]
and say that $T'$ is obtained from $T$ by \emph{attaching the descendants of $v$ to $u$}.
We note that this operation is defined only when $u$ is a leaf of $T$ and is not a descendant of $v$.
This operation is illustrated in Figure~\ref{fig:T to T'}. 
By construction, $\F_u(T')=\F_v(T),$ and hence $T=T'\bigl[\F_u(T')\to v\bigr].$
Thus, the transformation $T\mapsto T'$ is reversible.

\begin{figure}[ht]
\centering
\begin{tikzpicture}[scale=0.82,
    every node/.style={font=\small}]


\node[circle,fill=black,inner sep=1.2pt] (r) at (0,0) {};
\node[above] at (r) {$r$};

\node[draw,rectangle,
      minimum width=1cm,
      minimum height=0.5cm]
(Frp) at (-2,-2)
{$\F'_{(r)}(T)$};

\draw (r)--(Frp);

\node[circle,fill=black,inner sep=1.2pt]
(u) at (-0.5,-2.3) {};
\node[above] at (u) {$u$};

\draw (Frp.east)--(u);

\node[draw,rectangle,
      minimum width=1cm,
      minimum height=0.5cm]
(Frpp) at (3,-2)
{$\F''_{(r)}(T)$};

\draw (r)--(Frpp);

\node[circle,fill=black,inner sep=1.2pt]
(v) at (4.5,-2.3) {};
\node[above] at (v) {$v$};

\draw (Frpp.east)--(v);

\node[draw,rectangle,
      minimum width=1cm,
      minimum height=0.5cm]
(Fv) at (4.5,-3.5)
{$\F_v (T)$};

\draw (v)--(Fv);

\node at (6.2,-2) {$\longrightarrow$};

\node at (1,-4.3) {$T$};


\node[circle,fill=black,inner sep=1.2pt] (r2) at (10,0) {};
\node[above] at (r2) {$r$};

\node[draw,rectangle,
      minimum width=1cm,
      minimum height=0.5cm]
(Frp2) at (8,-2)
{$\F'_{(r)}(T)$};

\draw (r2)--(Frp2);

\node[circle,fill=black,inner sep=1.2pt]
(u2) at (9.5,-2.3) {};
\node[above] at (u2) {$u$};

\draw (Frp2.east)--(u2);

\node[draw,rectangle,
      minimum width=1cm,
      minimum height=0.5cm]
(Fv2) at (9.5,-3.5)
{$\F_v (T)$};

\draw (u2)--(Fv2);

\node[draw,rectangle,
      minimum width=1cm,
      minimum height=0.5cm]
(Frpp2) at (13,-2)
{$\F''_{(r)}(T)$};

\draw (r2)--(Frpp2);

\node[circle,fill=black,inner sep=1.2pt]
(v2) at (14.5,-2.3) {};
\node[above] at (v2) {$v$};

\draw (Frpp2.east)--(v2);

\node at (11,-4.3) {$T'$};

\end{tikzpicture}
\caption{The operation $T'=T\bigl[\F_v(T)\to u\bigr]$.} 
\label{fig:T to T'}
\end{figure}

\begin{proposition}\label{prop:Cnlp_characterization}
Let $T\in \UnFl$. Then $T\in \Cnlp$ if and only if the following conditions are satisfied.
\begin{enumerate}[label={\rm(\arabic*)}]
    \item \label{cond1} $\root(T)=r=n-p$ for some integer $p$ with 
    $0\leq p\leq \ell-2$, and
    \item \label{cond2} the maximum path $\Pmax(T)$ of $T$ in $F_{\ell}$ is of the form
    \begin{equation}\label{eq:Pmax_path}
    n-p=a_0 \to a_1 \to a_2 \to \cdots \to a_{\alpha-1}\to a_{\alpha} \qquad (\alpha\geq 1),
    \end{equation}
    where $a_{\alpha}$ is a leaf of $T$.
\end{enumerate}
\end{proposition}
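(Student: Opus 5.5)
The plan is to read off, step by step, what the Depth-First-Search burning algorithm of \cite{PYY17} does on $(\Gl';r)$ when it reconstructs $T$. The key fact I will use is simple. When the fire sits at a burnt vertex $i$, it examines the unburnt neighbours of $i$ in decreasing order. Each one with positive remaining water is dampened and loses one droplet. The first one with zero remaining water is burnt, and it is the largest son of $i$ among those not yet burnt. Consequently, for a vertex $k\neq r$, $\mathcal{P}_T(k)$ equals the number of burnt vertices that examined $k$ before $k$ was burnt. In particular, $\mathcal{P}_T(k)=0$ exactly when the first burnt vertex that examines $k$ is its parent. Two structural facts about $\Gl'$ drive everything. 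First, every vertex of $F_\ell$ is adjacent to every vertex of $\{2,\ldots,n-\ell\}$ and to every other vertex of $F_\ell$. Second, vertex $1$ is adjacent to no vertex of $F_\ell$.

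\emph{Necessity.} Let $T\in\Cnlp$, so $r=n-p$ with $0\le p\le\ell-2$ and $r\in F_\ell$. The fire at $r$ examines its neighbours in decreasing order: all neighbours larger than $r$ are dampened (this is uprootedness), and then $r$ burns its largest son $a_1<r$. The vertex $a_1$ is the first vertex of $\{2,\ldots,n-\ell\}\cup F_\ell$ below $r$ that $r$ examines. So if $a_1\in\{2,\ldots,n-\ell\}$, then $\mathcal{P}_T(a_1)=0$, contradicting the definition of $\Cnl$. Hence $a_1\in F_\ell$ and $a_1<r$, so $\Pmax(T)$ is defined.

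Now I iterate along $\Pmax(T)$. By induction on $i$, when $a_i$ is burnt the only burnt vertices are $a_0,\ldots,a_i$, all in $F_\ell$. Each earlier $a_j$ has examined only neighbours that are $\geq a_{j+1}$, and these all lie in $F_\ell$ or exceed $r$. In particular no vertex of $\{2,\ldots,n-\ell\}$ has yet been examined. At the terminal vertex $a_\alpha$, suppose $a_\alpha$ had a son. By condition (4) of the definition of $\Pmax(T)$, its largest son $b$ would lie in $\{2,\ldots,n-\ell\}$. Then $a_\alpha$ is the first vertex to examine $b$, and it burns $b$, giving $\mathcal{P}_T(b)=0$, again a contradiction. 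So $a_\alpha$ is a leaf, which is condition~\ref{cond2}.

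\emph{Sufficiency.} Assume \ref{cond1} and \ref{cond2}. By the same induction, the fire descends $r=a_0\to a_1\to\cdots\to a_\alpha$ while touching only vertices of $F_\ell$ or vertices larger than $r$. Since $a_\alpha$ is a leaf, it has no son to burn. It therefore examines every unburnt neighbour and dampens each of them. Its unburnt neighbours include all of $\{2,\ldots,n-\ell\}$, because $a_\alpha\in F_\ell$ and none of these vertices has been burnt yet. Each $k\in\{2,\ldots,n-\ell\}$ thus loses a droplet before it is burnt, so $\mathcal{P}_T(k)\ge1$. Together with $r=n-p$, $p\le\ell-2$, this gives $T\in\Cnlp$. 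Note that $r\ne n-\ell+1$ is automatic, since $a_1\in F_\ell$ satisfies $a_1<r$.

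The main obstacle is making the bookkeeping of the algorithm rigorous. I need that the vertex burnt from $a_i$ is exactly its largest son (so that the DFS tree equals $T$), and I need the precise equivalence between $\mathcal{P}_T(k)=0$ and ``the first examiner of $k$ is its parent.'' The latter requires care that a vertex examined repeatedly by different burnt vertices loses one droplet per examination. I would formalize this by the induction on the path described above, stated in terms of the set of burnt vertices and the set of examined pairs at each stage.
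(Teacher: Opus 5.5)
Your proposal is correct and follows the same route as the paper. You run the DFS burning algorithm down the maximum path and use $\mathcal{P}_T(k)\ge 1$ on $\{2,\dots,n-\ell\}$ to force $a_1\in F_\ell$ and $a_\alpha$ to be a leaf; conversely, you note that a leaf $a_\alpha\in F_\ell$ dampens every vertex of $\{2,\dots,n-\ell\}$ before any of them is burnt. Your version is more careful than the paper's in stating explicitly that no vertex of $\{2,\dots,n-\ell\}$ has been examined before the fire reaches $a_\alpha$. One small wording fix: $a_1$ is the first vertex that $r$ \emph{burns}, not the first vertex below $r$ that it examines, though $\mathcal{P}_T(a_1)=0$ follows either way since $r$ is its only examiner.
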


\begin{proof}
Let $T\in\Cnlp$. 
Then $\root(T)=r=n-p$ for some $0\leq p\leq\ell-2$, and $\mathcal{P}_T(k)\geq 1$ for all $2\leq k\leq n-\ell.$
Moreover, $\mathcal{P}_T(k)\geq 1$ for all $r<k\leq n$.
Therefore, proceeding as per the Depth-First-Search burning algorithm, we see that the largest son $a_1$ of $r$ satisfies $n-\ell+1\leq a_1<r$. 
Suppose the maximum path $\Pmax(T)$ has the form \eqref{eq:Pmax_path}.
We claim that $a_{\alpha}$ is a leaf of $T$. 
Since $\mathcal{P}_T(k)\geq 1$ for all $2\leq k\leq n-\ell$, every edge joining $a_{\alpha}$ to a vertex $k\in\{2,\dots,n-\ell\}$ is dampened during the Depth-First-Search burning algorithm. 
Thus, the algorithm backtracks from $a_{\alpha}$, and hence $a_{\alpha}$ must be a leaf.

Conversely, suppose that $T\in \UnFl$ satisfies conditions \ref{cond1} and \ref{cond2}. 
Then $\root(T)=n-p$ for some $0\leq p\leq\ell-2$.
Since $a_{\alpha}$ is a leaf, the Depth-First-Search burning algorithm backtracks from $a_{\alpha}$, which implies $\mathcal{P}_T(k)\geq 1$ for all $2\leq k\leq n-\ell$. Therefore $T\in\Cnlp$.
\end{proof}

See Figure~\ref{fig:Cnlp_structure} for an illustration of the structure of trees in $\Cnlp$.

\begin{figure}[ht]
\centering
\begin{tikzpicture}[scale=0.7,
    every node/.style={font=\small}]

\node[circle,fill=black,inner sep=1.2pt] (r) at (0,0) {};
\node[above] at (r) {$r=n-p$};

\node[circle,fill=black,inner sep=1.2pt] (a1) at (-2,-2) {};
\node[right] at (a1) {$a_1$};

\node[circle,fill=black,inner sep=1.2pt] (a2) at (-2,-4) {};
\node[right] at (a2) {$a_2$};

\node[circle,fill=black,inner sep=1.2pt] (am) at (-2,-7) {};
\node[right] at (am) {$a_{\alpha-1}$};

\node[circle,fill=black,inner sep=1.2pt] (aa) at (-2,-9) {};
\node[right] at (aa) {$a_{\alpha}$};

\draw (r)--(a1)--(a2);
\draw[dashed] (a2)--(am);
\draw (am)--(aa);

\node[draw,rectangle,
      minimum width=1cm,
      minimum height=0.5cm]
(Fr) at (2,-2)
{$\F_{(r)}$};

\draw (r)--(Fr);

\node[draw,rectangle,
      minimum width=1cm,
      minimum height=0.5cm]
(Fa1) at (-3.5,-3)
{$\F_{(a_1)}$};

\draw (a1)--(Fa1);

\node[draw,rectangle,
      minimum width=1cm,
      minimum height=0.5cm]
(Fa2) at (-3.5,-5)
{$\F_{(a_2)}$};

\draw (a2)--(Fa2);

\node[draw,rectangle,
      minimum width=1cm,
      minimum height=0.5cm]
(Fam) at (-3.5,-8)
{$\F_{(a_{\alpha-1})}$};

\draw (am)--(Fam);

\end{tikzpicture}
\caption{A tree $T\in\Cnlp$ with maximum path
$\Pmax(T): r\to a_1\to a_2\to \cdots \to a_{\alpha}$ and the associated forests.}
\label{fig:Cnlp_structure}
\end{figure}

\begin{proposition}\label{prop:T*_in_Cnlp}
Let $0\leq p\leq \ell-2$, and let $T\in \Un$ be an uprooted tree with $\root(T)=n-p$.
Suppose that $T$ has the following properties.
\begin{enumerate}
    \item[{\rm(1)}]  The maximum path $\Pmax(T)$ of $T$ in $F_{\ell}$ is of the form $n-p=a_0\to a_1\to \cdots \to a_{\alpha-1}\to a_{\alpha},$
    where $\alpha\geq 1$ and $a_1<n-p$,

    \item[{\rm(2)}]  vertex $1$ is a leaf of $T$ and is not adjacent to any vertex in $F_{\ell}$, and

    \item[{\rm(3)}]  $1$ is not a descendant of $a_{\alpha}$ in $T$.
\end{enumerate}
Let $(T)^*$ be the tree obtained from $T$ by attaching the descendants 
$\F_{a_{\alpha}}(T)$ of $a_{\alpha}$ to the leaf $1$, that is,
\[
(T)^*= T\bigl[\F_{a_{\alpha}}(T)\to 1\bigr].
\]
Then $(T)^*\in\Cnlp$.
\end{proposition}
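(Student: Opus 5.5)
The plan is to verify that $(T)^*$ satisfies the two conditions of Proposition~\ref{prop:Cnlp_characterization}, after first checking that $(T)^*\in\UnFl$. The operation $T[\F_{a_\alpha}(T)\to 1]$ is well defined because $1$ is a leaf of $T$ and, by hypothesis (3), is not a descendant of $a_\alpha$. The result is a rooted tree on $[n]$ with the same root $n-p$.

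\textbf{Step 1: $(T)^*$ lies in $\UnFl$.} First I will show that $(T)^*$ is uprooted. The sons of the root $n-p$ are unchanged, except when $a_\alpha$ is itself a son of the root ($\alpha=1$). Even then, $a_1$ remains a son of the root, and only its descendants move. So all sons of $n-p$ are still smaller than $n-p$, since $T$ is uprooted.

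Next I will check that $1$ is not adjacent to any vertex of $F_\ell$ in $(T)^*$. Its parent is unchanged and lies outside $F_\ell$ by hypothesis (2). Its new sons are the sons of $a_\alpha$ in $T$. By condition (4) in the definition of the maximum path, these all lie in $\{2,\dots,n-\ell\}$, that is, $\son_T(a_\alpha)\cap F_\ell=\emptyset$. Hence $(T)^*\in\UnFl$, and condition \ref{cond1} holds since $0\le p\le \ell-2$.

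\textbf{Step 2: the maximum path of $(T)^*$.} I will show that $\Pmax((T)^*)$ is the same path $n-p=a_0\to a_1\to\cdots\to a_\alpha$, and that $a_\alpha$ is now a leaf. The point is that only the subtree below $a_\alpha$ moved. For $0\le i\le \alpha-1$, the set of sons of $a_i$ is therefore unchanged, so $a_{i+1}$ is still the largest son of $a_i$ and still lies in $F_\ell$. Also $a_1<n-p$ still holds. In $(T)^*$ the vertex $a_\alpha$ has no sons, so condition (4) holds trivially and the path terminates at the leaf $a_\alpha$. By Proposition~\ref{prop:Cnlp_characterization}, it follows that $(T)^*\in\Cnlp$.

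\textbf{Main obstacle.} The delicate point is confirming that $1$ does not lie inside the moved subtree and is not on the maximum path; both are guaranteed by hypotheses (2) and (3). Care is also needed in the edge case $\alpha=1$, as well as the case where $1$ is a descendant of some $a_i$ with $i<\alpha$ via a son other than $a_{i+1}$. In that case the sons of $a_i$ are still unchanged, because only vertices below $1$ are added. So the argument goes through uniformly, and the rest is bookkeeping.
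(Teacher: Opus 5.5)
Your proposal is correct and follows the paper's proof essentially step for step. You show $(T)^*\in\UnFl$ because the parent of $1$ is unchanged and the new sons of $1$ are the sons of $a_\alpha$, which lie outside $F_\ell$. You then note that the maximum path is unchanged, with $a_\alpha$ now a leaf, and conclude via Proposition~\ref{prop:Cnlp_characterization}. Your additional checks (well-definedness of the reattachment, uprootedness, and unchanged son sets of $a_0,\dots,a_{\alpha-1}$) only spell out what the paper leaves implicit.
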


\begin{proof}
In the tree $(T)^*=T\bigl[\F_{a_{\alpha}}(T)\to 1\bigr]$,
the vertex $a_{\alpha}$ becomes a leaf. 
Moreover, since the roots of the connected components of $\F_{a_{\alpha}}(T)$ do not belong to $F_{\ell}$, the vertex $1$ remains non-adjacent to every vertex in $F_{\ell}$.
Hence, $(T)^*\in \UnFl$.
Further, the maximum path of $(T)^*$ in $F_{\ell}$ remains $n-p=a_0\to a_1\to \cdots \to a_{\alpha},$ with $a_{\alpha}$ now a leaf of $(T)^*$. 
Therefore, all the hypotheses of Proposition~\ref{prop:Cnlp_characterization} are satisfied, and hence $(T)^*\in \Cnlp$.
\end{proof}

The results established in this section will be used in Section~\ref{sec:Counting SPF(Gl)} to enumerate $\Cnlp$, and thereby determine $|\SPF(\Gl)|$.

%% file: sec05.tex
\section{\texorpdfstring{Enumeration of Spherical $\Gl$-parking function}{Counting Spherical Parking Function}}\label{sec:Counting SPF(Gl)}

Our goal in this section is to enumerate the spherical $\Gl$-parking functions. 
By Theorems~\ref{thm:SPFGl_to_UnFl} and~\ref{thm:Im(Psi)}, this reduces to computing  $|\UnFl\setminus\Cnl|$. 
Since the value of $|\UnFl|$ was determined in the preceding sections, it remains to enumerate $\Cnl$. 
Recall that $\Cnl=\coprod_{p=0}^{\ell-2}\Cnlp$.
Thus, it suffices to determine $|\Cnlp|$ for each $0\le p\le \ell-2$.
As counting $\Cnlp$ directly is difficult, we first establish several intermediate results that will lead to an explicit formula for $|\Cnlp|$, and hence for $|\SPF(\Gl)|$.
Henceforth, we assume that $n\geq\ell+2$ and $\ell\geq 2$.

Let $\Tnl$ denote the set of trees $\widetilde{T}$ with vertex set $[n]$ satisfying the following conditions:
\begin{enumerate}
    \item $\widetilde{T}$ is uprooted with $\root(\widetilde{T})=r=n-\ell+2,$

    \item vertex $1$ is not adjacent to the root $r$, and

    \item vertex $n-\ell+1$ is a leaf adjacent to the root $r$.
\end{enumerate}

For $\widetilde{T}\in\Tnl$, let $\F_{(r)}(\widetilde{T})$ denote the rooted forest consisting of all descendants of the root $r$ in $\widetilde{T}$, excluding the leaf $n-\ell+1$, and let $S=\son_{\widetilde{T}}(r)\setminus\{n-\ell+1\}$
be the set of sons of the root $r$ other than the leaf $n-\ell+1$. 
Then the roots of the connected components of the forest $\F_{(r)}(\widetilde{T})$ are precisely the vertices in $S$, and $\emptyset\neq S\subseteq \{2,3,\ldots,n-\ell\}.$
Thus, every tree $\widetilde{T}\in \Tnl$ has the form illustrated in Figure~\ref{fig:Tnl_structure}.
\begin{figure}[ht]
\centering
\begin{tikzpicture}[scale=0.8,
    every node/.style={font=\small}]

\node[circle, fill=black, inner sep=1.2pt] (r) at (-2,0) {};
\node[above] at (r) {$r=n-\ell+2$};

\node[circle, fill=black, inner sep=1.2pt] (b) at (-3.5,-1.6) {};
\node[left] at (b) {$n-\ell+1$};

\node[draw, rectangle, minimum width=1cm, minimum height=0.5cm]
(F) at (0,-1.6) {$\F_{(r)}(\widetilde{T})$};

\draw (r)--(b);
\draw (r)--(F);

\node at (-1.5,-3.0)
{$\widetilde{T}$ with $\root(\widetilde{T})=r=n-\ell+2$};

\node[circle, fill=black, inner sep=1.2pt] (i1) at (5,0) {};
\node[above] at (i1) {$i_1$};

\node[draw, rectangle, minimum width=1cm, minimum height=0.5cm]
(F1) at (5,-1.8) {$\F_{i_1}(\widetilde{T})$};
\draw (i1)--(F1);

\node[circle, fill=black, inner sep=1.2pt] (i2) at (8,0) {};
\node[above] at (i2) {$i_2$};

\node[draw, rectangle, minimum width=1cm, minimum height=0.5cm]
(F2) at (8,-1.8) {$\F_{i_2}(\widetilde{T})$};
\draw (i2)--(F2);

\node at (10.5,-0.9) {$\cdots$};

\node[circle, fill=black, inner sep=1.2pt] (is) at (13,0) {};
\node[above] at (is) {$i_s$};

\node[draw, rectangle, minimum width=1cm, minimum height=0.5cm]
(Fs) at (13,-1.8) {$\F_{i_s}(\widetilde{T})$};
\draw (is)--(Fs);

\node at (9,-3.6)
{$\F_{(r)}(\widetilde{T})$ with
$\emptyset\neq S=\{i_1,i_2,\dots,i_s\}\subseteq\{2,3,\dots,n-\ell\}$};
\end{tikzpicture}
\caption{The structure of a tree $\widetilde{T}\in\Tnl$.}
\label{fig:Tnl_structure}
\end{figure}

\begin{proposition}\label{prop:Tnl_count}
The cardinality of $\Tnl$ is given by
\[
|\Tnl| = (n-1)^{n-\ell-2}~(n-2)^{\ell-2}~(n-\ell-1).
\]
\end{proposition}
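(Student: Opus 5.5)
The plan is to decompose each $\widetilde{T}\in\Tnl$ at its root, as in Figure~\ref{fig:Tnl_structure}, and count the pieces using Theorem~\ref{Thm:forest}. This is the same pattern as in the proofs of Lemmas~\ref{lemma:|B|} and~\ref{lemma:|B'|}.

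\emph{The root's sons.} Since $\widetilde{T}$ is uprooted with root $r=n-\ell+2$, every son of $r$ is smaller than $r$. One son is the prescribed leaf $n-\ell+1$, and vertex $1$ is excluded by hypothesis. So the remaining sons form a set $S\subseteq\{2,\ldots,n-\ell\}$. Because $n\geq \ell+2$, there are vertices other than $r$ and $n-\ell+1$, and they must hang below some son other than the leaf $n-\ell+1$; hence $S\neq\emptyset$. Write $s=|S|$, so $1\le s\le n-\ell-1$, and there are $\binom{n-\ell-1}{s}$ choices of $S$.

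\emph{The forest.} Delete $r$ and the leaf $n-\ell+1$. What remains is a rooted forest $\F_{(r)}(\widetilde{T})$ on the $n-2$ vertices $[n]\setminus\{r,n-\ell+1\}$, with exactly $s$ components rooted at the vertices of $S$. Vertex $1$ lies somewhere in this forest, but not as a root, since $1\notin S$. Conversely, any such forest, together with $r$ joined to $S\cup\{n-\ell+1\}$, gives a tree in $\Tnl$: uprootedness only constrains the sons of $r$, and those all lie below $r$. A forest in which the $s$ specified vertices lie in distinct components is the same as a forest rooted at those vertices. So Theorem~\ref{Thm:forest}, applied with $n-2$ vertices, gives $s\,(n-2)^{n-s-3}$ forests for each fixed $S$.

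\emph{Summation.} This yields
\[
|\Tnl|=\sum_{s=1}^{n-\ell-1}\binom{n-\ell-1}{s}\, s\,(n-2)^{n-s-3}.
\]
Using $s\binom{n-\ell-1}{s}=(n-\ell-1)\binom{n-\ell-2}{s-1}$ and substituting $j=s-1$, we factor out $(n-\ell-1)(n-2)^{\ell-2}$. What remains is $\sum_{j=0}^{n-\ell-2}\binom{n-\ell-2}{j}(n-2)^{n-\ell-2-j}$, which equals $(n-1)^{n-\ell-2}$ by the binomial theorem. This gives the claimed formula.

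The only step needing care is the bijection in the forest step. One must confirm that the three defining conditions of $\Tnl$ impose nothing on $\F_{(r)}(\widetilde{T})$ beyond its root set being $S$; in particular, vertex $1$ is unrestricted inside the forest. One must also check the exponent bookkeeping, $(n-2)^{(n-2)-s-1}$ becoming $(n-2)^{\ell-2}\cdot(n-2)^{n-\ell-2-j}$. After that, everything is the routine binomial computation.
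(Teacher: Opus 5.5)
Your proposal is correct and follows the paper's proof essentially step for step. You choose the nonempty root set $S\subseteq\{2,\dots,n-\ell\}$ of $\F_{(r)}(\widetilde{T})$, count the forests on the remaining $n-2$ vertices by Theorem~\ref{Thm:forest}, and collapse the sum with the binomial theorem; your explicit checks that $S\neq\emptyset$ and that every such forest yields a tree in $\Tnl$ fill in details the paper leaves implicit.
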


\begin{proof}
Every tree $\widetilde{T}\in \Tnl$ is uniquely determined by the rooted forest $\mathcal{F}_{(r)}(\widetilde{T}),$
as illustrated in Figure~\ref{fig:Tnl_structure}.
Let $S=\{i_1,i_2,\ldots,i_s\}$
be the set of roots of the connected components of the forest
$\mathcal{F}_{(r)}(\widetilde{T})$. 
Then $\emptyset\neq S\subseteq \{2,3,\ldots,n-\ell\}.$ 
Therefore, the set $S$ with $s$ elements can be chosen in 
$\binom{n-\ell-1}{s}$
ways.
Now fix such a subset $S$ with $|S|=s$. By Theorem~\ref{Thm:forest}, the number of rooted forests with roots specified by the vertices of $S$ is
$s(n-2)^{n-3-s}.$
Hence,
\begin{equation*}
\begin{split}
|\Tnl| & = \sum_{s=1}^{n-\ell-1}\binom{n-\ell-1}{s}s(n-2)^{n-3-s} \\
& = \sum_{s=1}^{n-\ell-1}(n-\ell-1)\binom{n-\ell-2}{s-1}(n-2)^{n-3-s}  \\
& \hspace{8.8cm} (\text{By setting } s-1 =j)\\
& = (n-\ell-1)\sum_{j=0}^{n-\ell-2}\binom{n-\ell-2}{j}(n-2)^{n-4-j} \\
&  =  (n-\ell-1)~(n-2)^{\ell-2}\left(\sum_{j=0}^{n-\ell-2}\binom{n-\ell-2}{j}(n-2)^{n-\ell-2-j} \right)\\
&  =  (n-\ell-1)~(n-2)^{\ell-2}\bigl((n-2)+1\bigr)^{n-\ell-2}\\
&  =  (n-1)^{n-\ell-2}~(n-2)^{\ell-2}~(n-\ell-1).
\end{split}
\end{equation*}
This completes the proof.
\end{proof}

Throughout this section, the phrase \emph{attach a rooted forest to a vertex} means attaching each root of the connected components of the forest to that vertex. 
Likewise, \emph{attach a rooted subtree to a vertex} means attaching the root of the subtree to that vertex.

\begin{proposition}\label{prop:Tnl_bijection}
For $n\geq \ell+2$ and $\ell\geq 2$,
there exists a bijection $\psi\colon \Tnl \longrightarrow \Cnllminustwo$. 
Thus, we deduce that
\[
|\Cnllminustwo|=(n-1)^{n-\ell-2}~(n-2)^{\ell-2}~(n-\ell-1).
\]
\end{proposition}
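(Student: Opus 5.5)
By Proposition~\ref{prop:Cnlp_characterization} with $p=\ell-2$, a tree $T\in\UnFl$ lies in $\Cnllminustwo$ exactly when three things hold. First, $\root(T)=r=n-\ell+2$. Second, the largest son of $r$ is $a_1\in F_\ell$ with $a_1<r$, which forces $a_1=n-\ell+1$. Third, the maximum path ends at a leaf. Since $n-\ell+1$ is the only element of $F_\ell$ below $r$, the maximum path is just $r\to n-\ell+1$, so the third condition says $n-\ell+1$ is a leaf. Thus every son of $r$ other than $n-\ell+1$ lies in $\{2,\dots,n-\ell\}$, and vertex $1$ is not a son of $r$, since $r\in F_\ell$.

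So $\Cnllminustwo$ is precisely the set of trees in $\Tnl$ that satisfy one extra condition. Put $F'=\{n-\ell+3,\dots,n\}$. The extra condition is that vertex $1$ is adjacent to no vertex of $F'$. The root and the leaf $n-\ell+1$ are already non-adjacent to $1$ by the definition of $\Tnl$. Hence $\Cnllminustwo\subseteq\Tnl$, and I will take $\psi$ to be the identity on this subset. The remaining task is to redistribute the trees of $\Tnl$ in which $1\sim w$ for some $w\in F'$. Note that both sets are determined by the forest $\F_{(r)}$ on the $n-2$ vertices $[n]\setminus\{r,n-\ell+1\}$, with roots a nonempty $S\subseteq\{2,\dots,n-\ell\}$.

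The cleanest route I would take is to mimic the sequential construction of $\tau$ in Lemma~\ref{lemma:|B"|}. In that construction, offending adjacencies of $1$ with $F_\ell$ are removed by label interchanges and by the operation $T[\F_v\to u]$, and reversibility is checked step by step. Here I would proceed as follows.

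\begin{enumerate}
\item If $1$ is a son of some $w\in F'$, interchange the labels $1$ and $w$, or reattach $\overline{\F}_1$ to a suitable non-$F_\ell$ vertex on the path from the root.
\item If some $w\in F'$ is a son of $1$, reattach $\overline{\F}_w$ to $\Par(1)$, which is not in $F_\ell$.
\item Record the moved data in a canonical way, for instance via the order of the labels involved, so that the inverse can read it off.
\end{enumerate}

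I expect this reversibility bookkeeping to be the main obstacle, exactly as in the proof of Lemma~\ref{lemma:|B"|}.

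Because the bijection is delicate, I would secure the cardinality independently by a direct count of $\Cnllminustwo$, and then compare with Proposition~\ref{prop:Tnl_count}. First choose $S$ with $|S|=s$. Then count rooted forests on the $n-2$ vertices with root set $S$ in which vertex $1$ avoids the $\ell-2$ vertices of $F'$. To do this, delete vertex $1$. Its component splits into pieces, one of which contains its parent or it is a root; if $1\notin S$ it is not a root. All of $1$'s neighbours must lie among the $n-\ell-2$ vertices of $\{2,\dots,n-\ell\}$, and their number and choice are governed by the generalized forest count of Theorem~\ref{Thm:forest}, applied on $n-3$ vertices. Summing with binomial coefficients, as in Lemmas~\ref{lemma:|B'|} and \ref{lemma:|B"|}, should collapse via the binomial theorem to $(n-1)^{n-\ell-2}(n-2)^{\ell-2}(n-\ell-1)$.

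Equality of the two cardinalities would already justify the existence of a bijection $\psi$. The explicit construction above then serves to make it combinatorial.
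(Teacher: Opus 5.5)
Your proposal has a genuine gap at its first step: the description of $\Cnllminustwo$ is wrong. The definition of the maximum path requires only $a_1<r$. The later vertices $a_2,\dots,a_\alpha$ need only lie in $F_\ell$ and be successive largest sons, and they may exceed $r$. For $p=\ell-2$ you correctly get $a_1=n-\ell+1$. But the path may continue as $r\to n-\ell+1\to a_2\to\cdots\to a_\alpha$ with $a_2,\dots,a_\alpha\in F'=\{n-\ell+3,\dots,n\}$, and Proposition~\ref{prop:Cnlp_characterization} only demands that $a_\alpha$ be a leaf. So $n-\ell+1$ need not be a leaf, and $\Cnllminustwo\not\subseteq\Tnl$. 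Nor are trees of $\Cnllminustwo$ encoded by a forest with root set $S\subseteq\{2,\dots,n-\ell\}$; $S$ can even be empty.

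A concrete check: take $n=5$, $\ell=3$, so $r=4$, $n-\ell+1=3$, $F'=\{5\}$.
\begin{itemize}
\item $\Tnl$ consists of three trees: $4$ has sons $3$ (a leaf) and $2$, and $1,5$ hang below $2$. Only one of these has $1\nsim 5$.
\item $\mathcal{C}_{5,3}^{1}$ also contains the tree with edges $4\to3$, $3\to5$, $4\to2$, $2\to1$, and the tree with edges $4\to3$, $3\to5$, $3\to2$, $2\to1$. Both have maximum path $4\to3\to5$, and DFS burning gives $\mathcal{P}_T(2)=2$ and $1$ respectively.
\item Hence $|\mathcal{C}_{5,3}^{1}|=3=|\Tnl|$, whereas your set has one element.
\end{itemize}

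Your plan contains its own warning sign. If $\Cnllminustwo$ were the subset of $\Tnl$ cut out by $1\nsim F'$, it would be a proper subset for every $\ell\geq3$: put $n$ below $2$ and $1$ below $n$. The trees you want to ``redistribute'' would then have no target, and no bijection could exist. For the same reason, your direct count counts the wrong set and cannot collapse to $(n-1)^{n-\ell-2}(n-2)^{\ell-2}(n-\ell-1)$. Your argument is correct only for $\ell=2$, where $F'=\emptyset$ and $\Cnllminustwo=\Tnl$.

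The missing idea is this. The identity covers exactly the trees with $\alpha=1$, namely the trees of $\Tnl$ in which $1$ is adjacent to no vertex of $F_\ell$. The trees of $\Tnl$ in which $1$ has a son or a parent in $F'$ must be sent bijectively onto the trees with $\alpha>1$. The paper does this as follows.
\begin{itemize}
\item When $1$ has a son in $F'$: move $\F_1$ onto the leaf $n-\ell+1$ and apply $T\mapsto T^*$.
\item When $\Par_{\widetilde T}(1)\in F'$: swap the labels $b_1,b_\beta$ on the root-to-$1$ path, then reassemble the $F_\ell$-vertices of that path into a maximum path hanging below $n-\ell+1$.
\item The inverse is read off from the position of $1$ relative to $\Pmax(T)$.
\end{itemize}
Your step list does not specify such a map. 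Its first move also fails: interchanging the labels $1$ and $w$ when $1$ is a son of $w$ leaves $1$ and $w$ adjacent.
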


\begin{proof}
Let $\widetilde{T}\in \Tnl$. Consider the unique path $Q$ in $\widetilde{T}$ from the root $r=n-\ell+2$ to the vertex $1$. 
Suppose that $Q$ is of the form
\[
Q \colon \quad 
r\to b_1\to b_2\to \cdots \to b_{\beta}\to 1, \qquad (\beta\geq 1).
\]
Since all the sons of the root other than the leaf $n-\ell+1$ belong to $\{2,3,\ldots,n-\ell\}$, we have $b_1\notin F_{\ell}.$
Let $\son_{\widetilde{T}}(1)$ be the set of sons of $1$ in $\widetilde{T}$, and $\Par_{\widetilde{T}}(1)$ be the parent of $1$ in $\widetilde{T}$. 
Clearly, $\Par_{\widetilde{T}}(1)=b_{\beta}$.

We now consider the following cases.
In each of the cases, we also describe the reverse construction, thereby showing that the corresponding transformations are reversible.

\medskip

\noindent 
$\textbf{Case I.}$ 
Suppose that in $\widetilde{T}$, the vertex $1$ is not adjacent to any vertex in $F_{\ell}$.
That is, $b_{\beta}=\Par_{\widetilde{T}}(1)\notin F_{\ell}$ and $\son_{\widetilde{T}}(1)\cap F_{\ell} =\emptyset.$
Hence, we have  $\widetilde{T}\in \Cnllminustwo.$
In this case, we define $\psi(\widetilde{T})=\widetilde{T}.$

\medskip
\noindent
Conversely, if $T\in \Cnllminustwo$, then $\root(T)=r=n-\ell+2$, and the vertex $1$ is not adjacent to any vertex in $F_{\ell}$.
Moreover, if the maximum path $\Pmax(T)$ in $F_{\ell}$ is of the form $r=n-\ell+2\to n-\ell+1,$ with $n-\ell+1$ a leaf of $T$, then $T\in \Tnl$ satisfying Case~I, and $\psi(T)=T.$

\medskip
\noindent 
$\textbf{Case II.}$ 
Suppose that $b_{\beta}=\Par_{\widetilde{T}}(1)\notin F_{\ell}$, but $\son_{\widetilde{T}}(1)\cap F_{\ell}\neq \emptyset$.
In this case, consider the tree $\widetilde{T}'= \widetilde{T}\bigl[\F_1(\widetilde{T})\to (n-\ell+1)\bigr]$
obtained by attaching descendants of $1$ to the leaf $n-\ell+1$ in $\widetilde{T}$. 
Then $1$ becomes a leaf of $\widetilde{T}'$, and $\widetilde{T}'$ satisfies the hypotheses of Proposition~\ref{prop:T*_in_Cnlp}. 
Applying Proposition~\ref{prop:T*_in_Cnlp}, we obtain a tree $(\widetilde{T}')^*\in \Cnllminustwo$, and define $\psi(\widetilde{T})=(\widetilde{T}')^*$.

\medskip
\noindent 
We now identify the trees in $\Cnllminustwo$ arising from trees in $\Tnl$ satisfying Case~II. 
Let $T\in\Cnllminustwo$ whose maximum path $\Pmax(T)$ has the form
\begin{equation}\label{eq:caseII_pmax}
r=n-\ell+2\to \mathrel{\underset{\substack{\rotatebox{90}{$=$} \\ n-\ell+1}}{a_1}}\to a_2\to \cdots \to a_{\alpha},
\qquad (\alpha>1),
\end{equation}
where $a_{\alpha}$ is a leaf of $T$, and $1$ is not a descendant of $a_1=n-\ell+1$.
Consider the trees $T'=T\bigl[\F_1(T)\to a_{\alpha}\bigr]$,
obtained by attaching descendants of $1$ to the leaf $a_{\alpha}$ in $T$, and $T^2=T'\bigl[\F_{a_1}(T')\to 1\bigr]$,
obtained by attaching all descendants of $a_1=n-\ell+1$ to the leaf $1$ in $T'$.
Clearly, $T^2\in \Tnl$, and $T^2=\widetilde{T}$ is a tree satisfying the conditions of Case~II. 
Reversing the above constructions, we obtain $\psi(\widetilde{T})=\psi(T^2)=T$.

\medskip
\noindent 
$\textbf{Case III.}$ 
Suppose that $b_{\beta}=\Par_{\widetilde{T}}(1)\in F_{\ell}.$
Since $b_1\notin F_{\ell}$, we necessarily have $\beta>1$.
Interchanging the vertices $b_1$ and $b_{\beta}$ in $\widetilde{T}$, we obtain a tree, denoted by $\widetilde{T}^{2}$. 
Under this interchange, the path $Q$ becomes a path $Q'$ of the form
\begin{equation}\label{eq:Qprime}
Q': \quad r\to b_1'\to b_2'\to \cdots \to b_{\beta}'\to 1,
\end{equation}
where $b_1'=b_{\beta}$, $b_{\beta}'=b_1$, and $b_j'=b_j$, for all $j\notin\{1,\beta\}$.
For each $b_j'$ on the path $Q'$, let $\mathcal{F}_{(b_j')}(\widetilde{T}^{2})$ denote the rooted forest consisting of all descendants of $b_j'$ in $\widetilde{T}^{2}$ excluding those lying on the path $Q'$. 
Clearly, $\mathcal{F}_{(b_j')}(\widetilde{T}^{2})=\mathcal{F}_{(b_j)}(\widetilde{T})$ for all $j$.
Suppose that $b_1',b_2',\ldots,b_t'\in F_{\ell}$, but  $b_{t+1}'\notin F_{\ell}$.
Since $b_{\beta}'=b_1\notin F_{\ell}$, we have $t<\beta$.
The path $Q'$ in $\widetilde{T}^{2}$ together with its descendants is illustrated in Figure~\ref{fig:caseIII_structure}.

\begin{figure}[ht]
\centering
\begin{tikzpicture}[scale=0.6,
    every node/.style={font=\small}]

\node[circle,fill=black,inner sep=1.2pt] (r) at (0,0) {};
\node[above] at (r) {$r=n-\ell+2$};

\node[circle,fill=black,inner sep=1.2pt] (np1) at (-4,-1.2) {};
\node[left] at (np1) {$n-\ell+1$};

\node[circle,fill=black,inner sep=1.2pt] (b1) at (0,-2) {};
\node[right] at (b1) {$b_1'$};

\node[circle,fill=black,inner sep=1.2pt] (b2) at (0,-4) {};
\node[right] at (b2) {$b_2'$};

\node[circle,fill=black,inner sep=1.2pt] (bb) at (0,-7) {};
\node[right] at (bb) {$b_\beta'$};

\node[circle,fill=black,inner sep=1.2pt] (one) at (0,-9) {};
\node[right] at (one) {$1$};

\draw (r)--(np1);
\draw (r)--(b1);
\draw (b1)--(b2);
\draw[dashed] (b2)--(bb);
\draw (bb)--(one);

\node[draw,rectangle,
      minimum width=1cm,
      minimum height=0.5cm]
(Fr) at (4,-1.2)
{$\F_{(r)}(\widetilde T^2)$};
\draw (r)--(Fr);

\node[draw,rectangle,
      minimum width=1cm,
      minimum height=0.5cm]
(Fb1) at (-3,-3)
{$\F_{(b_1')}(\widetilde T^2)$};
\draw (b1)--(Fb1);

\node[draw,rectangle,
      minimum width=1cm,
      minimum height=0.5cm]
(Fb2) at (-3,-5)
{$\F_{(b_2')}(\widetilde T^2)$};
\draw (b2)--(Fb2);

\node[draw,rectangle,
      minimum width=1cm,
      minimum height=0.5cm]
(Fbb) at (-3,-8)
{$\F_{(b_\beta')}(\widetilde T^2)$};
\draw (bb)--(Fbb);

\node[draw,rectangle,
      minimum width=1cm,
      minimum height=0.5cm]
(F1) at (-3,-10)
{$\F_{1}(\widetilde T^2)$};
\draw (one)--(F1);

\end{tikzpicture}
\caption{The tree $\widetilde{T}^{2}$ with path $Q'$ and the associated descendant forests.}
\label{fig:caseIII_structure}
\end{figure}

\medskip
\noindent
We now divide Case~III into two subcases.

\medskip
\noindent 
$\textbf{Subcase III(a).}$ 
Suppose that for every $1\leq j\leq t$, all sons of $b_j'$ in the rooted forest $\mathcal{F}_{(b_j')}(\widetilde{T}^{2})$ are smaller than $b_j'$.
Let $\overline{\mathcal{F}}_{b_1'}(\widetilde{T}^{2})=b_1'\vee \mathcal{F}_{b_1'}(\widetilde{T}^{2})$ denote the rooted subtree of $\widetilde{T}^{2}$ consisting of $b_1'$ together with all of its descendants, rooted at $b_1'$. 
Consider the tree $\widetilde{T}^{3}$ obtained from $\widetilde{T}^2$, by removing the rooted subtree $\overline{\mathcal{F}}_{b_1'}(\widetilde{T}^{2})$ and attaching it to $n-\ell+1$, as illustrated in Figure~\ref{fig:T3_structure}.

\begin{figure}[ht]
\centering
\begin{tikzpicture}[scale=0.5,
    every node/.style={font=\small}]

\node[circle,fill=black,inner sep=1.2pt] (r) at (0,0) {};
\node[above] at (r) {$r=n-\ell+2$};

\node[circle,fill=black,inner sep=1.2pt] (np1) at (-2,-2) {};
\node[left] at (np1) {$n-\ell+1$};

\node[circle,fill=black,inner sep=1.2pt] (b1) at (-2,-4) {};
\node[right] at (b1) {$b_1'$};

\node[circle,fill=black,inner sep=1.2pt] (b2) at (-2,-6) {};
\node[right] at (b2) {$b_2'$};

\node[circle,fill=black,inner sep=1.2pt] (bb) at (-2,-9) {};
\node[right] at (bb) {$b_\beta'$};

\node[circle,fill=black,inner sep=1.2pt] (one) at (-2,-11) {};
\node[right] at (one) {$1$};

\draw (r)--(np1);
\draw (np1)--(b1);

\draw (b1)--(b2);
\draw[dashed] (b2)--(bb);
\draw (bb)--(one);

\node[draw,rectangle,
      minimum width=1cm,
      minimum height=0.5cm]
(Fr) at (2,-2)
{$\F_{(r)}(\widetilde T^2)$};

\draw (r)--(Fr);

\node[draw,rectangle,
      minimum width=1cm,
      minimum height=0.5cm]
(Fb1) at (-5,-5)
{$\F_{(b_1')}(\widetilde T^2)$};

\draw (b1)--(Fb1);

\node[draw,rectangle,
      minimum width=1cm,
      minimum height=0.5cm]
(Fb2) at (-5,-7)
{$\F_{(b_2')}(\widetilde T^2)$};

\draw (b2)--(Fb2);

\node[draw,rectangle,
      minimum width=1cm,
      minimum height=0.5cm]
(Fbb) at (-5,-10)
{$\F_{(b_\beta')}(\widetilde T^2)$};

\draw (bb)--(Fbb);

\node[draw,rectangle,
      minimum width=1cm,
      minimum height=0.5cm]
(F1) at (-5,-12)
{$\F_{1}(\widetilde T^2)$};

\draw (one)--(F1);

\node[
draw,
dashed,
rounded corners,
inner sep=0.45cm,
fit=(one) (Fb1) (Fb2) (Fbb) (F1)
] (Qbox) {};

\node[left=0.4cm] at (Qbox.west)
{$\overline{\mathcal{F}}_{b_1'}(\widetilde{T}^{2})$};

\end{tikzpicture}
\caption{Illustration of the tree $\widetilde{T}^{3}$.}
\label{fig:T3_structure}
\end{figure}

\noindent

Next, remove the forests $\F_{(b_j')}(\widetilde{T}^2)$ for $1\leq j\leq t$ from $\widetilde{T}^3$. 
Then attach $\F_{(b_1')}(\widetilde{T}^2)$ to $n-\ell+1$, and, for each $1<j\leq t$, attach $\F_{(b_j')}(\widetilde{T}^2)$ to $b_{j-1}'$, thereby obtaining $\widetilde{T}^4$ (see Figure~\ref{fig:T4_structure}).

\begin{figure}[ht]
\centering

\begin{tikzpicture}[scale=0.4,
    every node/.style={font=\small}]

\node[circle,fill=black,inner sep=1.2pt] (r) at (0,0) {};
\node[above] at (r) {$r$};

\node[circle,fill=black,inner sep=1.2pt] (nl1) at (-3,-1) {};
\node[above left] at (nl1) {$n-\ell+1$};

\node[circle,fill=black,inner sep=1.2pt] (b1) at (-3,-3) {};
\node[right] at (b1) {$b_1'$};

\node[circle,fill=black,inner sep=1.2pt] (btm1) at (-3,-6) {};
\node[right] at (btm1) {$b_{t-1}'$};

\node[circle,fill=black,inner sep=1.2pt] (bt) at (-3,-8) {};
\node[right] at (bt) {$b_t'$};

\node[circle,fill=black,inner sep=1.2pt] (btp1) at (-3,-10) {};
\node[right] at (btp1) {$b_{t+1}'$};

\node[circle,fill=black,inner sep=1.2pt] (beta) at (-3,-13) {};
\node[right] at (beta) {$b_\beta'$};

\node[circle,fill=black,inner sep=1.2pt] (one) at (-3,-15) {};
\node[right] at (one) {$1$};

\draw (r)--(nl1)--(b1);

\draw[dashed] (b1)--(btm1);

\draw (btm1)--(bt)--(btp1);

\draw[dashed] (btp1)--(beta);

\draw (beta)--(one);

\node[draw,rectangle,
      minimum width=1cm,
      minimum height=0.5cm]
(Fr) at (4.2,-2)
{$\F_{(r)}(\widetilde T^2)$};

\draw (r)--(Fr);

\node[draw,rectangle,
      minimum width=1cm,
      minimum height=0.5cm]
(Fb1) at (-7,-2.3)
{$\F_{(b_1')}(\widetilde T^2)$};

\draw (nl1)--(Fb1);

\node[draw,rectangle, minimum width=1cm, minimum height=0.5cm] (Fbt) at (-7,-4.5) {$\F_{(b_2')}(\widetilde T^2)$};

\draw (b1)--(Fbt);

\node[draw,rectangle, minimum width=1cm, minimum height=0.5cm] (Fbtm1) at (-7,-7.3) {$\F_{(b_t')}(\widetilde T^2)$};

\draw (btm1)--(Fbtm1);

\node[draw,rectangle, minimum width=1cm, minimum height=0.5cm] (Fbtp1) at (-7,-11.2) {$\F_{(b_{t+1}')}(\widetilde T^2)$};

\draw (btp1)--(Fbtp1);

\node[draw,rectangle, minimum width=1cm, minimum height=0.5cm] (Fbeta) at (-7,-14) {$\F_{(b_\beta')}(\widetilde T^2)$};

\draw (beta)--(Fbeta);

\node[draw,rectangle, minimum width=1cm, minimum height=0.5cm] (Fone) at (-7,-16.3) {$\F_{1}(\widetilde T^2)$};

\draw (one)--(Fone);

\end{tikzpicture}

\caption{The tree $\widetilde{T}^{4}$.}
\label{fig:T4_structure}
\end{figure}

\noindent
Let $\overline{\F}_{b_{t+1}'}(\widetilde{T}^{4})=
b_{t+1}'\vee\F_{b_{t+1}'}(\widetilde{T}^{4})$ denote the rooted subtree consisting of the vertex $b_{t+1}'$ together with all of its descendants. 
Observe that $\overline{\F}_{b_{t+1}'}(\widetilde{T}^{4})=\overline{\F}_{b_{t+1}'}(\widetilde{T}^{2})$.
By removing $\overline{\F}_{b_{t+1}'}(\widetilde{T}^4)$ and attaching it to $b_{t-1}'$ in $\widetilde{T}^{4}$, we obtain $\widetilde{T}^{5}$, as illustrated in Figure~\ref{fig:T5_structure}.

\begin{figure}[ht]
\centering

\begin{subfigure}[t]{0.48\textwidth}
\centering
\begin{tikzpicture}[scale=0.7,
    every node/.style={font=\small}]


\node[circle,fill=black,inner sep=1.2pt] (r) at (0,0) {};
\node[above] at (r) {$r$};

\node[circle,fill=black,inner sep=1.2pt] (nl1) at (-2,-1) {};
\node[left] at (nl1) {$n-\ell+1$};

\node[circle,fill=black,inner sep=1.2pt] (b1) at (-2,-3) {};
\node[right] at (b1) {$b_1'$};

\node[circle,fill=black,inner sep=1.2pt] (btm1) at (-2,-6) {};
\node[right] at (btm1) {$b_{t-1}'$};

\node[circle,fill=black,inner sep=1.2pt] (bt) at (-2,-8) {};
\node[right] at (bt) {$b_t'$};

\draw (r)--(nl1)--(b1);
\draw[dashed] (b1)--(btm1);
\draw (btm1)--(bt);

\node[draw,rectangle, minimum width=1cm, minimum height=0.5cm] (Fr) at (3,-1.3) {$\F_{(r)}(\widetilde T^2)$};
\draw (r)--(Fr);

\node[draw,rectangle, minimum width=1cm, minimum height=0.5cm] (Fnl1) at (-4.5,-2.5) {$\F_{(b_1')}(\widetilde T^2)$};
\draw (nl1)--(Fnl1);

\node[draw,rectangle, minimum width=1cm, minimum height=0.5cm] (Fb1) at (-4.5,-4.5) {$\F_{(b_2')}(\widetilde T^2)$};
\draw (b1)--(Fb1);

\node[draw,rectangle, minimum width=1cm, minimum height=0.5cm] (Fbtm1) at (-4.5,-7) {$\F_{(b_t')}(\widetilde T^2)$};
\draw (btm1)--(Fbtm1);

\node[draw,rectangle, minimum width=1cm, minimum height=0.5cm] (oF) at (1,-7.5) {$\overline{\F}_{(b_{t+1}')}(\widetilde T^2)$};
\draw (btm1)--(oF);

\end{tikzpicture}
\caption{The tree $\widetilde T^5$.}
\end{subfigure}
\hfill
\begin{subfigure}[t]{0.48\textwidth}
\centering
\begin{tikzpicture}[scale=0.6,
    every node/.style={font=\small}]


\node[circle,fill=black,inner sep=1.2pt] (btp1) at (0,1) {};
\node[above] at (btp1) {$b_{t+1}'$};

\node[circle,fill=black,inner sep=1.2pt] (btp2) at (0,-1) {};
\node[right] at (btp2) {$b_{t+2}'$};

\node[circle,fill=black,inner sep=1.2pt] (beta) at (0,-4) {};
\node[right] at (beta) {$b_\beta'$};

\node[circle,fill=black,inner sep=1.2pt] (one) at (0,-6) {};
\node[right] at (one) {$1$};

\draw (btp1)--(btp2);
\draw[dashed] (btp2)--(beta);
\draw (beta)--(one);

\node[draw,rectangle, minimum width=1cm, minimum height=0.5cm] (Fbtp1) at (-3,0) {$\F_{(b_{t+1}')}(\widetilde T^2)$};
\draw (btp1)--(Fbtp1);

\node[draw,rectangle, minimum width=1cm, minimum height=0.5cm] (Fbtp2) at (-3,-2) {$\F_{(b_{t+2}')}(\widetilde T^2)$};
\draw (btp2)--(Fbtp2);

\node[draw,rectangle, minimum width=1cm, minimum height=0.5cm] (Fbeta) at (-3,-5) {$\F_{(b_\beta')}(\widetilde T^2)$};
\draw (beta)--(Fbeta);

\node[draw,rectangle, minimum width=1cm, minimum height=0.5cm] (Fone) at (-3,-7) {$\F_{1}(\widetilde T^2)$};
\draw (one)--(Fone);

\end{tikzpicture}
\caption{The rooted subtree
$\overline{\F}_{(b_{t+1}')}(\widetilde T^2)$.}
\end{subfigure}

\caption{The structure of the tree $\widetilde T^5$.}
\label{fig:T5_structure}

\end{figure}

\noindent
If $\son_{\widetilde{T}}(1)\cap F_{\ell}=\emptyset$, then $\widetilde{T}^5\in\Cnllminustwo$, and we define $\psi(\widetilde{T})=\widetilde{T}^5$. 
Otherwise, suppose that $\son_{\widetilde{T}}(1)\cap F_{\ell}\neq \emptyset$.
In this case, we consider $\widetilde{T}^6=\widetilde{T}^5\bigl[
\F_1(\widetilde{T}^2)\to b_t'\bigr]$,
obtained by attaching descendants of $1$ to the leaf $b_t'$ in $\widetilde{T}^{5}$.
Clearly, $\widetilde{T}^{6}$ satisfies the hypotheses of Proposition~\ref{prop:T*_in_Cnlp}. 
Thus, by applying Proposition~\ref{prop:T*_in_Cnlp}, we obtain a tree $(\widetilde{T}^6)^*\in\Cnllminustwo$, and we define $\psi(\widetilde{T})=(\widetilde{T}^6)^*$.

\medskip
\noindent 
$\textbf{Subcase III(b).}$ 
Let $p\in[t]=\{1,2,\dots , t\}$ be the least integer such that the largest son of $b_p'$ in $\F_{(b_p')}(\widetilde{T}^2)$ is a vertex $c>b_p'$. 
Proceeding as in Subcase~III(a), we construct $\widetilde{T}^3$.
The rooted subtree $\overline{\F}_{(b_p')}(\widetilde{T}^2) =b_p'\vee\F_{(b_p')}(\widetilde{T}^2)$ has the form illustrated in Figure~\ref{fig:subcaseIIIb_structure}.

\begin{figure}[ht]
\centering
\begin{tikzpicture}[scale=1,
    every node/.style={font=\small}]

\node[circle, fill=black, inner sep=1.2pt] (bp) at (2,0) {};
\node[above] at (bp) {$b_p'$};

\node[draw, rectangle, minimum width=1cm, minimum height=0.5cm]
(Fbp) at (2,-1.8) {$\F_{(b_p')}(\widetilde{T}^2)$};
\draw (bp)--(Fbp);

\node at (2,-3.6)
{$\overline{\F}_{(b_p')}(\widetilde{T}^2)$};

\node at (4,-1)
{$=$};

\node[circle, fill=black, inner sep=1.2pt] (r) at (8,0) {};
\node[above] at (r) {$b_p'$};

\node[circle, fill=black, inner sep=1.2pt] (c) at (6,-1.6) {};
\node[left] at (c) {$c$};

\node[draw, rectangle,
      minimum width=1cm,
      minimum height=0.5cm]
(Fc) at (6,-3.2)
{$\F_{c}(\widetilde{T}^2)$};

\draw (c)--(Fc);

\node[draw, rectangle,
      minimum width=1cm,
      minimum height=0.5cm]
(Fprime) at (9.3,-1.7)
{$\F'_{(b_p')}(\widetilde{T}^2)$};

\draw (r)--(c);
\draw (r)--(Fprime);


\node[
draw,
dashed,
rounded corners,
inner sep=0.35cm,
fit=(c) (Fc) (Fprime)
] (Fbox) {};

\node[right=0.25cm] at (Fbox.east)
{$\F_{(b_p')}(\widetilde{T}^2)$};

\end{tikzpicture}
\caption{Illustration of the rooted subtree $\overline{\F}_{(b_p')}(\widetilde{T}^2) =b_p'\vee\F_{(b_p')}(\widetilde{T}^2)$.} 
\label{fig:subcaseIIIb_structure}
\end{figure}

\noindent
In $\widetilde{T}^3$, we now remove the forests $\F_{(b_j')}(\widetilde{T}^2)$ for $1\leq j\leq p$. 
Then attach $\F_{(b_1')}(\widetilde{T}^2)$ to $n-\ell+1$, and, for each $1<j\leq p$, attach $\F_{(b_j')}(\widetilde{T}^2)$ to $b_{j-1}'$. 
Finally, remove $\F_{c}(\widetilde{T}^2)$ and attach it to $b_p'$, thereby obtaining $\widetilde{T}^7$ (see Figure~\ref{fig:T7_structure}).

\begin{figure}[ht]
\centering

\begin{subfigure}[t]{0.48\textwidth}
\centering
\begin{tikzpicture}[scale=0.75,
    every node/.style={font=\small}]


\node[circle,fill=black,inner sep=1.2pt] (r) at (0,0) {};
\node[above] at (r) {$r=n-\ell+2$};

\node[circle,fill=black,inner sep=1.2pt] (nl1) at (-1,-2) {};
\node[left] at (nl1) {$n-\ell+1$};

\node[circle,fill=black,inner sep=1.2pt] (b1) at (-1,-4) {};
\node[right] at (b1) {$b_1'$};

\node[circle,fill=black,inner sep=1.2pt] (bp2) at (-1,-7) {};
\node[right] at (bp2) {$b_{p-2}'$};

\node[circle,fill=black,inner sep=1.2pt] (bpm1) at (-1,-9) {};
\node[right] at (bpm1) {$b_{p-1}'$};

\node[circle,fill=black,inner sep=1.2pt] (c) at (-1,-11) {};
\node[right] at (c) {$c$};

\draw (r)--(nl1)--(b1);
\draw[dashed] (b1)--(bp2);
\draw (bp2)--(bpm1)--(c);

\node[draw,rectangle,minimum width=1cm,minimum height=0.5cm]
(Fr) at (3,-2)
{$\F_{(r)}(\widetilde T^2)$};
\draw (r)--(Fr);

\node[draw,rectangle,minimum width=1cm,minimum height=0.5cm]
(Fnl1) at (-3.5,-3)
{$\F_{(b_1')}(\widetilde T^2)$};
\draw (nl1)--(Fnl1);

\node[draw,rectangle,minimum width=1cm,minimum height=0.5cm]
(Fb1) at (-3.5,-5)
{$\F_{(b_2')}(\widetilde T^2)$};
\draw (b1)--(Fb1);

\node[draw,rectangle,minimum width=1cm,minimum height=0.5cm]
(Fp2) at (-3.5,-8)
{$\F_{(b_{p-1}')}(\widetilde T^2)$};
\draw (bp2)--(Fp2);

\node[draw,rectangle,minimum width=1cm,minimum height=0.5cm]
(Fprime) at (-3.5,-11)
{$\F'_{(b_p')}(\widetilde T^2)$};
\draw (bpm1)--(Fprime);

\node[draw,rectangle,minimum width=1cm,minimum height=0.5cm]
(Tbp) at (2,-11)
{$\widetilde T(b_p')$};
\draw (bpm1)--(Tbp);

\end{tikzpicture}
\caption{The tree $\widetilde T^7$.}
\end{subfigure}
\hfill
\begin{subfigure}[t]{0.48\textwidth}
\centering
\begin{tikzpicture}[scale=0.75,
    every node/.style={font=\small}]


\node[circle,fill=black,inner sep=1.2pt] (bp) at (0,1) {};
\node[above] at (bp) {$b_p'$};

\node[circle,fill=black,inner sep=1.2pt] (bpp1) at (0,-1) {};
\node[right] at (bpp1) {$b_{p+1}'$};

\node[circle,fill=black,inner sep=1.2pt] (beta) at (0,-4) {};
\node[right] at (beta) {$b_\beta'$};

\node[circle,fill=black,inner sep=1.2pt] (one) at (0,-6) {};
\node[right] at (one) {$1$};

\draw (bp)--(bpp1);
\draw[dashed] (bpp1)--(beta);
\draw (beta)--(one);

\node[draw,rectangle,minimum width=1cm,minimum height=0.5cm]
(Fc) at (-3,0)
{$\F_c(\widetilde T^2)$};
\draw (bp)--(Fc);

\node[draw,rectangle,minimum width=1cm,minimum height=0.5cm]
(Fbpp1) at (-3,-2)
{$\F_{(b_{p+1}')}(\widetilde T^2)$};
\draw (bpp1)--(Fbpp1);

\node[draw,rectangle,minimum width=1cm,minimum height=0.5cm]
(Fbeta) at (-3,-5)
{$\F_{(b_\beta')}(\widetilde T^2)$};
\draw (beta)--(Fbeta);

\node[draw,rectangle,minimum width=1cm,minimum height=0.5cm]
(F1) at (-3,-7)
{$\F_{1}(\widetilde T^2)$};
\draw (one)--(F1);

\end{tikzpicture}
\caption{The tree $\widetilde T(b_p')$.}
\end{subfigure}

\caption{The structure of the tree $\widetilde T^7$.}
\label{fig:T7_structure}

\end{figure}

\noindent
Observe that $\F_{1}(\widetilde{T}^{7})=\F_{1}(\widetilde{T}^{2})$.
If $\son_{\widetilde{T}^7}(1)\cap F_{\ell}=\emptyset$, then $\widetilde{T}^7\in\Cnllminustwo$, and we define $\psi(\widetilde{T})=\widetilde{T}^7$. 
Otherwise, if $\son_{\widetilde{T}^{7}}(1)\cap F_{\ell}\neq \emptyset$, then we consider the tree $\widetilde{T}^8=\widetilde{T}^7\bigl[\F_1(\widetilde{T}^7)\to c\bigr],$
obtained by attaching descendants of $1$ in $\widetilde{T}^7$ to the leaf $c$. 
Since $\widetilde{T}^8$ satisfies the hypotheses of Proposition~\ref{prop:T*_in_Cnlp}, we obtain $(\widetilde{T}^8)^*\in\Cnllminustwo$.
In this case, we define $\psi(\widetilde{T})=(\widetilde{T}^8)^*$.

\medskip
\noindent
We now characterize the trees $T\in \Cnllminustwo$ that arise from Case~III.
Let $T\in \Cnllminustwo$, and let $\Pmax(T)$  
\[
r=n-\ell+2=a_0\to a_1\to \cdots \to a_{\alpha},
\]
be the maximum path of $T$ in $F_{\ell}$ as in~\eqref{eq:Pmax_path}.
Suppose that $1$ is a descendant of $a_t$ for some $1\leq t<\alpha$ but is not a descendant of $a_{t+1}$.
Since $1$ is a descendant of $a_t$, there exists a unique path $Q'$ of the form
\begin{equation}\label{eq:QprimeT}
Q': \quad a_t=a_0'\to a_1'\to \cdots \to a_q'\to 1,
\qquad (q\geq 1),
\end{equation}
in $T$.
Recall that, for the maximum path $\Pmax(T)$, $\mathcal{F}_{a_i}(T)$ denotes the rooted forest consisting of all descendants of $a_i$ in $T$. Then $\mathcal{F}_{(a_i)}(T)$ denotes the rooted forest obtained from $\mathcal{F}_{a_i}(T)$ by removing the subtree rooted at $a_{i+1}$.
We divide the analysis into two subclasses.

\medskip
\noindent\textbf{Subclass (1).}
Suppose that the son of $a_t$ lying on the path $Q'$ does not belong to $F_{\ell}$, i.e., $a_1'\notin F_{\ell}$.
Then we consider the tree $T^{3}= T\bigl[\F_1(T)\to a_{\alpha}\bigr],$
obtained by attaching descendants of $1$ to the leaf $a_{\alpha}$ in $T$. 
Suppose that the rooted subtree $\overline{\F}_{(a_t)}(T^{3})=a_t\vee \mathcal{F}_{(a_t)}(T^{3})$ has the form illustrated in Figure~\ref{fig:subclass1_structure}.

\begin{figure}[ht]
\centering
\begin{tikzpicture}[scale=0.5,
    every node/.style={font=\small}]

\node[circle,fill=black,inner sep=1.2pt] (at) at (0,0) {};
\node[above] at (at) {$a_t=a_0'$};

\node[circle,fill=black,inner sep=1.2pt] (a1) at (0,-2) {};
\node[right] at (a1) {$a_1'$};

\node[circle,fill=black,inner sep=1.2pt] (a2) at (0,-4) {};
\node[right] at (a2) {$a_2'$};

\node[circle,fill=black,inner sep=1.2pt] (aq) at (0,-7) {};
\node[right] at (aq) {$a_q'$};

\node[circle,fill=black,inner sep=1.2pt] (one) at (0,-9) {};
\node[right] at (one) {$1$};

\draw (at)--(a1)--(a2);
\draw[dashed] (a2)--(aq);
\draw (aq)--(one);

\node[draw,rectangle,
      minimum width=1cm,
      minimum height=0.5cm]
(Fat) at (-4,-1)
{$\F'_{(a_t)}(\widetilde T^3)$};
\draw (at)--(Fat);

\node[draw,rectangle,
      minimum width=1cm,
      minimum height=0.5cm]
(Fa1) at (-4,-3)
{$\F_{(a_1')}(\widetilde T^3)$};
\draw (a1)--(Fa1);

\node[draw,rectangle,
      minimum width=1cm,
      minimum height=0.5cm]
(Fa2) at (-4,-5)
{$\F_{(a_2')}(\widetilde T^3)$};
\draw (a2)--(Fa2);

\node[draw,rectangle,
      minimum width=1cm,
      minimum height=0.5cm]
(Faq) at (-4,-8)
{$\F_{(a_q')}(\widetilde T^3)$};
\draw (aq)--(Faq);

\end{tikzpicture}
\caption{Structure of the rooted subtree $\overline{\F}_{(a_t)}(T^{3})$ in $T^{3}$.}
\label{fig:subclass1_structure}
\end{figure}

\noindent
Let $\mathcal{F}_{a_{t+1}}(T^{3})$ denote the rooted forest consisting of all descendants of $a_{t+1}$ in $T^{3}$.
Now consider the tree $T^{4}= T^{3}\bigl[\F_{a_{t+1}}(T^3)\to 1\bigr],$
obtained by attaching all descendants of $a_{t+1}$ to the leaf $1$ in $T^{3}$. 
The tree $T^{4}$ has the form as shown in Figure~\ref{fig:T4_caseIII}.

\begin{figure}[ht]
\centering

\begin{subfigure}[t]{0.48\textwidth}
\centering
\begin{tikzpicture}[scale=0.66,
    every node/.style={font=\small}]


\node[circle,fill=black,inner sep=1.2pt] (r) at (0,0) {};
\node[above] at (r) {$r=n-\ell+2$};

\node[circle,fill=black,inner sep=1.2pt] (a1) at (-1,-2) {};
\node[left] at (a1) {$a_1=n-\ell+1$};

\node[circle,fill=black,inner sep=1.2pt] (a2) at (-1,-4) {};
\node[right] at (a2) {$a_2$};

\node[circle,fill=black,inner sep=1.2pt] (atm1) at (-1,-7) {};
\node[right] at (atm1) {$a_{t-1}$};

\node[circle,fill=black,inner sep=1.2pt] (at) at (-1,-9) {};
\node[right] at (at) {$a_t$};

\node[circle,fill=black,inner sep=1.2pt] (atp1) at (-1,-11) {};
\node[right] at (atp1) {$a_{t+1}$};

\draw (r)--(a1)--(a2);
\draw[dashed] (a2)--(atm1);
\draw (atm1)--(at)--(atp1);


\node[draw,rectangle,
minimum width=1cm,
minimum height=0.5cm]
(Fr) at (3,-2)
{$\F_{(r)}(T^3)$};
\draw (r)--(Fr);

\node[draw,rectangle,
minimum width=1cm,
minimum height=0.5cm]
(Fa1) at (-3,-3.5)
{$\F_{(a_1)}(T^3)$};
\draw (a1)--(Fa1);

\node[draw,rectangle,
minimum width=1cm,
minimum height=0.5cm]
(Fa2) at (-3,-5.5)
{$\F_{(a_2)}(T^3)$};
\draw (a2)--(Fa2);

\node[draw,rectangle,
minimum width=1cm,
minimum height=0.5cm]
(Fatm1) at (-3,-8.5)
{$\F_{(a_{t-1})}(T^3)$};
\draw (atm1)--(Fatm1);

\node[draw,rectangle,
minimum width=1cm,
minimum height=0.5cm]
(Fprime) at (-3,-10.5)
{$\F'_{(a_t)}(T^3)$};
\draw (at)--(Fprime);

\node[draw,rectangle,
minimum width=1cm,
minimum height=0.5cm]
(Fbar) at (2,-10.5)
{$\overline{\F}_{(a_1')}(\widetilde T^3)$};
\draw (at)--(Fbar);

\end{tikzpicture}
\caption{The tree $T^4$.}
\end{subfigure}
\hfill
\begin{subfigure}[t]{0.48\textwidth}
\centering
\begin{tikzpicture}[scale=0.55,
    every node/.style={font=\small}]


\node[circle,fill=black,inner sep=1.2pt] (a1) at (0,0) {};
\node[above] at (a1) {$a_1'$};

\node[circle,fill=black,inner sep=1.2pt] (a2) at (0,-2) {};
\node[right] at (a2) {$a_2'$};

\node[circle,fill=black,inner sep=1.2pt] (aq) at (0,-5) {};
\node[right] at (aq) {$a_q'$};

\node[circle,fill=black,inner sep=1.2pt] (one) at (0,-7) {};
\node[right] at (one) {$1$};

\draw (a1)--(a2);
\draw[dashed] (a2)--(aq);
\draw (aq)--(one);

\node[draw,rectangle,
minimum width=1cm,
minimum height=0.5cm]
(Fa1p) at (-3,-1)
{$\F_{(a_1')}(\widetilde T^3)$};
\draw (a1)--(Fa1p);

\node[draw,rectangle,
minimum width=1cm,
minimum height=0.5cm]
(Fa2p) at (-3,-3)
{$\F_{(a_2')}(\widetilde T^3)$};
\draw (a2)--(Fa2p);

\node[draw,rectangle,
minimum width=1cm,
minimum height=0.5cm]
(Faqp) at (-3,-6)
{$\F_{(a_q')}(\widetilde T^3)$};
\draw (aq)--(Faqp);

\node[draw,rectangle,
minimum width=1cm,
minimum height=0.5cm]
(F1) at (-3,-8)
{$\F_{a_{t+1}}(\widetilde T^3)$};
\draw (one)--(F1);

\end{tikzpicture}
\caption{The rooted subtree $\overline{\F}_{(a_1')}(\widetilde T^3)$.}
\end{subfigure}

\caption{Illustration of the structure of $T^4$.}
\label{fig:T4_caseIII}

\end{figure}

\noindent

Next, we remove the forests $\mathcal{F}'_{(a_t)}(T^{3})$, $\overline{\mathcal{F}}_{a_1'}(T^{3})$, and $\mathcal{F}_{(a_i)}(T^{3})$ for $1\leq i<t$ from $T^{4}$. Then attach $\mathcal{F}'_{(a_t)}(T^{3})$ and $\overline{\mathcal{F}}_{a_1'}(T^{3})$ to $a_{t+1}$. 
Then, for each $1\leq i<t$, attach $\mathcal{F}_{(a_i)}(T^{3})$ to $a_{i+1}$. The resulting tree $T^{5}$ is illustrated in Figure~\ref{fig:T5_caseIII}.

\begin{figure}[ht]
\centering
\begin{tikzpicture}[scale=0.6,
    every node/.style={font=\small}]

\node[circle,fill=black,inner sep=1.2pt] (r) at (0,0) {};
\node[above] at (r) {$r=n-\ell+2$};

\node[circle,fill=black,inner sep=1.2pt] (a1) at (-2,-2) {};
\node[left] at (a1) {$a_1=n-\ell+1$};

\node[circle,fill=black,inner sep=1.2pt] (a2) at (-2,-4) {};
\node[left] at (a2) {$a_2$};

\node[circle,fill=black,inner sep=1.2pt] (atm1) at (-2,-7) {};
\node[left] at (atm1) {$a_{t-1}$};

\node[circle,fill=black,inner sep=1.2pt] (at) at (-2,-9) {};
\node[left] at (at) {$a_t$};

\node[circle,fill=black,inner sep=1.2pt] (atp1) at (-2,-11) {};
\node[right] at (atp1) {$a_{t+1}$};

\draw (r)--(a1);
\draw (a1)--(a2);

\draw[dashed] (a2)--(atm1);

\draw (atm1)--(at);
\draw (at)--(atp1);

\node[draw,rectangle,minimum width=1cm, minimum height=0.5cm]
(Fr) at (3,-1.5)
{$\F_{(r)}(T^3)$};
\draw (r)--(Fr);

\node[draw,rectangle,minimum width=1cm, minimum height=0.5cm]
(Fa1) at (-5,-5.5)
{$\F_{(a_1)}(T^3)$};
\draw (a2)--(Fa1);

\node[draw,rectangle,minimum width=1cm, minimum height=0.5cm]
(Fatm2) at (-5,-8.5)
{$\F_{(a_{t-2})}(T^3)$};
\draw (atm1)--(Fatm2);

\node[draw,rectangle,minimum width=1cm, minimum height=0.5cm]
(Fatm1) at (-5,-10.5)
{$\F_{(a_{t-1})}(T^3)$};
\draw (at)--(Fatm1);

\node[draw,rectangle,minimum width=1cm, minimum height=0.5cm]
(Fprime) at (-6,-13)
{$\F'_{(a_t)}(T^3)$};
\draw (atp1)--(Fprime);

\node[draw,rectangle,minimum width=1cm, minimum height=0.5cm]
(Fbar) at (3,-13)
{$\overline{\F}_{(a'_1)}(\widetilde{T}^3)$};
\draw (atp1)--(Fbar);

\end{tikzpicture}
\caption{The tree $T^{5}$.}
\label{fig:T5_caseIII}
\end{figure}

\noindent
Now, consider the rooted subtree $\overline{\mathcal{F}}_{a_2}(T^{5}),$ consisting of the vertex $a_2$ together with all its descendants in $T^{5}$. 
By removing $\overline{\mathcal{F}}_{a_2}(T^{5})$ from $T^{5}$ and attaching it to the root $r=n-\ell+2$ in $T^{5}$, we obtain $T^{6}$, illustrated in Figure~\ref{fig:T6_caseIII}.

\begin{figure}[ht]
\centering
\begin{tikzpicture}[scale=0.8,
    every node/.style={font=\small}]

\node[circle, fill=black, inner sep=1.2pt] (r) at (8,0) {};
\node[above] at (r) {$r=n-\ell+2$};

\node[circle, fill=black, inner sep=1.2pt] (b) at (4,-1.6) {};
\node[left] at (b) {$a_1=n-\ell+1$};

\node[draw, rectangle, minimum width=1cm, minimum height=0.5cm]
(F) at (8,-1.7) {$\F_{(r)}(T^3)$};
\draw (r)--(F);

\node[draw, rectangle, minimum width=1cm, minimum height=0.5cm]
(F) at (12,-1.7) {$\overline{\F}_{a_2}(T^5)$};

\draw (r)--(b);
\draw (r)--(F);

\end{tikzpicture}
\caption{ The tree $T^{6}$.} 
\label{fig:T6_caseIII}
\end{figure}

\noindent
Finally, by interchanging the vertices $a_2$ and $a_q'$ in $T^{6}$, we obtain a tree $T^{7}\in \Tnl$.
Reversing the above constructions, we see that $\psi(T^{7})=T$.

\medskip
\noindent\textbf{Subclass (2).}
Suppose that the son of $a_t=a_0'$ lying on the path $Q'$ belongs to $F_{\ell}$; that is, $a_1'\in F_{\ell}$.
Proceeding as in Subclass~(1), we first construct the tree $T^{4}$. From $T^{4}$, we then construct a tree $T^{8}$ of the form illustrated in Figure~\ref{fig:T8_caseIII}.

\begin{figure}[ht]
\centering
\begin{tikzpicture}[scale=0.6,
    every node/.style={font=\small}]

\node[circle,fill=black,inner sep=1.2pt] (r) at (0,0) {};
\node[above] at (r) {$r$};

\node[circle,fill=black,inner sep=1.2pt] (a1) at (-2,-2) {};
\node[left] at (a1) {$n-\ell+1$};

\node[circle,fill=black,inner sep=1.2pt] (a2) at (-2,-4) {};
\node[left] at (a2) {$a_2$};

\node[circle,fill=black,inner sep=1.2pt] (am) at (-2,-7) {};
\node[left] at (am) {$a_{t-1}$};

\node[circle,fill=black,inner sep=1.2pt] (at) at (-2,-9) {};
\node[left] at (at) {$a_t$};

\node[circle,fill=black,inner sep=1.2pt] (ap1) at (-2,-11) {};
\node[left] at (ap1) {$a_1'$};

\node[circle,fill=black,inner sep=1.2pt] (ap2) at (-4,-13) {};
\node[right] at (ap2) {$a_2'$};

\node[circle,fill=black,inner sep=1.2pt] (apq) at (-4,-16) {};
\node[right] at (apq) {$a_q'$};

\node[circle,fill=black,inner sep=1.2pt] (one) at (-4,-18) {};
\node[right] at (one) {$1$};

\node[circle,fill=black,inner sep=1.2pt] (atp1) at (-0.9,-13.3) {};
\node[right] at (atp1) {$a_{t+1}$};

\draw (r)--(a1);
\draw (a1)--(a2);

\draw[dashed] (a2)--(am);

\draw (am)--(at)--(ap1);

\draw (ap1)--(ap2);
\draw[dashed] (ap2)--(apq);
\draw (apq)--(one);

\draw (ap1)--(atp1);

\node[draw,rectangle,minimum width=1cm, minimum height=0.5cm]
(Fr) at (2.5,-1.5)
{$\F_{(r)}(T^3)$};
\draw (r)--(Fr);

\node[draw,rectangle,minimum width=1cm, minimum height=0.5cm]
(Fa1) at (0.5,-5)
{$\F_{(a_1)}(T^3)$};
\draw (a2)--(Fa1);

\node[draw,rectangle,minimum width=1cm, minimum height=0.5cm]
(Fam) at (0.5,-8)
{$\F_{(a_{t-2})}(T^3)$};
\draw (am)--(Fam);

\node[draw,rectangle,minimum width=1cm, minimum height=0.5cm]
(Fat) at (0.5,-10)
{$\F_{(a_{t-1})}(T^3)$};
\draw (at)--(Fat);

\node[draw,rectangle,minimum width=1cm, minimum height=0.5cm]
(Fprime) at (0.5,-12)
{$\F'_{(a_t)}(T^3)$};
\draw (ap1)--(Fprime);

\node[draw,rectangle,minimum width=1cm, minimum height=0.5cm]
(Fa2p) at (-6.5,-14)
{$\F_{(a_2')}(T^3)$};
\draw (ap2)--(Fa2p);

\node[draw,rectangle,minimum width=1cm, minimum height=0.5cm]
(Faq) at (-6.5,-17)
{$\F_{(a_q')}(T^3)$};
\draw (apq)--(Faq);

\node[draw,rectangle,minimum width=1cm, minimum height=0.5cm]
(Fone) at (-6.5,-19)
{$\F_{a_{t+1}}(T^3)$};
\draw (one)--(Fone);

\node[draw,rectangle,minimum width=1cm, minimum height=0.5cm]
(Fatp1) at (2.3,-14.3)
{$\F_{(a_1')}(T^3)$};
\draw (atp1)--(Fatp1);

\end{tikzpicture}
\caption{The tree $T^{8}$.}
\label{fig:T8_caseIII}
\end{figure}

\noindent
Let $\overline{\mathcal{F}}_{a_2}(T^{8})$ denote the rooted subtree consisting of the vertex $a_2$ together with all its descendants in $T^{8}$. By removing $\overline{\mathcal{F}}_{a_2}(T^{8})$ from $T^{8}$ and attaching it to the root $r=n-\ell+2$, we obtain a tree $T^{9}$ from $T^{8}$.
Let $T^{10}$ be the tree obtained from $T^{9}$ by interchanging the vertices $a_2$ and $a_q'$. 
Clearly, $T^{10}\in \Tnl$.
Reversing the above constructions, we obtain $\psi(T^{10})=T.$

\medskip
\noindent
This shows that the map $\psi\colon \Tnl\longrightarrow \Cnllminustwo$ is well-defined and surjective.
We now briefly characterize the trees $T\in \Cnllminustwo$ arising in the above three cases. 
Let $\Pmax(T)\colon
n-\ell+2=a_0\to a_1\to \cdots \to a_{\alpha}$ be the maximum path of $T$ in $F_{\ell}$ as in \eqref{eq:Pmax_path}.

\begin{enumerate}
    \item If $\alpha=1$, then $T$ arises from Case~I.
    \item If $\alpha>1$ and $1$ is a descendant of $a_1=n-\ell+1$, then $T$ arises from Case~II.
    \item Suppose $\alpha>1$ and there exists $1\leq t<\alpha$ such that $1$ is a descendant of $a_t$ but not of $a_{t+1}$, then $T$ arises from Case~III. 
    Let $Q'\colon a_t=a_0'\to a_1'\to\cdots\to a_q'\to 1$ be the unique path from $a_t$ to $1$. 
    Then:
    \begin{itemize}
        \item if $a_1'\notin F_{\ell}$, then $T$ arises from 
        Subcase~III(a),
        \item if $a_1'\in F_{\ell}$, then $T$ arises from 
        Subcase~III(b).
    \end{itemize}
\end{enumerate}

Since the cases are mutually exclusive and exhaustive, and each 
$T\in\Cnllminustwo$ arises from exactly one $\widetilde{T}\in\Tnl$, 
the map $\psi$ is a bijection.
Therefore, $|\Cnllminustwo|= |\Tnl|$, and the required formula follows from Proposition~\ref{prop:Tnl_count}.
\end{proof}

Proposition~\ref{prop:Tnl_bijection} determines the cardinality of 
$\Cnllminustwo$. 
We now turn to the enumeration of $\Cnlp$ for $0\leq p<\ell-2$. 
For this purpose, we consider the sets $\Cnlminusonep$ and $\Cnpplustwop$.

By Proposition~\ref{prop:Cnlp_characterization}, a tree $T\in\Cnlminusonep$ if and only if $T\in\Un^{1\not\sim F_{\ell-1}}$ and conditions \ref{cond1} and \ref{cond2} are satisfied with 
$F_{\ell-1}=\{n-\ell+2,n-\ell+3,\ldots,n\}$.
In particular, the vertex $1$ is not adjacent to any vertex of $F_{\ell-1}$, but it is still possible that $1$ is adjacent to the vertex $n-\ell+1$ in $T$. 
Now, we define the subset
\[
\Tnlminusonep=\{T\in\Cnlminusonep : \Par_T(1)\notin F_{\ell}\}.
\]
Since $T\in\Cnlminusonep$ implies $\Par_T(1)\notin F_{\ell-1}$, the only possibility excluded by the condition $\Par_T(1)\notin F_{\ell}$ is $\Par_T(1)=n-\ell+1$. 
Thus, $\Tnlminusonep\subseteq\Cnlminusonep$ consists precisely of those trees $T$ for which $\Par_T(1)\neq n-\ell+1$.
Note that the vertex $n-\ell+1$ may still be a son of $1$ in such a tree.

Similarly, a tree $T\in\Cnpplustwop$ if and only if $T\in\mathcal{U}_n^{1\not\sim F_{p+2}}$ and conditions \ref{cond1} and \ref{cond2} of Proposition~\ref{prop:Cnlp_characterization} hold, with $F_{p+2}=\{n-p-1, n-p, \ldots,n\}$.
So, $\Par_T(1)\notin F_{p+2}$. 
We define
\[
\Tnpplustwop=\{T\in\Cnpplustwop : \Par_T(1)\notin F_{\ell}\}.
\]

For each $T\in\Tnpplustwop$, let $\widetilde{T}$ be the tree obtained from $T$ by interchanging the vertices $n-p-1$ and $n-\ell+1$ in $T$. 
We define
\[
\TildeTnpplustwop=\{\widetilde{T} : T\in\Tnpplustwop\}.
\]

Since $T\mapsto\widetilde{T}$ is a bijection from $\Tnpplustwop$ to $\TildeTnpplustwop$, we have $|\Tnpplustwop|=|\TildeTnpplustwop|$. 
Moreover, $\Tnlminusonep\cap\TildeTnpplustwop=\emptyset$.
Indeed, in any tree in $\Tnlminusonep$, the vertex $n-\ell+1$ is never a son of the root $n-p$. 
On the other hand, in every tree $\widetilde{T}\in\TildeTnpplustwop$, the vertex $n-\ell+1$ is a son of the root $n-p$. 
This follows from the fact that $\widetilde{T}$ is obtained by interchanging the vertices $n-p-1$ and $n-\ell+1$ in a tree of $\Tnpplustwop$, where $n-p-1$ is the largest son of the root $n-p$. Therefore, no tree can belong to both $\Tnlminusonep$ and $\TildeTnpplustwop$.

Now, we shall see that these subsets play a key role in the enumeration of $\Cnlp$.

\begin{proposition}\label{prop:Phi_p}
For $0\leq p<\ell-2$, there exists a bijection
\[
\Phi_p\colon \Tnlminusonep\coprod \TildeTnpplustwop \longrightarrow \Cnlp.
\]
Consequently, $|\Cnlp|=|\Tnlminusonep|+|\Tnpplustwop|.$
\end{proposition}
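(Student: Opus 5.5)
We construct $\Phi_p$ piece by piece and then describe its inverse by a case split on $\Cnlp$. The split is governed by the role of the vertex $n-\ell+1$, the only vertex in $F_\ell\setminus F_{\ell-1}$. Fix $0\le p<\ell-2$, so the root is $r=n-p$ and $n-\ell+1<n-p-1<n-p$.

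\emph{On $\Tnlminusonep$.} Take $T\in\Tnlminusonep$. By Proposition~\ref{prop:Cnlp_characterization} applied with $F_{\ell-1}$, its maximum path $\mathrm{P}_{\max}^{\ell-1}(T)\colon n-p=a_0\to\cdots\to a_\alpha$ stays in $F_{\ell-1}$ and ends at a leaf. Also $\Par_T(1)\ne n-\ell+1$, but $n-\ell+1$ may be a son of $1$.

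If $1\nsim n-\ell+1$, then $T\in\UnFl$, and I first check that $\Pmax(T)$ still ends at a leaf. This can fail only if $n-\ell+1$ is the largest $F_\ell$-son of some $a_i$. For $i<\alpha$ that would contradict the choice of $a_{i+1}$ as largest son, and for $i=\alpha$ it is impossible because $a_\alpha$ is a leaf. So $\Pmax(T)=\mathrm{P}_{\max}^{\ell-1}(T)$ and we set $\Phi_p(T)=T$.

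If $n-\ell+1\in\son_T(1)$, we set $T'=T[\F_1(T)\to a_\alpha]$. Now $1$ is a leaf, it is not a descendant of the new end of the maximum path, and it is non-adjacent to $F_\ell$. We then apply Proposition~\ref{prop:T*_in_Cnlp} (the operation $(\cdot)^*$), exactly as in Case~II of Proposition~\ref{prop:Tnl_bijection}, and let $\Phi_p(T)$ be the result.

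\emph{On $\TildeTnpplustwop$.} Take $\widetilde T$ coming from $T\in\Tnpplustwop$. In $\widetilde T$ the largest son of the root lies in $\{2,\dots,n-\ell\}\cup\{n-\ell+1\}$, and $n-\ell+1$ is a son of $n-p$. The $F_{p+2}$-maximum path of $T$ is $n-p\to n-p-1\to\cdots$, so in $\widetilde T$ it begins $n-p\to n-\ell+1$. The obstruction to membership in $\Cnlp$ is that the largest son $c$ of the root $n-p$ may lie in $\{n-\ell+2,\dots,n-p-1\}$, or that $1$ may be adjacent to vertices of $F_\ell\setminus F_{p+2}$.

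I would mimic the relabel-and-reattach moves of Case~III of Proposition~\ref{prop:Tnl_bijection}:
\begin{itemize}
\item[(i)] Swap labels along the path to $1$ so that $\Par(1)\notin F_\ell$.
\item[(ii)] Move forests hanging below $F_\ell$-vertices down one step so the chain of largest sons is preserved.
\item[(iii)] Finally use $(\cdot)^*$ to make the end of the maximum path a leaf.
\end{itemize}
The image should be exactly the trees in $\Cnlp$ whose maximum path passes through $n-\ell+1$ in a prescribed way, namely where $n-\ell+1$ is the first or the terminal vertex reached after leaving $F_{p+2}$. Trees of $\Cnlp$ whose maximum path avoids $n-\ell+1$ and satisfy $1\nsim n-\ell+1$ should be exactly the images from $\Tnlminusonep$.

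\emph{Inverse and main obstacle.} For the inverse, take $T\in\Cnlp$ with $\Pmax(T)\colon a_0\to\cdots\to a_\alpha$. The first branch is whether $n-\ell+1$ lies on $\Pmax(T)$. If it does not, then $T$ is the image of $T$ itself or of its un-$(\cdot)^*$ version in $\Tnlminusonep$, depending on whether $1$ is a descendant of some $a_t$ in the way Case~II describes. If it does, we reverse the $\TildeTnpplustwop$ construction, subdividing by the position $t$ of the last path vertex above $1$ and by whether the son toward $1$ lies in $F_\ell$. This mirrors Subclasses (1) and (2).

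The main obstacle is the second piece: proving that these cases are exhaustive and mutually exclusive and that each move is reversible. The delicate point is that swapping $n-p-1$ with $n-\ell+1$ can destroy the ``largest son'' property further down the path. I would handle this by a careful bookkeeping of which son is largest at each $a_i$ before and after each move. Once this is established, disjointness of $\Tnlminusonep$ and $\TildeTnpplustwop$ (noted before the statement) together with $|\TildeTnpplustwop|=|\Tnpplustwop|$ gives $|\Cnlp|=|\Tnlminusonep|+|\Tnpplustwop|$.
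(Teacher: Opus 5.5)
\textbf{Verdict: genuine gap.} Your map on $\Tnlminusonep$ agrees with the paper's: the identity if $\son_T(1)\cap F_\ell=\emptyset$, and otherwise $T[\F_1(T)\to a_\alpha]$ followed by $(\cdot)^*$. Your check that $\Pmax(T)$ coincides with the $F_{\ell-1}$-path is a welcome detail. The gap is the second piece, which carries the content of the proposition and is only sketched.

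\textbf{The moves on $\TildeTnpplustwop$ target the wrong obstruction.} Your moves (i)--(iii) are borrowed from Case~III of Proposition~\ref{prop:Tnl_bijection}, where $\Par(1)\in F_\ell$ had to be repaired. Here step (i) is vacuous: the swap $n-p-1\leftrightarrow n-\ell+1$ only touches labels in $F_\ell$, so $\Par_{\widetilde T}(1)=\Par_T(1)\notin F_\ell$ already. The real obstructions are sons of $1$ in $F_\ell$, and a largest root-son $b>n-\ell+1$. Your preceding sentence wrongly denies the second one: $n-\ell+1$ is itself a son of the root, and for $n=5,\ell=3,p=0$ the tree $5\to\{3,4\}$, $4\to2\to1$ lies in $\TildeTnpplustwop$ with $b=4$. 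Nothing in your plan handles $b>n-\ell+1$.

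The paper proceeds as follows:
\begin{itemize}
\item It splits on whether $n-\ell+1$ is the largest son of the root (Cases (i), (ii)).
\item Otherwise it splits on whether $1$ lies below the end $b_\beta$ of $\Pmax(\widetilde T)$, below $n-\ell+1$, or neither (Subcases (iiia)--(iiic)).
\item The moves are subtree transplants, not label swaps. In (iiia)/(iiib), $\overline{\F}_{n-\ell+1}$, carrying its $F_{p+2}$-chain with $\F_1$ moved under its leaf end, is hung below $b_\beta$.
\item In (iiic), $\F_1$ is moved to the root, $\overline{\F}_{n-\ell+1}$ is hung under the end of the maximum path of $r\vee\F_1(\widetilde T)$, and the rest of the root's forest goes under the leaf end of the resulting chain.
\item Each case finishes with $(\cdot)^*$.
\end{itemize}

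\textbf{Your inverse rests on an incorrect split of $\Cnlp$.} The image of $\Tnlminusonep$ is not ``trees whose maximum path avoids $n-\ell+1$''. Your own $(\cdot)^*$ branch makes $n-\ell+1$ a son of $a_\alpha$, hence a vertex of the new maximum path. The correct description is: either $\Pmax(T)\subseteq F_{\ell-1}$, or $n-\ell+1=a_t$ with $t>1$ and $1$ not a descendant of $a_{t-1}$.

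Take $n=5,\ell=3,p=0$. There $|\Tnlminusonep|=4$, but only three trees of $\mathcal{C}^{0}_{5,3}$ have a maximum path avoiding $3$, so your proposed image is too small. Moreover, two trees share the maximum path $5\to4\to3$, with $3$ terminal:
\begin{itemize}
\item $5\to\{4,2\}$, $4\to3$, $2\to1$, the image of $5\to\{4,2\}$, $2\to1\to3$ from $\Tnlminusonep$;
\item $5\to4$, $4\to\{3,2\}$, $2\to1$, the image of $5\to\{3,4\}$, $4\to2\to1$ from $\TildeTnpplustwop$.
\end{itemize}
They are distinguished only by whether $1$ lies below $\Par(3)=4$. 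Your criterion (``first or terminal vertex after leaving $F_{p+2}$'') cannot see this.

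\textbf{Do not rely on disjointness.} The claim $\Tnlminusonep\cap\TildeTnpplustwop=\emptyset$ is false. For $n=5,\ell=3,p=0$, the tree $5\to\{4,3,2\}$, $2\to1$ lies in $\Tnlminusonep$ and is fixed by the swap $4\leftrightarrow3$. So the domain must be read as a formal disjoint union, with $\Phi_p$ defined on tagged elements. The count then follows from bijectivity alone.
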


\begin{proof}
We define the map $\Phi_p$ separately on the two disjoint subsets $\Tnlminusonep$ and  $\TildeTnpplustwop$, and characterize the image of each part together with the corresponding inverse construction.

\medskip
\noindent\textbf{The map on $\Tnlminusonep$.}
Let $\widetilde{T}\in \Tnlminusonep$.
Suppose that the maximum path $\Pmaximum^{\ell-1}(\widetilde{T})$ of $\widetilde{T}$ in $F_{\ell-1}$ is of the form
\[
\Pmaximum^{\ell-1}(\widetilde{T})\colon \quad 
r=n-p\to a_1'\to \cdots \to a_{\alpha-1}'\to a_{\alpha}', \qquad (\alpha\geq 1),
\]
where $a_i'\in F_{\ell-1}$.
Since $\widetilde{T}\in \Tnlminusonep$, we have $\Par_{\widetilde{T}}(1)\notin F_{\ell}$.

If $\son_{\widetilde{T}}(1)\cap F_{\ell}=\emptyset$, then $\widetilde{T}\in \Cnlp$, and we define $\Phi_p(\widetilde{T})=\widetilde{T}$.
Now suppose that $\son_{\widetilde{T}}(1)\cap F_{\ell}\neq \emptyset$.
That is, $\son_{\widetilde{T}}(1)\cap F_{\ell}=\{n-\ell+1\}$.
Consider the tree $\widetilde{T}'= \widetilde{T}\bigl[\F_{1}(\widetilde{T})\to a_{\alpha}'\bigr],$
obtained by attaching the descendants of $1$ to the leaf $a_{\alpha}'$  in $\widetilde{T}$.
Clearly, $\widetilde{T}'$ satisfies the hypotheses of Proposition~\ref{prop:T*_in_Cnlp}. 
Moreover, the maximum path $\Pmax(\widetilde{T}')$ in $F_{\ell}$ is of the form
\[
\Pmax(\widetilde{T}')\colon \quad r=n-p\to a_1'\to \cdots\to a_{\alpha}'\to a_{\alpha+1}'\to \cdots\to a_{\alpha+t}', \qquad (t\geq 1),
\]
where $a_{\alpha+1}'=n-\ell+1$.
Applying Proposition~\ref{prop:T*_in_Cnlp}, we obtain a tree $(\widetilde{T}')^*\in \Cnlp$.
We define $\Phi_p(\widetilde{T})=(\widetilde{T}')^*$.

In fact, we shall show that $\Phi_p(\Tnlminusonep)$ consists precisely of those trees $T\in \Cnlp$ such that the maximum path $\Pmax(T)$ is of the form
\[
\Pmax(T)\colon \quad
r=n-p\to a_1\to a_2\to \cdots \to a_{\alpha-1}\to a_{\alpha},
\qquad (\alpha\geq 1),
\]
where $a_i\in F_{\ell}$ for all $1\leq i\leq \alpha$, $a_{\alpha}$ is a leaf and $n-\ell+1<a_1<r=n-p$.
Moreover, either $a_i\in F_{\ell-1}$ for all $1\leq i\leq \alpha$, or there exists an index $t$ with $1<t\leq \alpha$ such that $a_t=n-\ell+1$ and the vertex $1$ is not a descendant of $\Par_T(a_t)$ in $T$.

In the former case, we clearly have $T\in \Tnlminusonep$ and $\Phi_p(T)=T.$
In the latter case, consider the tree $T'= T\bigl[\F_{1}(T)\to a_{\alpha}\bigr],$
obtained by attaching all descendants of $1$ to the leaf $a_{\alpha}$ in $T$. 
Let $\overline{\mathcal{F}}_{a_t}(T')=a_t\vee \mathcal{F}_{a_t}(T')$ denote the rooted subtree consisting of the vertex $a_t$ together with all its descendants in $T'$.
We then construct a tree $T^2$ by removing $\overline{\mathcal{F}}_{a_t}(T')$ from $T'$ and attaching it to the leaf $1$.
By construction, $T^2\in \Tnlminusonep$, and $\son_{T^2}(1)\cap F_{\ell}=\{n-\ell+1\}$.
Reversing the above constructions, we recover $T$ from $T^2$, and therefore $\Phi_p(T^2)=T$.
\medskip

\noindent\textbf{The map on $\TildeTnpplustwop$.}
Let $\widetilde{T}\in\TildeTnpplustwop$. 
Recall that every tree in $\TildeTnpplustwop$ is obtained from a tree in $\Tnpplustwop$ by interchanging the vertices $n-p-1$ and $n-\ell+1$.
Let
\[
\Pmaximum^{p+2}(T)\colon \quad
r=n-p\to \widetilde{a}_1\to \widetilde{a}_2\to \cdots \to \widetilde{a}_t,
\qquad (t\geq 1),
\]
be the maximum path of the corresponding tree $T\in \Tnpplustwop$ in $F_{p+2}$, where $\widetilde{a}_i\in F_{p+2}$, $\widetilde{a}_1=n-p-1$, and $\widetilde{a}_t$ is a leaf.
By abuse of notation, we write
\begin{equation}\label{eq:path_p+2}
\Pmaximum^{p+2}(\widetilde{T})\colon\quad
r=n-p\to a_1''\to a_2''\to \cdots \to a_t'',   
\end{equation}
where $a_1''=n-\ell+1$, $a_j''=\widetilde{a}_j\in F_{p+2}\setminus\{n-p-1\}$ $(2\leq j\leq t)$, and $a_t''=\widetilde{a}_t$ is a leaf of $\widetilde{T}$.
Note that $\Pmaximum^{p+2}(\widetilde{T})$ as defined in~\eqref{eq:path_p+2} need not be the maximum path of $\widetilde{T}$ in $F_{p+2}$. 
However, throughout this subsection, the notation $\Pmaximum^{p+2}(\widetilde{T})$ will always refer to the path obtained from $\Pmaximum^{p+2}(T)$ by interchanging the vertices $n-p-1$ and $n-\ell+1$.
We consider the following cases.

\medskip
\noindent\textbf{Case (i).}
Suppose that the vertex $1$ is not adjacent to any vertex of $F_{\ell}$ in $\widetilde{T}$, and that $n-\ell+1$ is the largest son of the root $r=n-p$.
Then $\widetilde{T}\in \Cnlp$, and we define $\Phi_p(\widetilde{T})=\widetilde{T}$.

\medskip
\noindent\textbf{Case (ii).}
Suppose that $n-\ell+1$ is the largest son of the root $r=n-p$ in $\widetilde{T}$, but $\son_{\widetilde{T}}(1)\cap F_{\ell}\neq\emptyset$. 
Recall that, by abuse of notation, $\Pmaximum^{p+2}(\widetilde{T})$ denotes the path obtained from the maximum path of the corresponding tree in $\Tnpplustwop$. 
In the present case, since $n-\ell+1$ is the largest son of the root $r=n-p$ in $\widetilde{T}$, this path is in fact the maximum path of $\widetilde{T}$ in $F_{p+2}$.
Thus,
\[
\Pmaximum^{p+2}(\widetilde{T})\colon\quad
r=n-p\to a_1''\to a_2''\to\cdots\to a_t'',
\qquad (t\geq 1),
\]
where $a_1''=n-\ell+1$, $a_i''\in F_{p+2}\setminus\{n-p-1\}$ for $2\leq i\leq t$, and $a_t''$ is a leaf of $\widetilde{T}$. 
Consider the tree  $\widetilde{T}^2=\widetilde{T}\bigl[\F_1(\widetilde{T})\to a_t''\bigr]$
obtained by attaching all descendants of $1$ to the leaf $a_t''$ in $\widetilde{T}$. 
Then the maximum path $\Pmax(\widetilde{T}^2)$ in $F_{\ell}$ has the form
\[
\Pmax(\widetilde{T}^2)\colon \quad 
r=n-p\to a_1''\to a_2''\to\cdots\to a_t''\to a_{t+1}''
\to\cdots\to a_{\alpha}'',
\qquad (\alpha>t).
\]
Clearly, the tree $\widetilde{T}^2$ satisfies the hypotheses of 
Proposition~\ref{prop:T*_in_Cnlp}, and applying it yields a tree $(\widetilde{T}^2)^*=\widetilde{T}^2\bigl[
\F_{a_{\alpha}''}(\widetilde{T}^2)\to 1\bigr]\in\Cnlp.$
Here, we define $\Phi_p(\widetilde{T})=(\widetilde{T}^2)^*$.

\medskip

We now identify the trees in $\Cnlp$ arising from Case~(ii). 
Let $T\in\Cnlp$ whose maximum path
\[
\Pmax(T)\colon \quad   r=n-p\to a_1\to a_2\to\cdots\to a_{\alpha}, \qquad (a_i\in F_{\ell}),
\]
contains an index $t$ with $1<t<\alpha$ such that $a_1=n-\ell+1$, 
$a_i\in F_{p+2}\setminus\{n-p-1\}$ for $1<i\leq t$, 
$a_{t+1}\notin F_{p+2}\setminus\{n-p-1\}$, and 
vertex $1$ is not a descendant of $a_t$. 
Consider the tree 
\[
T^3=T\bigl[\F_1(T)\to a_{\alpha}\bigr],
\]
obtained by attaching all descendants of $1$ to the leaf $a_{\alpha}$ in $T$. 
Next, consider the tree
\[
T^4=T^3\bigl[\F_{a_{t+1}}(T^3)\to 1\bigr].
\]
Then $T^{4}\in \Tnpplustwop$, and reversing the above construction gives $\Phi_p(T^{4})=T$.

\medskip
\noindent\textbf{Case (iii).}
Suppose that the largest son of the root $r=n-p$ in $\widetilde{T}$ is a vertex $b\neq n-\ell+1$.
Since $\widetilde{T}\in\TildeTnpplustwop$ is obtained from a tree in $\Tnpplustwop$ by interchanging the vertices $n-p-1$ and $n-\ell+1$, it follows that $n-\ell+1<b<n-p$.
Let $\Pmax(\widetilde{T})$ be the maximum path of $\widetilde{T}$ in $F_{\ell}$. 
Then
\[
\Pmax(\widetilde{T})\colon\quad
r=n-p\to b=b_1\to b_2\to \cdots \to b_{\beta-1}\to b_{\beta},
\qquad (\beta\geq 1),
\]
where $b_i\in F_{\ell}$ for all $1\leq i\leq \beta$.
We divide the analysis into the following three subcases.

\medskip
\noindent\textbf{Subcase (iiia).}
Suppose that the vertex $1$ is a descendant of $b_{\beta}$ in $\widetilde{T}$.
Recall that
\[
\Pmaximum^{p+2}(\widetilde{T})\colon \quad 
r=n-p\to a_1''\to a_2''\to \cdots \to a_t'',
\]
where $a_1''=n-\ell+1$, $a_i''\in F_{p+2}\setminus\{n-p-1\}$ and $a_t''$ is a leaf.
First, construct the tree $\widetilde{T}^{2}=\widetilde{T}
\bigl[\mathcal{F}_1(\widetilde{T})\to a_t''\bigr]$, by attaching the descendants of $1$ to the leaf $a_t''$ in $\widetilde{T}$.
Let $\overline{\mathcal{F}}_{n-\ell+1}(\widetilde{T}^{2})=(n-\ell+1)\vee\mathcal{F}_{n-\ell+1}(\widetilde{T}^{2})$ denote the rooted subtree consisting of the vertex $n-\ell+1$ together with all its descendants in $\widetilde{T}^{2}$. 
We obtain a tree $\widetilde{T}^{3}$ by removing $\overline{\mathcal{F}}_{n-\ell+1}(\widetilde{T}^{2})$ from $\widetilde{T}^{2}$ and attaching it to the vertex $b_{\beta}$.
Suppose that the maximum path $\Pmax(\widetilde{T}^{3})$ of $\widetilde{T}^{3}$ in $F_{\ell}$ is of the form
\[
\Pmax(\widetilde{T}^{3})\colon  \quad r=n-p\to b_1\to b_2\to \cdots\to b_{\beta}\to b_{\beta+1}\to \cdots\to b_{\beta+t},\qquad (t\geq 1),
\]
where $b_{\beta+1}=n-\ell+1$.
Then $\widetilde{T}^{3}$ satisfies the hypotheses of Proposition~\ref{prop:T*_in_Cnlp}. 
By applying Proposition~\ref{prop:T*_in_Cnlp}, we obtain the tree $(\widetilde{T}^{3})^*=\widetilde{T}^{3}\bigl[\mathcal{F}_{(b_{\beta+t})}(\widetilde{T}^{3})\to 1\bigr]\in \Cnlp$.
We define $\Phi_p(\widetilde{T})=(\widetilde{T}^{3})^*$.

\medskip
\noindent\textbf{Subcase (iiib).}
Suppose that the vertex $1$ is a descendant of $n-\ell+1$ in $\widetilde{T}$. Proceeding exactly as in Subcase~(iiia), we construct the trees $\widetilde{T}^{2}$, $\widetilde{T}^{3}$ and obtain a tree $(\widetilde{T}^{3})^*\in \Cnlp$.
We then define $\Phi_p(\widetilde{T})=(\widetilde{T}^{3})^*$.

\medskip
The images arising from Subcases~(iiia) and~(iiib) can be distinguished by the position of the vertex $1$. 
In Subcase~(iiia), the vertex $1$ is a descendant of $b_{\beta}$ but not of $n-\ell+1$ in $\Phi_p(\widetilde{T})$. 
On the other hand, in Subcase~(iiib), the vertex $1$ is a descendant of $n-\ell+1$ in $\Phi_p(\widetilde{T})$.

\medskip
\noindent\textbf{Subcase (iiic).}
Suppose that the vertex $1$ is neither a descendant of $n-\ell+1$ nor a descendant of $b_{\beta}$ in $\widetilde{T}$.
Consider the rooted forest $\mathcal{F}_1(\widetilde{T})$ consisting of all descendants of $1$ in $\widetilde{T}$, and let
$r\vee \mathcal{F}_1(\widetilde{T})$
be the rooted tree obtained by attaching $\mathcal{F}_1(\widetilde{T})$ to the root $r=n-p$.
Suppose that the maximum path $\Pmax(r\vee \mathcal{F}_1(\widetilde{T}))$ of $r\vee \mathcal{F}_1(\widetilde{T})$ in $F_{\ell}$ is
\[
r=n-p=c_0\to c_1\to c_2\to \cdots \to c_{\gamma},
\qquad (\gamma\geq 0),
\]
where $c_i\in F_{\ell}$ $(0\leq i\leq \gamma)$.
Note that if $1$ is a leaf of $\widetilde{T}$, or more generally if $\son_{\widetilde{T}}(1)\cap F_{\ell}=\emptyset$, then $\Pmax(r\vee \mathcal{F}_1(\widetilde{T}))=\{r\}$, that is, $\gamma=0$.
We construct a tree $\widetilde{T}^{4}$ from $\widetilde{T}$ by removing the forest $\mathcal{F}_1(\widetilde{T})$ from vertex $1$ and attaching it to the vertex $r$.
Let $\mathcal{F}'_{(r)}(\widetilde{T}^{4})$ denote the rooted forest obtained from the descendants of the root $r$ in $\widetilde{T}^{4}$ by removing the rooted subtree $\overline{\mathcal{F}}_{n-\ell+1}(\widetilde{T}^{4})$ and the rooted forest $\mathcal{F}_1(\widetilde{T})$,
which is now attached to $r$. 
The structure of $\widetilde{T}^{4}$ is illustrated in Figure~\ref{fig:subcaseIIIC_T4}.

\begin{figure}[ht]
\centering


\begin{subfigure}[t]{0.45\textwidth}
\centering
\begin{tikzpicture}[scale=0.8,
    every node/.style={font=\small}]

\node[circle,fill=black,inner sep=1.2pt] (r) at (0,3) {};
\node[above] at (r) {$r=n-p$};

\node[circle,fill=black,inner sep=1.2pt] (v) at (-3,1) {};
\node[left] at (v) {$n-\ell+1$};

\draw (r)--(v);

\node[draw,rectangle,minimum width=1cm,minimum height=0.5cm]
(Fr) at (0,0)
{$\F'_{(r)}(\widetilde T^4)$};

\draw (r)--(Fr);

\node[draw,rectangle,minimum width=1cm,minimum height=0.5cm]
(Fv) at (-3,-1)
{$\F_{n-\ell+1}(\widetilde T^4)$};

\draw (v)--(Fv);

\node[draw,rectangle,minimum width=1cm,minimum height=0.5cm]
(F1) at (3.5,1)
{$\F_1(\widetilde T)$};

\draw (r)--(F1);

\end{tikzpicture}
\caption{The tree $\widetilde T^4$ obtained from $\widetilde T$.}
\end{subfigure}

\vspace{0.5cm}

\begin{subfigure}[t]{0.45\textwidth}
\centering

\begin{tikzpicture}[scale=0.7,
    every node/.style={font=\small}]

\node[circle,fill=black,inner sep=1.2pt] (r) at (0,1) {};
\node[above] at (r) {$c_0=r=n-p$};

\node[circle,fill=black,inner sep=1.2pt] (c1) at (0,-2) {};
\node[right] at (c1) {$c_1$};

\node[circle,fill=black,inner sep=1.2pt] (c2) at (0,-4) {};
\node[right] at (c2) {$c_2$};

\node[circle,fill=black,inner sep=1.2pt] (cg) at (0,-7) {};
\node[right] at (cg) {$c_\gamma$};

\draw (r)--(c1)--(c2);
\draw[dashed] (c2)--(cg);

\node[draw,rectangle,
minimum width=1cm,
minimum height=0.5cm]
(Froot) at (-3,-1)
{$\F_{(1)}(\widetilde T)$};

\draw (r)--(Froot);

\node[draw,rectangle,
minimum width=1cm,
minimum height=0.5cm]
(Fc1) at (-3,-3)
{$\F_{(c_1)}(\widetilde T)$};

\draw (c1)--(Fc1);

\node[draw,rectangle,
minimum width=1cm,
minimum height=0.5cm]
(Fc2) at (-3,-5)
{$\F_{(c_2)}(\widetilde T)$};

\draw (c2)--(Fc2);

\node[draw,rectangle,
minimum width=1cm,
minimum height=0.5cm]
(Fcg) at (-3,-8)
{$\F_{(c_\gamma)}(\widetilde T)$};

\draw (cg)--(Fcg);


\node[
draw,
dashed,
rounded corners,
inner sep=0.6cm,
fit=(c1) (cg) (Froot) (Fc1) (Fc2) (Fcg)
] (Fonebox) {};

\node[right=0.5cm] at (Fonebox.east)
{$\F_1(\widetilde T)$};

\end{tikzpicture}
\caption{The rooted tree $r\vee \F_1(\widetilde T)$.}
\end{subfigure}
\hfill
\begin{subfigure}[t]{0.45\textwidth}
\centering

\begin{tikzpicture}[scale=0.7,
    every node/.style={font=\small}]

\node[circle,fill=black,inner sep=1.2pt] (r) at (0,1) {};
\node[above] at (r) {$r=n-p$};

\node[circle,fill=black,inner sep=1.2pt] (b1) at (0,-2) {};
\node[right] at (b1) {$b_1$};

\node[circle,fill=black,inner sep=1.2pt] (b2) at (0,-4) {};
\node[right] at (b2) {$b_2$};

\node[circle,fill=black,inner sep=1.2pt] (bbeta) at (0,-7) {};
\node[right] at (bbeta) {$b_\beta$};

\draw (r)--(b1)--(b2);
\draw[dashed] (b2)--(bbeta);

\node[draw,rectangle,
minimum width=1cm,
minimum height=0.5cm]
(Froot) at (-2.3,-1)
{$\F''_{(r)}(\widetilde T)$};

\draw (r)--(Froot);

\node[draw,rectangle,
minimum width=1cm,
minimum height=0.5cm]
(Fb1) at (-2.3,-4)
{$\F_{(b_1)}(\widetilde T)$};

\draw (b1)--(Fb1);

\node[draw,rectangle,
minimum width=1cm,
minimum height=0.5cm]
(Fb2) at (-2.3,-6.3)
{$\F_{(b_2)}(\widetilde T)$};

\draw (b2)--(Fb2);

\node[draw,rectangle,
minimum width=1cm,
minimum height=0.5cm]
(Fbbeta) at (-2.3,-8)
{$\F_{(b_\beta)}(\widetilde T)$};

\draw (bbeta)--(Fbbeta);







\node[
draw,
dashed,
rounded corners,
inner sep=0.6cm,
fit=(b1) (bbeta) (Froot) (Fb1) (Fb2) (Fbbeta) 
] (Fprimebox) {};

\node[right=0.5cm] at (Fprimebox.east)
{$\F'_{(r)}(\widetilde T^4)$};

\end{tikzpicture}
\caption{The rooted tree $r\vee \F'_{(r)}(\widetilde T^4)$ where $1$ is a leaf which is not a descendant of $b_{\beta}$.}
\end{subfigure}

\caption{Structure of the tree $\widetilde T^4$.}
\label{fig:subcaseIIIC_T4}

\end{figure}

\noindent
Next, we construct the rooted tree $\widetilde{T}^{5}$ from $\widetilde{T}^{4}$ by removing the rooted subtree $\overline{\mathcal{F}}_{n-\ell+1}(\widetilde{T}^{4}) 
= (n-\ell+1)\vee \mathcal{F}_{n-\ell+1}(\widetilde{T}^{4})$ and attaching it to the vertex $c_{\gamma}$ in $\widetilde{T}^{4}$. 
Suppose that the maximum path $\Pmax(\widetilde{T}^{5})$ of $\widetilde{T}^{5}$ in $F_{\ell}$ is of the form
\[
\Pmax(\widetilde{T}^{5})\colon \quad r=n-p \to c_1 \to \cdots \to c_{\gamma} \to c_{\gamma+1} \to \cdots \to c_{\gamma+t}, 
\]
where $c_{\gamma+1}=n-\ell+1$, $c_{\gamma+i}\in F_{p+2}\setminus\{n-p-1\}$ $(1<i\leq t)$ and $c_{\gamma+t}$ is a leaf.
Observe that $\mathcal{F}'_{(r)}(\widetilde{T}^{5})=\mathcal{F}'_{(r)}(\widetilde{T}^{4})$.
We then construct $\widetilde{T}^{6}$ from $\widetilde{T}^{5}$ by removing $\mathcal{F}'_{(r)}(\widetilde{T}^{5})$ from $\widetilde{T}^{5}$ and attaching it to the leaf $c_{\gamma+t}$.
The tree $\widetilde{T}^{6}$ satisfies the hypotheses of Proposition~\ref{prop:T*_in_Cnlp} with the maximum path $\Pmax(\widetilde{T}^{6})$ of $\widetilde{T}^{6}$ in $F_{\ell}$ of the form
\[
\Pmax(\widetilde{T}^{6})\colon \quad r=n-p \to c_1 \to \cdots \to c_{\gamma} \to c_{\gamma+1} \to \cdots \to c_{\gamma+t} \to c_{\gamma+t+1} \to \cdots \to c_{\gamma+t+\beta},
\]
where $c_{\gamma+1}=n-\ell+1$, 
$c_{\gamma+i}\in F_{p+2}\setminus\{n-p-1\}$ $(1<i\leq t)$, 
$c_{\gamma+t+1}=b\in F_{\ell}\setminus F_{p+2}$,  and 
$c_{\gamma+t+j}\in F_{\ell}$ $(1\leq j\leq \beta)$.
Therefore, by Proposition~\ref{prop:T*_in_Cnlp}, we obtain $(\widetilde{T}^{6})^* = \widetilde{T}^{6}\bigl[ \mathcal{F}_{c_{\gamma+t+\beta}}(\widetilde{T}^{6}) \to 1\bigr] \in \Cnlp$.
We define $\Phi_p(\widetilde{T}) =(\widetilde{T}^{6})^*$.

\medskip

We now characterize the trees $T\in \Cnlp$ that arise from Case~(iii). 
Suppose that the maximum path $\Pmax(T)$ of $T$ in $F_{\ell}$ is of the form
\[
\Pmax(T)\colon\quad
r=n-p\to a_1\to a_2\to \cdots \to a_{\alpha-1}\to a_{\alpha},\qquad (\alpha>1),
\]
with $a_s=n-\ell+1$ for some $1<s\leq \alpha$.
%
If $s=\alpha$ or $a_{s+1}\notin F_{p+2}$, we set $t=0$. 
Otherwise, let $t$ be the largest positive integer such that $s+t\leq \alpha$ and $a_{s+j}\in F_{p+2}$ for $1\leq j\leq t$.

\medskip
If the vertex $1$ is a descendant of $a_{s-1}$ but not of $a_s$ in $T$, then $T$ arises from Subcase~(iiia). 
Indeed, consider the tree $T'=T\bigl[\mathcal{F}_1(T)\to a_{\alpha}\bigr].$
Next, consider the tree $T^{5} = T'\bigl[\mathcal{F}_{a_{s+t}}(T')\to 1\bigr]$,
obtained by attaching all the descendants of $a_{s+t}$ to the leaf $1$ in $T'$.
Let $\overline{\mathcal{F}}_{a_s}(T^{5}) = a_s\vee \mathcal{F}_{a_s}(T^{5})$ be the rooted subtree consisting of the vertex $a_s=n-\ell+1$ together with all of its descendants in $T^{5}$. 
Finally, we construct the tree $T^{6}$ from $T^{5}$ by removing the rooted subtree $\overline{\mathcal{F}}_{a_s}(T^{5})$ and attaching it to the root $r$ in $T^{5}$. 
Then $T^{6}\in \TildeTnpplustwop$, and reversing the above constructions, we recover $T$ from $T^{6}$. 
Hence $\Phi_p(T^{6})=T$, showing that $T$ arises from Subcase~(iiia).

\medskip
Suppose there exist integers $s\geq 1$ and $t\geq 0$ with $s+t<\alpha$ such that $a_s=n-\ell+1$, $a_{s+j}\in F_{p+2}$ for $1\leq j\leq t$, and $b:=a_{s+t+1}\in F_{\ell}\setminus F_{p+2}$.
Assume further that the vertex $1$ is a descendant of $a_s=n-\ell+1$ but is not a descendant of $b$ in $T$. 
Then $T$ arises from Subcase~(iiib). 
Indeed, the tree $T^{6}\in\TildeTnpplustwop$ constructed above satisfies the hypotheses of Subcase~(iiib), and reversing the construction yields $\Phi_p(T^{6})=T$.

\medskip
Finally, suppose there exists an index $s$ with $1<s\leq \alpha$ such that $a_s=n-\ell+1$, and let $t\geq 0$ be as defined above. Assume that $s+t<\alpha$ and set $b:=a_{s+t+1}\in F_{\ell}\setminus F_{p+2}$.
Suppose further that the vertex $1$ is a descendant of $a_{s+t}$ in $T$. 
Then $T$ arises from Subcase~(iiic).
Indeed, consider the rooted tree $T'=T\bigl[\mathcal{F}_1(T)\to a_{\alpha}\bigr]$. 
Let $\mathcal{F}_{(r)}(T')$ denote the rooted forest consisting of all descendants of the root $r$, excluding the rooted subtree $\overline{\mathcal{F}}_{a_1}(T') = a_1\vee\mathcal{F}_{a_1}(T')$.
Next, obtain the tree $T^7$ from $T'$ by removing $\mathcal{F}_{(r)}(T')$ from $T'$ and attaching it to $1$.
We then construct the rooted tree $T^{8}$ from $T^{7}$ by removing the rooted forest $\mathcal{F}_{a_{s+t}}(T^{7})$ and attaching it to the root $r$ in $T^{7}$. 
Subsequently, we construct the rooted tree $T^{9}$ from $T^{8}$ by removing the rooted subtree $\overline{\mathcal{F}}_{a_s}(T^{8})=a_s\vee\mathcal{F}_{a_s}(T^{8})$, consisting of $a_s=n-\ell+1$ together with all its descendants in $T^{8}$, and attaching it to the root $r$ in $T^{8}$. 
Finally, we construct the rooted tree $T^{10}$ from $T^{9}$ by removing the rooted subtree $\overline{\mathcal{F}}_{a_1}(T^{9})$ and attaching it to the vertex $1$ in $T^{9}$. 
Then $T^{10}\in\TildeTnpplustwop$, and $T^{10}$ satisfies the hypotheses of Subcase~(iiic). 
Reversing the above constructions, we recover $T$ from $T^{10}$, and hence $\Phi_p(T^{10})=T$.

\medskip

Since the above cases are mutually exclusive and exhaustive, and every tree $T\in\Cnlp$ arises from a unique element of $\Tnlminusonep$ or $\TildeTnpplustwop$, the map 
$\Phi_p\colon\Tnlminusonep\coprod\TildeTnpplustwop\longrightarrow\Cnlp$
is a bijection. 
This completes the proof.
\end{proof}

To determine the cardinality of $\Cnlp$, we compare the sizes of the subsets $\Tnlminusonep\subseteq\Cnlminusonep$ and $\Tnpplustwop\subseteq\Cnpplustwop$.
Our approach is based on Pr\"{u}fer codes of rooted trees, which allow us to relate $|\Tnlminusonep|$ to $|\Cnlminusonep|$, and  $|\Tnpplustwop|$ to $|\Cnpplustwop|$.

We recall that for a rooted tree $T_0$ with vertex set $[n]$,  where $n\geq 2$, the  \emph{Pr\"{u}fer code} $\pr(T_0)=(\lambda_1,\lambda_2,\ldots,\lambda_{n-1})$ is obtained by recursively removing the smallest non-root leaf and recording its parent in the remaining tree. 
The final entry $\lambda_{n-1}$ is the root of $T_0$. 
Moreover, every Pr\"{u}fer code uniquely determines a rooted tree with vertex set $[n]$.

\begin{proposition}\label{prop:T_vs_C}
For $0\leq p<\ell-2$, we have 
\[
|\Tnlminusonep|=|\Cnlminusonep|\left(\frac{n-\ell-1}{n-\ell}\right)
\qquad\text{and}\qquad
|\Tnpplustwop|=|\Cnpplustwop|\left(\frac{n-\ell-1}{n-p-3}\right).
\]
\end{proposition}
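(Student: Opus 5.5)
The plan is to prove both identities by one symmetry argument. In each set, the parent of the vertex $1$ will turn out to be equidistributed over the vertices it is allowed to attach to. The subset $\Tnlminusonep$ (resp.\ $\Tnpplustwop$) is exactly the part where $\Par_T(1)$ avoids a prescribed set of those positions. Prüfer codes could be used to record $\Par_T(1)$, but I expect plain label transpositions to suffice.

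\emph{First identity.} Let $T\in\Cnlminusonep$. Its root is $n-p\in F_{\ell-1}$, so $1$ is not the root and $\Par_T(1)$ exists. Since $1$ is not adjacent to $F_{\ell-1}$, we have $\Par_T(1)\in X:=\{2,3,\ldots,n-\ell+1\}$, which has $n-\ell$ elements. For $j\in X$ let $\mathcal{C}_j=\{T\in\Cnlminusonep:\Par_T(1)=j\}$. By definition, $\Tnlminusonep=\Cnlminusonep\setminus\mathcal{C}_{n-\ell+1}$.

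I will show that for $j\in X\setminus\{n-\ell+1\}$, the transposition of labels $\sigma=(j\;\,n-\ell+1)$ maps $\mathcal{C}_j$ bijectively onto $\mathcal{C}_{n-\ell+1}$. This needs three checks, all using Proposition~\ref{prop:Cnlp_characterization} with $F_{\ell-1}$ in place of $F_\ell$.
\begin{enumerate}
\item Uprootedness is preserved. Both swapped labels are smaller than the root $n-p$, because $p<\ell-2$ gives $n-\ell+1<n-p$. So the sons of the root remain smaller than it.
\item Non-adjacency of $1$ to $F_{\ell-1}$ is preserved, since $\sigma$ fixes every label in $F_{\ell-1}\cup\{1\}$.
\item The maximum path in $F_{\ell-1}$ and its leaf endpoint are unchanged. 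Every label in $F_{\ell-1}$ exceeds both swapped labels, so for each vertex the statement ``its largest son lies in $F_{\ell-1}$'' and the identity of that son are unaffected. Hence $\Pmaximum^{\ell-1}$ is literally the same path, and its endpoint is still a leaf.
\end{enumerate}
Since $\sigma$ is an involution, this gives $|\mathcal{C}_j|=|\mathcal{C}_{n-\ell+1}|$ for all $j\in X$. Therefore $|\Tnlminusonep|=\frac{n-\ell-1}{n-\ell}\,|\Cnlminusonep|$.

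\emph{Second identity.} The argument is the same with $F_{p+2}=\{n-p-1,\ldots,n\}$. For $T\in\Cnpplustwop$ we have $\Par_T(1)\in Y:=\{2,\ldots,n-p-2\}$, which has $n-p-3$ elements. Membership in $\Tnpplustwop$ forbids $\Par_T(1)\in F_\ell\cap Y=\{n-\ell+1,\ldots,n-p-2\}$, a set of $\ell-p-2$ values. That leaves $(n-p-3)-(\ell-p-2)=n-\ell-1$ admissible values. Transpositions of labels within $Y$ preserve membership in $\Cnpplustwop$ by the three checks above. Here all labels of $Y$ are smaller than every label of $F_{p+2}$, including the root $n-p$. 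So the fibres of $T\mapsto\Par_T(1)$ over $Y$ all have equal size, which gives the factor $\frac{n-\ell-1}{n-p-3}$.

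The main point needing care is check (3): that $\Pmax$ (in $F_{\ell-1}$, resp.\ $F_{p+2}$) and the leaf condition are invariant under these swaps. This reduces to the observation that the swapped labels lie entirely below the relevant $F$-set, so they can never become the ``largest son'' at any step of the path. I would verify this carefully against conditions (1)--(4) in the definition of the maximum path. I would also check that the swap cannot create an adjacency between $1$ and the $F$-set, which holds because $1$ and the $F$-labels are fixed.
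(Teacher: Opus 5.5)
Your proposal is correct and follows essentially the paper's route. The paper also shows that the fibres of $T\mapsto \Par_T(1)$ are equinumerous: it phrases $\Par_T(1)$ as the first entry of the Pr\"{u}fer code of $T^*$, uses a label transposition (swapping $2$ and $a$) over $\{2,\ldots,n-\ell+1\}$, resp.\ $\{2,\ldots,n-p-2\}$, and then counts the admissible parent values. Your explicit check via Proposition~\ref{prop:Cnlp_characterization}, that such swaps preserve membership because they fix the root, the vertex $1$ and every label of $F_{\ell-1}$ (resp.\ $F_{p+2}$), supplies the justification the paper only asserts.
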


\begin{proof}
Let $T\in\Cnlminusonep$ or $T\in\Cnpplustwop$, and let $\Pmaximum^{\ell-1}(T)$ or $\Pmaximum^{p+2}(T)$, respectively, denote the maximum path of $T$ of the form
$r=n-p\to a_1\to a_2\to\cdots\to a_{\alpha}$.
Consider the tree $T^*=T\bigl[\F_1(T)\to a_{\alpha}\bigr]$, obtained by attaching the descendants of $1$ to the leaf $a_{\alpha}$ in $T$. 
By construction, vertex $1$ is a leaf of $T^*$. 
Let
\[
\hatCnl=\{T^* : T\in\Cnlminusonep\}
\qquad\text{and}\qquad
\hatCnp=\{T^* : T\in\Cnpplustwop\}.
\]
By Proposition~\ref{prop:T*_in_Cnlp}, the map $T\mapsto T^*$ is a bijection from $\Cnlminusonep$ to $\hatCnl$ and from $\Cnpplustwop$ to $\hatCnp$.
Thus,
\[
|\Cnlminusonep|=|\hatCnl| \qquad\text{and}\qquad |\Cnpplustwop|=|\hatCnp|.
\]
Let $\pr(\hatCnl)=\{\pr(T^*):T^*\in\hatCnl\}$ and $\pr(\hatCnp)=\{\pr(T^*):T^*\in\hatCnp\}$ denote the corresponding sets of Pr\"{u}fer codes. 
Since the map $T^*\mapsto\pr(T^*)$ is a bijection between rooted trees with vertex set $[n]$ and their Pr\"{u}fer codes, we have
\[
|\Cnlminusonep|=|\pr(\hatCnl)|
\qquad\text{and}\qquad
|\Cnpplustwop|=|\pr(\hatCnp)|.
\]
Let $A=\{2,3,\ldots,n-\ell+1\}$, $A'=A\setminus\{n-\ell+1\}$, and $B=\{2,3,\ldots,n-p-2\}$.

\medskip
\noindent\textbf{Enumeration of $\Tnlminusonep$.}
For every $T\in\Cnlminusonep$, we have $\Par_T(1)\in A$, so the 
first entry of $\pr(T^*)=(q_1(T^*),\ldots,q_{n-1}(T^*))$ satisfies 
$q_1(T^*)=\Par_T(1)\in A$. 
Let $\pi_1(\pr(T^*))=q_1(T^*)$ denote the first projection map, and define
\[
\varphi_1\colon\Cnlminusonep\longrightarrow A,
\quad \text{by} \quad
\varphi_1(T)=\pi_1(\pr(T^*))=q_1(T^*).
\]
Since $\Par_T(1)$ ranges over all of $A$, the map $\varphi_1$ is surjective, with fibers 
\[
\varphi_1^{-1}(a)=\{T\in\Cnlminusonep : \Par_T(1)=a\}.
\]
For any $a\in A$, let $T'$
be the tree obtained from $T$ by interchanging the labels $2$ and $a$.
Then $T\in\Cnlminusonep$ if and only if $T'\in\Cnlminusonep$.
Hence, the map $T\longmapsto T'$ is a bijection between the fibers $\varphi_1^{-1}(2)$ and $\varphi_1^{-1}(a)$.
Therefore, $|\varphi_1^{-1}(2)|=|\varphi_1^{-1}(a)|$, for any $a\in A$.
Hence, all fibers of $\varphi_1$ have equal cardinality.
Therefore, we have
\begin{equation}\label{eq:|Cnl-1p|}
|\Cnlminusonep|=\sum_{a\in A}|\varphi_1^{-1}(a)|
= \sum_{a\in A}|\varphi_1^{-1}(2)|
=(n-\ell)\,|\varphi_1^{-1}(2)|.
\end{equation}
Since we can express $\Tnlminusonep=\{T\in\Cnlminusonep : \Par_T(1)\in A'\}$, we have 
\begin{equation}\label{eq:|Tnl-1p|}
|\Tnlminusonep|=\sum_{a\in A'}|\varphi_1^{-1}(a)|
=(n-\ell-1)\,|\varphi_1^{-1}(2)|.
\end{equation}
By combining \eqref{eq:|Cnl-1p|} and \eqref{eq:|Tnl-1p|} we obtain
\[
|\Tnlminusonep|=\left(\frac{n-\ell-1}{n-\ell}\right)|\Cnlminusonep|.
\]

\medskip
\noindent\textbf{Enumeration of $\Tnpplustwop$.}
For every $T\in\Cnpplustwop$, we have $\Par_T(1)\in B$, so the 
same argument applies with the surjective map
\[
\psi_1\colon\Cnpplustwop\longrightarrow B,
\quad \text{given by} \quad \psi_1(T)=\pi_1(\pr(T^*))=q_1(T^*).
\]
Again, all fibers of $\psi_1$ have equal cardinality.
In particular, $|\psi_1^{-1}(2)| = |\psi_1^{-1}(b)|$, for any $b\in B$.
Thus,
\[
|\Cnpplustwop|
=\sum_{b\in B}|\psi_1^{-1}(b)|
= \sum_{b\in B}|\psi_1^{-1}(2)|
=(n-p-3)\,|\psi_1^{-1}(2)|.
\]
Since $\Tnpplustwop=\{T\in\Cnpplustwop : \Par_T(1)\in A'\}$, we obtain
\[
|\Tnpplustwop|
=\sum_{a\in A'}|\psi_1^{-1}(a)|
=(n-\ell-1)\,|\psi_1^{-1}(2)|.
\]
Therefore,
\[
|\Tnpplustwop|=\left(\frac{n-\ell-1}{n-p-3}\right)|\Cnpplustwop|.
\]
This completes the proof.
\end{proof}

Combining the results established so far, we can now determine the cardinality of the set $\Cnlp$.

\begin{theorem}\label{thm:Cnlp_count}
For $0\leq p\leq\ell-2$, we have
\[
|\Cnlp|=(\ell-p-1)(n-1)^{n-4-p}(n-2)^p(n-\ell-1).
\]
\end{theorem}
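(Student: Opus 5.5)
We will prove the formula by induction, using Proposition~\ref{prop:Tnl_bijection} as the base and Propositions~\ref{prop:Phi_p} and~\ref{prop:T_vs_C} to step down. Write $f(n,\ell,p)=(\ell-p-1)(n-1)^{n-4-p}(n-2)^p(n-\ell-1)$ for the claimed value of $|\Cnlp|$. The natural induction parameter is $d=\ell-p-2\ge 0$, the gap between $p$ and its maximal value $\ell-2$, with $n$ and $p$ free. The two auxiliary sets appearing in Proposition~\ref{prop:Phi_p} have parameters $(\ell-1,p)$ and $(p+2,p)$. Their gaps are $d-1$ and $0$. So both are covered, either by the induction hypothesis or by the base case.

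For the base case $d=0$, i.e.\ $p=\ell-2$, Proposition~\ref{prop:Tnl_bijection} gives $|\Cnllminustwo|=(n-1)^{n-\ell-2}(n-2)^{\ell-2}(n-\ell-1)$. This agrees with $f(n,\ell,\ell-2)$, since $\ell-p-1=1$ and $n-4-p=n-\ell-2$. The case $\ell=2$ is included, and it also covers the sets $\mathcal{C}_{n,p+2}^{p}$ for every $p$, because there $\ell'=p+2$ is exactly the maximal case. We must check that the hypotheses $n\ge \ell'+2$ and $\ell'\ge 2$ hold for these auxiliary families. Both are immediate, since $p+2\le \ell$ and $\ell-1\ge p+2\ge 2$ whenever $p<\ell-2$.

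For the inductive step, take $0\le p<\ell-2$. Propositions~\ref{prop:Phi_p} and~\ref{prop:T_vs_C} give
\[
|\Cnlp|=\frac{n-\ell-1}{n-\ell}\,|\Cnlminusonep|+\frac{n-\ell-1}{n-p-3}\,|\Cnpplustwop|.
\]
By induction, $|\Cnlminusonep|=(\ell-p-2)(n-1)^{n-4-p}(n-2)^p(n-\ell)$. The factor $n-\ell$ cancels, and the first term becomes $(\ell-p-2)(n-1)^{n-4-p}(n-2)^p(n-\ell-1)$. By the base case with $\ell$ replaced by $p+2$, $|\Cnpplustwop|=(n-1)^{n-p-4}(n-2)^p(n-p-3)$. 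The factor $n-p-3$ cancels, and the second term becomes $(n-1)^{n-p-4}(n-2)^p(n-\ell-1)$. Adding the two terms gives the coefficient $(\ell-p-2)+1=\ell-p-1$, which is exactly $f(n,\ell,p)$.

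The only real subtlety is a bookkeeping one. The sets $\Cnlminusonep$ and $\Cnpplustwop$ are defined relative to the graphs $G_{\ell-1}$ and $G_{p+2}$ on the same vertex set $[n]$, with $F_{\ell-1}$ and $F_{p+2}$ respectively. So the induction must be stated for all admissible $(\ell,p)$ simultaneously at fixed $n$, with $\ell$ varying over $2\le\ell\le n-2$. We also need to confirm that the root $n-p$ lies in the required range for each family. For $G_{\ell-1}$ this requires $p\le(\ell-1)-2$, which holds. For $G_{p+2}$ it requires $p\le p$, which holds. Once this is set up, the algebra above is routine. The substantive combinatorial content was already established in the bijections of Propositions~\ref{prop:Tnl_bijection} and~\ref{prop:Phi_p}.
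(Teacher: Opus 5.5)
Your proposal is correct and matches the paper's proof. The paper also inducts on $\ell$ for fixed $p$, which is the same as your induction on $d=\ell-p-2$. It uses Proposition~\ref{prop:Tnl_bijection} as the base case $\ell=p+2$ and combines Propositions~\ref{prop:Phi_p} and~\ref{prop:T_vs_C} with the same cancellations of $n-\ell$ and $n-p-3$. Your remark that $|\mathcal{C}_{n,p+2}^{p}|$ comes directly from the base case, where the paper loosely cites the induction hypothesis, is a minor clarification rather than a different route.
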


\begin{proof}
We prove the theorem by induction on $\ell$, for fixed $p$, where $\ell\geq p+2$. 
The base case is $\ell=p+2$, equivalently $p=\ell-2$.
By Proposition~\ref{prop:Tnl_bijection}, we have $|\Cnllminustwo|=(n-1)^{n-\ell-2}(n-2)^{\ell-2}(n-\ell-1)$, which 
agrees with the required formula upon substituting $p=\ell-2$.

Now we assume that $\ell>p+2$, and suppose the theorem holds for all $\mathcal{C}_{n,\ell'}^p$ with $\ell'<\ell$. 
Thus by the induction hypothesis, we have 
\[
|\Cnlminusonep|=(\ell-p-2)~(n-1)^{n-4-p}~(n-2)^p~(n-\ell).
\]
Moreover, since $p+2<\ell$, by the induction hypothesis we have
\[
|\Cnpplustwop|=(n-1)^{n-4-p}(n-2)^p(n-p-3).
\]
By Proposition~\ref{prop:T_vs_C}, we obtain
\[
|\Tnlminusonep|
=|\Cnlminusonep|\left(\frac{n-\ell-1}{n-\ell}\right)
=(\ell-p-2)(n-1)^{n-4-p}(n-2)^p(n-\ell-1),
\]
and 
\[
|\TildeTnpplustwop|
=|\Tnpplustwop|
=|\Cnpplustwop|\left(\frac{n-\ell-1}{n-p-3}\right)
=(n-1)^{n-4-p}(n-2)^p(n-\ell-1).
\]
Finally, by Proposition~\ref{prop:Phi_p}, the map $\Phi_p:\Tnlminusonep\coprod\TildeTnpplustwop\longrightarrow\Cnlp$ is a bijection, and therefore
\[
|\Cnlp|=|\Tnlminusonep|+|\TildeTnpplustwop|=(\ell-p-1)(n-1)^{n-4-p}(n-2)^p(n-\ell-1).
\]
This completes the induction and hence the proof.
\end{proof}

Since $\Cnl=\coprod_{p=0}^{\ell-2}\Cnlp$, the following corollary is immediate from Theorem~\ref{thm:Cnlp_count}.

\begin{corollary}\label{cor:Cnl_count}
We have
\[
|\Cnl|
=
\sum_{p=0}^{\ell-2}
(\ell-p-1)
(n-1)^{\,n-4-p}
(n-2)^p
(n-\ell-1).
\]
\end{corollary}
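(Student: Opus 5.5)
The corollary follows by summing Theorem~\ref{thm:Cnlp_count} over $p$. Recall that $\Cnl$ was defined as the set of $T\in\UnFl$ with $\root(T)\in F_{\ell}\setminus\{n-\ell+1\}$ together with the condition $\mathcal{P}_T(k)\geq 1$ for $2\leq k\leq n-\ell$. Also recall that $\Cnlp$ was defined as the subset of trees with root $n-p$.

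First we check that the decomposition is a genuine partition. The set $F_{\ell}\setminus\{n-\ell+1\}=\{n-\ell+2,\ldots,n\}$ is exactly $\{n-p : 0\leq p\leq \ell-2\}$. The root of a tree is unique, so the sets $\Cnlp$ for distinct $p$ are pairwise disjoint. Their union over $0\leq p\leq \ell-2$ is all of $\Cnl$. Hence
\[
|\Cnl|=\sum_{p=0}^{\ell-2}|\Cnlp|.
\]

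Next we substitute, for each $p$ in this range, the value
\[
|\Cnlp|=(\ell-p-1)(n-1)^{n-4-p}(n-2)^p(n-\ell-1)
\]
given by Theorem~\ref{thm:Cnlp_count}, which is valid for all $0\leq p\leq\ell-2$. Its base case $p=\ell-2$ comes from Proposition~\ref{prop:Tnl_bijection}, and the inductive cases use Propositions~\ref{prop:Phi_p} and~\ref{prop:T_vs_C}. This substitution yields the displayed sum immediately.

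There is no real obstacle here. The only point to check is that the standing hypotheses $\ell\geq 2$ and $n\geq \ell+2$ of Section~\ref{sec:Counting SPF(Gl)} hold, so that the index range is nonempty and Theorem~\ref{thm:Cnlp_count} applies to every term. For $\ell=1$ the sum is empty and gives $0$, consistent with $\mathcal{C}_{n,1}=\emptyset$.

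As a sanity check on the formula, take $\ell=2$. The sum reduces to the single term $(n-1)^{n-4}(n-3)$, which agrees with Proposition~\ref{prop:Tnl_bijection} at $\ell=2$. We do not attempt to put the sum into closed form here; that presumably happens afterwards, when $|\Cnl|$ is subtracted from $|\UnFl|$ to obtain $|\SPF(\Gl)|$.
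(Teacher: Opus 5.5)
Your proposal is correct and matches the paper's argument. The paper also derives the corollary immediately from the disjoint decomposition $\mathcal{C}_{n,\ell}=\coprod_{p=0}^{\ell-2}\mathcal{C}_{n,\ell}^{p}$ by root $n-p$, substituting the count from Theorem~\ref{thm:Cnlp_count} for each $p$. Your $\ell=2$ consistency check against Proposition~\ref{prop:Tnl_bijection} is sound but not needed.
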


We have already shown in Theorem~\ref{thm:|Un|} that $|\UnFl|=(n-1)^{n-\ell-2}(n-2)^{\ell}(n-\ell-1)$.
Moreover, by Theorems~\ref{thm:SPFGl_to_UnFl} and~\ref{thm:Im(Psi)}, we have
\[
|\Im(\phi_{\Gl})|= |\SPF(\Gl)|=|\UnFl\setminus\Cnl|.
\]
One of the interesting applications of the preceding computations is the following explicit enumeration of spherical $\Gl$-parking functions.

\begin{theorem}\label{thm:|SPF|}
For $n\geq3$ and $2\leq\ell\leq n-2$,
\[
|\SPF(\Gl)|=(n-1)^{n-3}(n-\ell-1)^2.
\]
\end{theorem}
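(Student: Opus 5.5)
The plan is to combine the results already established. By Theorems~\ref{thm:SPFGl_to_UnFl} and~\ref{thm:Im(Psi)}, we have $|\SPF(\Gl)|=|\UnFl|-|\Cnl|$, since $\Cnl\subseteq\UnFl$. Theorem~\ref{thm:|Un|} gives $|\UnFl|=(n-1)^{n-\ell-2}(n-2)^{\ell}(n-\ell-1)$, and Corollary~\ref{cor:Cnl_count} gives $|\Cnl|$ as a finite sum. So the whole proof reduces to evaluating that sum in closed form and simplifying the difference.

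First factor out $(n-\ell-1)(n-1)^{n-4}$ from the sum in Corollary~\ref{cor:Cnl_count}. Writing $x=\frac{n-2}{n-1}$, this gives
\[
|\Cnl|=(n-\ell-1)(n-1)^{n-4}\sum_{p=0}^{\ell-2}(\ell-1-p)\,x^{p}.
\]
I would evaluate $S=\sum_{p=0}^{m}(m+1-p)x^p$ with $m=\ell-2$ via the standard identity
\[
S=\frac{(m+1)-(m+2)x+x^{m+2}}{(1-x)^2}.
\]
This can be checked by multiplying by $(1-x)^2$, or proved by a short induction on $m$ (equivalently on $\ell$), in the same spirit as Proposition~\ref{prop:identity001}. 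Since $1-x=\frac{1}{n-1}$, this becomes
\[
S=(n-1)^2\Bigl((\ell-1)-\ell\,\tfrac{n-2}{n-1}+\bigl(\tfrac{n-2}{n-1}\bigr)^{\ell}\Bigr).
\]
Multiplying back by $(n-1)^{n-4}(n-\ell-1)$, the last term produces exactly $(n-1)^{n-\ell-2}(n-2)^{\ell}(n-\ell-1)=|\UnFl|$. Hence
\[
|\UnFl|-|\Cnl|=(n-\ell-1)(n-1)^{n-3}\bigl(\ell(n-2)-(\ell-1)(n-1)\bigr).
\]

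The bracket simplifies to $\ell(n-2)-(\ell-1)(n-1)=n-\ell-1$, which yields $(n-1)^{n-3}(n-\ell-1)^2$, as claimed. The hypotheses $\ell\ge2$ and $n\ge\ell+2$ are exactly those under which Corollary~\ref{cor:Cnl_count} and Theorem~\ref{thm:|Un|} apply.

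The only step needing care is the weighted geometric sum. There the thing to watch is the bookkeeping of exponents: $(n-1)^{n-4-p}=(n-1)^{n-4}(n-1)^{-p}$, and the top term $x^{\ell}$ times $(n-1)^{n-2}$ must equal $(n-1)^{n-\ell-2}(n-2)^\ell$. If the closed form is awkward, a fallback is to prove $|\UnFl|-|\Cnl|=(n-1)^{n-3}(n-\ell-1)^2$ directly by induction on $\ell$. Passing from $\ell$ to $\ell+1$ changes the sum in a telescoping way, and the identity can then be verified algebraically.
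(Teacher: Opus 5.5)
Your proposal is correct and follows the paper's proof: you get $|\SPF(\Gl)|=|\Im(\phi_{\Gl})|=|\UnFl|-|\Cnl|$ from Theorems~\ref{thm:SPFGl_to_UnFl} and~\ref{thm:Im(Psi)}, then use Theorem~\ref{thm:|Un|}, Corollary~\ref{cor:Cnl_count} and a purely algebraic identity (the paper's Lemma~\ref{lem:key_identity}). The only difference is in the algebra: you evaluate the sum through the closed form of $\sum_{p=0}^{m}(m+1-p)x^{p}$ at $x=\frac{n-2}{n-1}$ rather than expanding $(n-1)^{j}-(n-2)^{j}$ and swapping sums, and your computation is valid for all $n\geq \ell+2$, so it also covers $n=\ell+2$, which the stated hypothesis of Lemma~\ref{lem:key_identity} formally leaves out.
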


We first establish the following identity.

\begin{lemma}\label{lem:key_identity}
For $n\geq\ell+3$ and $\ell\geq 2$, we have
\[
(n-1)^{n-\ell-2}(n-2)^{\ell}(n-\ell-1)
-
(n-1)^{n-3}(n-\ell-1)^2
=
\sum_{p=0}^{\ell-2}
(\ell-p-1)(n-1)^{n-4-p}(n-2)^p(n-\ell-1).
\]
\end{lemma}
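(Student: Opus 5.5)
The plan is to reduce the lemma to a polynomial identity in one variable and then prove that identity by a telescoping argument. Nothing combinatorial is needed.

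\textbf{Step 1: reduction.} Set $x=n-1$ and $y=n-2=x-1$, so that $x-y=1$ and $n-\ell-1=x-\ell$. Every term of the identity carries the common factor $(n-\ell-1)\,x^{n-\ell-2}$, since $n-3\geq n-\ell-2$ and $n-4-p\geq n-\ell-2$ for $p\leq \ell-2$. Dividing by this factor, which is legitimate because $n\geq \ell+3$ makes it nonzero, the lemma becomes
\[
y^{\ell}-x^{\ell-1}(x-\ell)=\sum_{p=0}^{\ell-2}(\ell-1-p)\,x^{\ell-2-p}y^{p}.
\]
Equivalently, it suffices to show
\[
\sum_{p=0}^{\ell-2}(\ell-1-p)\,x^{\ell-2-p}y^{p}=\ell x^{\ell-1}-x^{\ell}+y^{\ell}
\]
whenever $x-y=1$ and $\ell\geq 2$.

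\textbf{Step 2: rewriting the left-hand side.} Use the factorization $x^{k}-y^{k}=(x-y)\sum_{j=0}^{k-1}x^{k-1-j}y^{j}=\sum_{j=0}^{k-1}x^{k-1-j}y^{j}$, valid because $x-y=1$. Then
\[
\sum_{k=1}^{\ell-1}x^{\ell-1-k}\bigl(x^{k}-y^{k}\bigr)=\sum_{k=1}^{\ell-1}\sum_{j=0}^{k-1}x^{\ell-2-j}y^{j}.
\]
Interchanging the order of summation, the coefficient of $x^{\ell-2-j}y^{j}$ counts the indices $k$ with $j<k\leq \ell-1$, which is exactly $\ell-1-j$. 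So this double sum equals the left-hand side of the target identity.

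\textbf{Step 3: evaluating the sum.} Expanding the same expression term by term gives $(\ell-1)x^{\ell-1}-\sum_{k=1}^{\ell-1}x^{\ell-1-k}y^{k}$. Applying the factorization from Step 2 once more with $k=\ell$ gives
\[
\sum_{k=1}^{\ell-1}x^{\ell-1-k}y^{k}=\bigl(x^{\ell}-y^{\ell}\bigr)-x^{\ell-1}.
\]
Substituting, the sum equals $(\ell-1)x^{\ell-1}-x^{\ell}+y^{\ell}+x^{\ell-1}=\ell x^{\ell-1}-x^{\ell}+y^{\ell}$. This is the required right-hand side, and multiplying back by $(n-\ell-1)\,x^{n-\ell-2}$ recovers the lemma.

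The only real obstacle is the bookkeeping in Step 1, namely checking that the exponents leave a common factor $x^{n-\ell-2}$ and that $n-\ell-1$ becomes $x-\ell$. As a fallback, one can prove the reduced identity by induction on $\ell$, in the style of Proposition~\ref{prop:identity001}. Going from $\ell$ to $\ell+1$ multiplies the existing sum by $x$ and adds $\sum_{p=0}^{\ell-1}x^{\ell-1-p}y^{p}=x^{\ell}-y^{\ell}$. The right-hand side changes by exactly the same amount, using $y^{\ell+1}=xy^{\ell}-y^{\ell}$.
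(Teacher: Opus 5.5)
Your proof is correct and is essentially the paper's argument: after factoring out $(n-1)^{n-\ell-2}(n-\ell-1)$, both pass through the same intermediate expression $\sum_{k=1}^{\ell-1}(n-1)^{\ell-1-k}\bigl[(n-1)^k-(n-2)^k\bigr]$. Both then expand each difference as a geometric sum using $(n-1)-(n-2)=1$ and swap the order of summation to produce the coefficients $\ell-1-p$. The only differences are cosmetic: you run the chain from the sum back to the bracket rather than forward, and your remark about dividing is unnecessary, since you only ever multiply the reduced identity back by the common factor.
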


\begin{proof}
The identity is established by a direct computation. Starting from 
the left-hand side, we compute
\begin{equation*}
\begin{split}
&\hspace{-0.7cm}(n-1)^{n-\ell-2}(n-2)^{\ell}(n-\ell-1)
-(n-1)^{n-3}(n-\ell-1)^2\\
&\hspace{0.5cm} =(n-1)^{n-\ell-2}(n-\ell-1)
\Bigl[(n-2)^{\ell}-(n-1)^{\ell-1}(n-\ell-1)\Bigr]\\
& \hspace{0.5cm} = (n-1)^{n-\ell-2}(n-\ell-1)
\Bigl[(n-2)^{\ell}-(n-1)^{\ell}+\ell(n-1)^{\ell-1}\Bigr]\\
& \hspace{0.5cm} =(n-1)^{n-\ell-2}(n-\ell-1)\sum_{j=0}^{\ell-1}\Bigl[(n-1)^{\ell-1} - (n-2)^j(n-1)^{\ell-1-j}\Bigr]\\
& \hspace{0.5cm} =(n-1)^{n-\ell-2}(n-\ell-1)
\sum_{j=1}^{\ell-1}
(n-1)^{\ell-1-j}\Bigl[ (n-1)^j-(n-2)^j \Bigr]\\
& \hspace{0.5cm} =(n-1)^{n-\ell-2}(n-\ell-1)
\sum_{j=1}^{\ell-1}
(n-1)^{\ell-1-j}\Biggl[ \sum_{p=0}^{j-1}(n-1)^{j-1-p}(n-2)^p \Biggr]\\
& \hspace{0.5cm}  =(n-1)^{n-\ell-2}(n-\ell-1)
\Biggl(\sum_{p=0}^{\ell-2}\sum_{j=p+1}^{\ell-1}
(n-1)^{\ell-2-p}(n-2)^p\Biggr)\\
& \hspace{0.5cm}=(n-1)^{n-\ell-2}(n-\ell-1)
\Biggl(\sum_{p=0}^{\ell-2}(\ell-p-1)(n-1)^{\ell-2-p}(n-2)^p\Biggr)\\
& \hspace{0.5cm} =\sum_{p=0}^{\ell-2}(\ell-p-1)(n-1)^{n-4-p}(n-2)^p(n-\ell-1).
\end{split}
\end{equation*}
This proves the lemma.
\end{proof}

Combining Theorem~\ref{thm:|Un|}, Corollary~\ref{cor:Cnl_count}, and
Lemma~\ref{lem:key_identity}, we obtain the following corollary.

\begin{corollary}\label{cor:image_count} We have 
\begin{equation*}\label{eq:image_count}
\begin{split}
|\Im(\phi_{\Gl})| & = |\UnFl|-|\Cnl| \\
&  =  (n-1)^{\,n-\ell-2}(n-2)^{\,\ell}(n-\ell-1) - \sum_{p=0}^{\ell-2} (\ell-p-1)(n-1)^{\,n-4-p}(n-2)^p(n-\ell-1)\\
&  =  (n-1)^{n-3}(n-\ell-1)^2.
\end{split}
\end{equation*}
\end{corollary}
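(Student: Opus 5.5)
The statement is Corollary~\ref{cor:image_count}, and it is a direct assembly of results already established; nothing new has to be proved.

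The first equality comes from Theorem~\ref{thm:Im(Psi)}. There we have $\Im(\phi_{\Gl})=\UnFl\setminus\Cnl$, and $\Cnl\subseteq\UnFl$ holds by the definition of $\Cnl$. Hence $|\Im(\phi_{\Gl})|=|\UnFl|-|\Cnl|$.

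For the second equality we substitute the two known counts:
\begin{itemize}
\item Theorem~\ref{thm:|Un|} gives $|\UnFl|=(n-1)^{n-\ell-2}(n-2)^{\ell}(n-\ell-1)$.
\item Corollary~\ref{cor:Cnl_count} gives $|\Cnl|$ as the stated sum. That corollary follows from the disjoint decomposition $\Cnl=\coprod_{p=0}^{\ell-2}\Cnlp$ together with Theorem~\ref{thm:Cnlp_count}.
\end{itemize}

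The third equality is exactly Lemma~\ref{lem:key_identity} after rearrangement. The lemma says $|\UnFl|-(n-1)^{n-3}(n-\ell-1)^2$ equals the sum, so subtracting the sum from $|\UnFl|$ leaves $(n-1)^{n-3}(n-\ell-1)^2$.

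The only point needing care is the range of the parameters. Lemma~\ref{lem:key_identity} is stated for $n\ge \ell+3$, but the corollary is meant to hold for $2\le \ell\le n-2$. So the boundary case $n=\ell+2$ must be checked separately. The lemma's computation never actually uses $n\ge\ell+3$: it is a polynomial identity obtained from telescoping and the geometric factorization $(n-1)^j-(n-2)^j=\sum_{p=0}^{j-1}(n-1)^{j-1-p}(n-2)^p$. That manipulation is valid for every $n$ with the exponents nonnegative, and here $n-\ell-2\ge 0$ and $n-4-p\ge n-\ell-2\ge 0$.

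As a sanity check, take $n=\ell+2$ directly. Then $|\UnFl|=(n-2)^{n-2}$, and the formula gives $(n-1)^{n-3}$. This agrees with the identity $(n-2)^{\ell}-(n-1)^{\ell-1}=\sum_{p}(\ell-p-1)(n-1)^{\ell-2-p}(n-2)^p$, which the telescoping derives.

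Combining this with the injectivity of $\phi_{\Gl}$ from Theorem~\ref{thm:SPFGl_to_UnFl} yields Theorem~\ref{thm:|SPF|}.
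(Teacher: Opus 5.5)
Your proposal is correct and follows the paper's own argument: the corollary is exactly the combination of Theorem~\ref{thm:Im(Psi)} with Theorem~\ref{thm:|Un|}, Corollary~\ref{cor:Cnl_count}, and Lemma~\ref{lem:key_identity}. Your extra remark is also valid and useful: Lemma~\ref{lem:key_identity} is stated only for $n\geq \ell+3$, but its derivation is a polynomial identity that needs only the exponents $n-\ell-2$ and $n-4-p$ to be nonnegative, so it also covers the boundary case $n=\ell+2$ required by Theorem~\ref{thm:|SPF|} — a range mismatch the paper leaves implicit.
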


\begin{remark}
Lemma~\ref{lem:key_identity} admits a natural combinatorial  interpretation. 
The identity reflects the decomposition
\[
\UnFl=\Bigl(\Im(\phi_{\Gl})\Bigr) \coprod \left(\coprod_{p=0}^{\ell-2}\Cnlp\right),
\]
that is, every tree in $\UnFl$ belongs either to $\Im(\phi_{\Gl})$ or to exactly one of the subsets $\Cnlp$. 
Rearranging the identity of Lemma~\ref{lem:key_identity}, we also obtain a decomposition of the elements of $\Im(\phi_{\Gl})$ according to the choice of root in the corresponding trees.
\end{remark}

We are now in a position to prove Theorem~\ref{thm:|SPF|}.

\begin{proof}[Proof of Theorem~\ref{thm:|SPF|}]
By Theorem~\ref{thm:SPFGl_to_UnFl}, the spherical $\Gl$-parking 
functions are in bijection with $\Im(\phi_{\Gl})$. Hence, by 
Theorem~\ref{thm:Im(Psi)} and Corollary~\ref{cor:image_count}, 
we obtain the stated formula.
\end{proof}

\begin{remark}
An \emph{$\ell$-claw} is a graph with $\ell+1$ vertices consisting  of a central vertex adjacent to $\ell$ other vertices, with no edges among the outer vertices. 
Equivalently, it is the complete bipartite graph $K_{1,\ell}$. 
The graph $\Gl$ is obtained from $\Knn$ by deleting an $\ell$-claw at vertex $1$, that is, by removing the $\ell$ edges joining vertex $1$ to the vertices of $F_{\ell}$.
More generally, let $\widetilde{G}_{\ell}$ be a graph with vertex set $[n]\cup\{0\}$ and root $0$, obtained from $\Knn$ by deleting an $\ell$-claw at some vertex $i\in[n]\setminus\{1\}$, 
that is, by removing the $\ell$ edges joining $i$ to distinct 
vertices $j_1,\ldots,j_{\ell}\in[n]\setminus\{i\}$. 
By relabeling the vertices, we may assume $i=1$ and 
$\{j_1,\ldots,j_{\ell}\}=F_{\ell}$, so $\widetilde{G}_{\ell}$ is isomorphic to $\Gl$. 
Hence their skeleton ideals are isomorphic, and in particular their $(n-2)$-skeleton ideals have the same number of standard monomials. Therefore,
\[
|\SPF(\widetilde{G}_{\ell})|=|\SPF(\Gl)|.
\]
Thus, Theorem~\ref{thm:|SPF|} computes the number of spherical 
$G$-parking functions for every graph $G$ obtained from $\Knn$ by 
deleting an $\ell$-claw.
\end{remark}

\begin{remark}
Although the previous remark shows that every graph obtained from $\Knn$ by deleting an $\ell$-claw has the same number of spherical $G$-parking functions, a combinatorial description of spherical parking functions in terms of uprooted spanning  is not known in general.
For such a graph $\widetilde{G}_{\ell}$, the image  of the map $\phi_{\widetilde{G}_{\ell}}: \SPF(\widetilde{G}_{\ell}) \longrightarrow \U(\widetilde{G}_{\ell}')$,  where $\widetilde{G}_{\ell}' = \widetilde{G}_{\ell}-\{0\}$,  need not admit a characterization analogous to that obtained for $\Gl$,
and the associated uprooted spanning trees may be considerably more difficult to describe.
For example, let us consider $\widetilde{G}=\Knn -\{e_{n-1,n}\}$.
For $n\geq 3$, the map $\phi_{\widetilde{G}}:\SPF(\widetilde{G})\longrightarrow \mathcal U(\widetilde{G}-\{0\})$ is injective but not surjective. 
In particular, when $n=4$, $|\SPF(K_{5}-\{e_{3,4}\})|=12$, whereas the number of uprooted trees with vertex set $[4]$ having no edge between $3$ and $4$ is $17$.
\end{remark}

The results obtained in this paper suggest several natural directions for further investigation. 
While we have obtained explicit enumeration formulas and combinatorial descriptions for spherical $\Gl$-parking functions and uprooted spanning trees, many questions remain open.
We conclude with some problems and directions for future research.

\noindent\textbf{Question 1.}
\emph{The formula for $|\UnFl|$ in Theorem~\ref{thm:|Un|} is given by 
the simple product
\[
|\UnFl|=(n-1)^{n-\ell-2}(n-2)^{\ell}(n-\ell-1).
\]
Does there exist a direct combinatorial proof of this formula, 
perhaps via a Pr\"{u}fer-type encoding or a bijection analogous 
to that of Chauve, Dulucq, and Guibert~\cite{CDG}?}

\noindent\textbf{Question 2.}
\emph{Determining $|\SPF(G)|$ for an arbitrary graph $G$ appears to be difficult, and explicit formulas are known only in a few cases. 
Can such formulas be obtained for other natural families of graphs, such as complete bipartite graphs? 
In this direction, Kumar, Lather, and Sonica~\cite{CGS} established that for $m\geq1$,
\[
|\SPF(K_{m+1,2})|=(m-1)2^{m} + 1.
\]
It would be interesting to determine $|\SPF(K_{m,n})|$ for general 
$m$ and $n$.}